\newtheorem{thm}{Theorem}[section]
\newtheorem{lemma}[thm]{Lemma} \newtheorem{cor}[thm]{Corollary}
\newtheorem{prop}[thm]{Proposition}
\theoremstyle{definition}
\newtheorem{defn}[thm]{Definition}
\newtheorem{question}[thm]{Question}
\newtheorem{example}[thm]{Example}
\newtheorem{remark}[thm]{Remark}
\newcommand{\A}{\mathbb{A}}
\newcommand{\Z}{\mathbb Z}
\newcommand{\N}{\mathbb N}
\newcommand{\R}{\mathbb R}
\newcommand{\Q}{\mathbb Q}
\newcommand{\shO}{\mathcal{O}}
\newcommand{\shL}{\mathcal{L}}
\newcommand{\PP}{\mathbb{P}}
\newcommand{\G}{\mathbb{G}}
\newcommand{\GITQ}{/ \! /}
\newcommand{\mc}{\mathcal}
\DeclareMathOperator{\diag}{diag}
\DeclareMathOperator{\GL}{GL}
\DeclareMathOperator{\SL}{SL}
\DeclareMathOperator{\Mat}{Mat}
\DeclareMathOperator{\Corner}{Hull_+}
\DeclareMathOperator{\Conv}{Conv}
\DeclareMathOperator{\Span}{Span}
\newcommand{\PE}{\PP(\LEnd_{N+1})}
\DeclareMathOperator{\Prof}{Prof}
\newcommand*\subtxt[1]{_{\textnormal{#1}}}
\newcommand*\suptxt[1]{^{\textnormal{#1}}}
\newcommand{\Vstable}{V\suptxt{s}}
\newcommand{\Vsemistable}{V\suptxt{ss}}
\newcommand{\Vunstable}{V\suptxt{us}}
\newcommand{\gits}{\suptxt{s}}
\newcommand{\gitss}{\suptxt{ss}}
\newcommand{\pathd}{\bm{\downarrow}}
\newcommand{\pathr}{\bm{\rightarrow}}
\newcommand{\Hess}{h}
\DeclareMathOperator{\Verts}{Vert}
\newcommand{\VertPlusI}{{\Verts}_+^I}
\DeclareMathOperator{\Ctrl}{Vert_+}
\DeclareMathOperator{\wt}{wt}
\newcommand{\Gm}{\G\subtxt{m}}
\newcommand{\Twip}{\mathcal{X}_{N,n}}
\newcommand{\Twipq}{\mathcal{X}_{N,n} \GITQ \SL_{N+1}}
\newcommand{\LEnd}{\Mat}
\newcommand{\piv}{\multicolumn{1}{|c}{\bm{*}}}
\newcommand{\stairzerobar}{\multicolumn{1}{c|}{0}}
\newcommand{\stairbarast}{\multicolumn{1}{|c}{*}}
\newcommand{\stairbarzero}{\multicolumn{1}{|c}{0}}
\newcommand{\stairbarvdots}{\multicolumn{1}{|c}{\vdots}}
\newcommand{\stairvdotsbar}{\multicolumn{1}{c|}{\vdots}}
\newcommand{\barwt}{\overline{\wt}}
\newcommand{\Orth}{O^+}
\newcommand{\OrthInt}{(O^+)^\circ}
\title[GIT of linear maps with marked points]{GIT stability of linear maps on projective space with marked points}
\date{\today}
\author{Max Weinreich}
\address{Department of Mathematics\\
Brown University\\
Providence, RI 02906}
\email{maxhweinreich@gmail.com}
\keywords{geometric invariant theory, dynamical moduli spaces, Hessenberg functions, root polytopes}
\subjclass[2020]{Primary: 37P45; Secondary: 14L24, 52B05}
\thanks{The author was supported by a National Science Foundation Graduate Research Fellowship under Grant No. 2040433.}
\begin{document}

\maketitle

\begin{abstract}
    We construct moduli spaces of linear self-maps of $\PP^N$ with marked points, up to projective equivalence. That is, we let $\SL_{N+1}$ act simultaneously by conjugation on projective linear maps and diagonally on $(\PP^N)^n$, and we take the geometric invariant theory (GIT) quotient. These moduli spaces arise in algebraic dynamics and integrable systems. Our main result is a dynamical characterization of the GIT semistable and stable loci in the space of linear maps $T$ with marked points. We show that GIT stability can be checked by counting the marked points on flags with certain Hessenberg functions relative to $T$. The proof is combinatorial: to describe the weight polytopes for this action, we compute the vertices and facets of certain convex polyhedra generated by roots of the $A_N$ lattice.
\end{abstract}

\section{Introduction} \label{sect_intro_chapgit}

In this paper, we use geometric invariant theory (GIT) to construct moduli spaces $\mc{M}^N_{1,n}$ of linear self-maps of $\PP^N$ with $n$ marked points, considered up to projective equivalence.

\begin{defn}
Let $N, n \in \N.$ A \emph{linear map on $\PP^N$ with $n$ marked points}, or \emph{marked linear map}, is a tuple $(T, v_1, \hdots, v_n),$ where $T \colon \PP^N \dashrightarrow \PP^N$ is the projectivization of a linear map and $v_1, \hdots, v_n \in \PP^N$. 
\end{defn}

Marked linear maps arise in algebraic dynamics, combinatorial algebraic geometry, and integrable systems; see Sections \ref{sect_alg_dyn}-\ref{sect_integrable_systems} for these connections. Our primary motivation is the problem of understanding moduli of algebraic dynamical systems \cite{MR4007163, MR4366860, MR4132597, MR2741188, MR2884382}. Doyle-Silverman constructed moduli spaces $\mc{M}_{d,n}^N$ of degree $d \geq 2$ dynamical systems on $\PP^N$ with $n$ marked points \cite{MR4132597}. These moduli spaces are used to study dynamical systems with marked periodic orbits, but little is known about their structure. We construct these moduli spaces in the linear case $d = 1$. 

We find that, although linear maps are the simplest algebraic dynamical systems, marking points leads to a remarkably subtle moduli space. We show that $\mc{M}^N_{1,n}$ exists, is rational, and admits a dynamically meaningful compactification $\bar{\mc{M}}^N_{1,n}$ with a delicate combinatorial structure. In this way, our moduli problem is loosely analogous to the widely-studied moduli space of curves of genus $0$ with $n$ marked points.

We work over a fixed algebraically closed field $k$. Fix integers $N, n \in \N$. Let $\LEnd_{N+1}$ be the space of linear endomorphisms of the vector space $k^{N+1}$, and let $\PP(\LEnd_{N+1}) = \PP^{N^2 + 2 N}$ be its projectivization. In coordinates, the elements of $\PP(\LEnd_{N+1})$ are $(N+1) \times (N+1)$ nonzero matrices, up to scale. We consider the variety
$$\Twip \colonequals \PP(\LEnd_{N+1}) \times (\PP^N)^n,$$
equipped with the following action of $\SL_{N+1}$: for each $A \in \SL_{N+1}$,
\begin{equation} \label{eq_acn}
    A \cdot (T,v_1, \hdots, v_n) \colonequals (ATA^{-1}, Av_1, \hdots, Av_n).
\end{equation}

The GIT quotient of $\Twip$ by the action \eqref{eq_acn} is defined relative to a choice of very ample, $\SL_{N+1}$-linearized invertible sheaf $\shL$. Note that any very ample invertible sheaf $\shL$ has a unique $\SL_{N+1}$-linearization. We write $(\Twipq)(\shL)$ for the GIT quotient of $\Twip$ by the action \eqref{eq_acn} relative to $\shL$.

Our main objects of study are the GIT quotients for various sheaves $\shL$,
$$\mc{M}_{1, n}^N(\shL) \colonequals (\Twipq)(\shL).$$

The price of working with GIT is that some geometric objects are so degenerate that they cannot be represented in a quotient variety. Points of $\Twip$ are classified as \emph{unstable}, \emph{semistable}, or \emph{stable} relative to the sheaf $\shL$. Stable points have the desirable property that their orbits are in bijection with the points of a geometric quotient variety. The semistable points are certain degenerations of families of stable points, and they can be used to construct a natural compactification of the stable quotient.

Generally, a geometric object in $\PP^N$ is GIT unstable if it has too high an order of contact with some flag in $\PP^N$. The classic example is due to Mumford \cite{MR1304906}. A configuration $(v_1, \hdots, v_n) \in (\PP^N)^n$ of $n$ points in $\PP^N$ is semistable relative to the diagonal action of $\SL_{N+1}$ and the sheaf $\shO(1, \hdots, 1)$ if, for each nontrivial proper subspace $H \subsetneq k^{N+1}$,
\begin{equation} \label{eq_mumford_template}
\# \{i  \colon v_i \in H \} \leq \frac{n}{N+1}(\dim H).
\end{equation}

Our main theorem characterizes the GIT stable and semistable points of $\Twip$, in the spirit of Mumford's example. We show that the GIT stability of a point $(T, v_1, \hdots, v_n) \in \Twip$ can be determined by counting the points $v_i$ lying on certain flags that play a special dynamical role relative to $T$. To state the theorem, we define an invariant that encapsulates the dynamics of $T$ on a given flag.

\begin{defn}
\label{defn:flaginkN1}
A \emph{flag~$\mc{H}$ in $k^{N+1}$} is a nested sequence of linear subspaces
\[
\mc{H} :
0 \subsetneq H_1 \subsetneq \hdots \subsetneq H_\gamma \subsetneq  k^{N+1}.
\]
For notational convenience, we extend the list of subspaces in~$\mc{H}$ by setting
\[
H_0 = \{0\}, \quad H_{\gamma+1} = k^{N+1}.
\]
Given a flag $\mc{H}$ and $T \in \PE$, the \emph{Hessenberg function of $\mc{H}$ relative to $T$} is defined by
\begin{align*}
    \Hess_{T,\mc{H}}  : \; & \{0, \hdots, \gamma + 1\} \to \{0, \hdots \gamma + 1\}, \\
    & i \mapsto \min \{j : TH_i \subseteq H_j\}.
\end{align*}
Given a flag $\mc{H}$ and a point $T\in\PE$, we say that~$\mc{H}$ is of \emph{Type~I,~II, or~III relative to~$T$} if it has the following properties, although we note that~$\mc{H}$ need not satisfy any of these conditions\textup:
\begin{itemize}
    \item \textup{Type I:}  $\gamma = 1$, and $\mc{H} = (H_1)$ satisfies either
    \[
    \Bigl( 0 \neq TH_1 \subseteq H_1 \Bigr)
    \qquad\text{or}\qquad
    \Bigl(TH_1 \subseteq H_1 \quad \text{and} \quad T(k^{N+1}) \not\subseteq H_1
    \Bigr).
    \]
    \item \textup{Type II:} The Hessenberg function $\Hess_{T, \mc{H}}$ satisfies, for all $t$ in the range $1 \leq t \leq \gamma,$
$$\Hess_{T, \mc{H}}(t) = t + 1.$$
    \item \textup{Type III:} The Hessenberg function $\Hess_{T, \mc{H}}$ satisfies, for all $t$ in the range $1 \leq t \leq \gamma + 1$,
$$\Hess_{T, \mc{H}}(t) = t - 1.$$
\end{itemize}
\end{defn}
Intuitively speaking, Type II flags are unfurled by $T$ in $\gamma + 1$ steps, and Type III flags are furled by $T$ in $\gamma + 1$ steps. A map admits a Type III flag if and only if it is nilpotent. Type I flags are simply those with $\gamma = 1$ that are not Type II or Type III.

Our main theorem shows that GIT stability of a marked linear map can be checked on these three types of flags.
\begin{thm}
\label{thm_main}
Let $k$ be an algebraically closed field, and let $N, n, q \in \N$. We consider the variety 
\[
\Twip = \PE \times (\PP^N)^n
\]
equipped with the action \eqref{eq_acn} of $\SL_{N+1}$, the sheaf $\shL = \shO(q, 1, \hdots, 1)$, and the unique $\SL_{N+1}$-linearization. 
\begin{itemize}
    \item[\textup{(a)}]
    Let $T\in\PE$, and let $\mc{H}$ be a flag in $k^{N+1}$. Then $\mc{H}$ belongs to at most one of the three types I, II, or III relative to~$T$ described in Definition~\ref{defn:flaginkN1}.
    \item[\textup{(b)}]
    A point $(T,v_1, \hdots, v_n) \in \Twip$ is GIT semistable if and only if, for every $1\le\gamma\le N$ and every flag $\mc{H}=(H_1,\ldots,H_\gamma)$ in $k^{N+1}$ that is of Type~I,~II, or~III relative to~$T$ as described in Definition~\ref{defn:flaginkN1}, we have
\begin{equation} \label{eq_main}
\sum_{j=1}^\gamma \# \{i \colon v_i \in H_j\} \leq \frac{n}{N+1}  \sum_{j=1}^\gamma \dim H_j + 
\begin{cases}
0 &\text{if $\mc{H}$ is Type I relative to $T$,} \\
q &\text{if $\mc{H}$ is Type II relative to $T$,} \\
-q &\text{if $\mc{H}$ is Type III relative to $T$.} \\
\end{cases}
\end{equation}
    \item[\textup{(c)}]
    A point $(T,v_1, \hdots, v_n) \in \Twip$ is GIT stable if and only if, for every flag $\mc{H}$, the inequality \eqref{eq_main} in \textup{(b)} is strict.
\end{itemize}
\end{thm}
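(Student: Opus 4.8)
The plan is to apply the Hilbert–Mumford numerical criterion to the $\SL_{N+1}$-action on $\Twip$ with the linearization coming from $\shL = \shO(q,1,\dots,1)$, and then to translate the resulting optimization over one-parameter subgroups into the combinatorial statement about flags. A one-parameter subgroup $\lambda \colon \Gm \to \SL_{N+1}$ is, up to conjugacy, given by an integer weight vector $(a_0 \geq \cdots \geq a_N)$ with $\sum a_i = 0$; such a $\lambda$ determines a flag $\mc{H}_\lambda$ in $k^{N+1}$ by grouping coordinates of equal weight, and conversely every flag arises this way. First I would compute the Hilbert–Mumford function $\mu^{\shL}\big((T,v_1,\dots,v_n),\lambda\big)$ as a sum of three contributions: the contribution from the marked points $v_i$, which is the classical Mumford weight $\sum_i \max\{ -a_j : v_i \text{ has nonzero } j\text{-coordinate}\}$ weighted by the $\shO(1)$-factors, and the contribution from the matrix $T \in \PP(\LEnd_{N+1})$ under the conjugation action weighted by $q$. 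The conjugation action has weights $a_i - a_j$ on the $(i,j)$ matrix entry, so the $T$-contribution is $q \cdot \max\{ a_j - a_i : T_{ij} \neq 0\}$ (with the appropriate sign convention), and semistability is the assertion that this total is $\geq 0$ for all $\lambda$, with stability requiring strict inequality for all nontrivial $\lambda$.

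The next step, which is the technical core, is to reduce the infinite optimization over all weight vectors to the finite set of flags of Types I, II, III. Fixing the flag $\mc{H} = \mc{H}_\lambda$, the admissible weight vectors $\lambda$ inducing it form a rational polyhedral cone (intersected with the hyperplane $\sum a_i = 0$), and on this cone the marked-point contribution is linear while the $T$-contribution, $q\max\{a_j - a_i : T_{ij}\neq 0\}$, is piecewise linear and convex. Minimizing $\mu^{\shL}$ over this cone is thus a convex optimization whose optimum is attained at a vertex or extreme ray of a natural polyhedral subdivision; identifying these optimal directions is exactly the content of the companion combinatorial computation advertised in the abstract — the vertices and facets of polyhedra generated by $A_N$ roots. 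I expect the outcome to be that the binding directions are precisely those for which the matrix $T$ moves the flag in the "tightest" possible way: either $T$ preserves the flag (the linear, Type I regime, where the $T$-term contributes $0$ at the optimal ray and one recovers a signed Mumford inequality), or $T$ shifts the flag up by one step ($\Hess_{T,\mc{H}}(t) = t+1$, Type II, where the $T$-term contributes a controlled $+$ amount proportional to $q$), or $T$ shifts it down by one step ($\Hess_{T,\mc{H}}(t)=t-1$, Type III). The passage from $\mu^{\shL}\geq 0$ on these optimal rays to the displayed inequality \eqref{eq_main} is then a bookkeeping computation: choosing on each relevant cone the canonical extremal weight vector (e.g. $a_j = $ a step function jumping by integer amounts at the subspaces $H_1, \dots, H_\gamma$), summing the per-step inequalities, and recognizing the sum $\sum_j \#\{i : v_i \in H_j\}$ and the normalization $\frac{n}{N+1}\sum_j \dim H_j$.

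Part (a) — that a flag is of at most one type relative to $T$ — should fall out either directly from the definitions (Types II and III impose contradictory behavior of $\Hess_{T,\mc{H}}$ as soon as $\gamma \geq 1$, and the $\gamma = 1$ case is handled by the explicit exclusion built into the definition of Type I), or as a byproduct of the observation that the three types correspond to disjoint regions in the fan of optimal one-parameter subgroups. Finally, (c) follows from (b) by the standard fact that stability is semistability with all inequalities strict for nontrivial $\lambda$, together with the observation that a flag $\mc{H}$ that is \emph{not} of any of the three types relative to $T$ yields a one-parameter subgroup whose Hilbert–Mumford value is strictly dominated by that of one of the three admissible types, so imposing strict inequality only on Types I, II, III already forces it for all $\lambda$. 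The main obstacle will be the second step: carrying out the convex-geometric analysis of the root polytopes precisely enough to show that no flags \emph{other} than Types I, II, III can be binding, and that the extremal weight vectors produce exactly the constants $0, q, -q$ rather than some other multiples of $q$ — this is where the delicate facet/vertex computation for the $A_N$ root polyhedra, presumably developed in the body of the paper, is indispensable.
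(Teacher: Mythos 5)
Your outline reproduces the paper's strategy: the Hilbert--Mumford criterion, the splitting of the numerical invariant into a marked-point term and a $q$-weighted conjugation term whose weights are the $A_N$ roots $e_i-e_j$, and a convex-geometric reduction of the infinitely many one-parameter subgroups to finitely many binding directions identified with Type I, II, III flags (your treatment of parts (a) and (c) is also fine at the level stated). The problem is that what you yourself call the technical core is only predicted, not proved: the claim that the binding directions are exactly those attached to Type I, II, III flags, with constants exactly $0$, $q$, $-q$, is introduced with ``I expect the outcome to be\dots'' and never established, so as it stands the argument has a genuine gap precisely where the theorem lives.

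Concretely, two pieces are missing. First, the facet computation: the paper forms the corner polyhedron (the Minkowski sum of the projected weight polytope with the nonnegative orthant in the coordinates $s_i=x_1+\cdots+x_i$, which encodes stability against all descending one-parameter subgroups in a fixed basis), computes its vertices via the Dyck-path profile of the matrix, and then its facets; the answer splits according to whether the matrix of $T$ in the given basis is strictly upper triangular or not, and this dichotomy is exactly where the constants $0,\pm 1$ (hence $0,\pm q$) come from and why Type I flags absorb the invariant-subspace facets of both cases --- which is also why the Type I definition carries its exclusion clause, making (a) true. Second, the reduction to the three flag types needs a two-way completion argument that your ``strict domination'' remark does not supply: given a Type I, II, or III flag one must exhibit a completion to a full flag (equivalently a basis) in which the corresponding facet actually occurs --- for Type I a permutation of a completion making the matrix not strictly upper triangular, for Type III the completion through $T^{-1}(0)\subsetneq T^{-1}(H_1)\subsetneq\cdots$ --- and conversely one must check that every facet of every corner polyhedron, over every basis, is controlled by a subflag of one of the three types, so that the flag inequalities really imply semistability in all bases. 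Finally, your framing of the reduction as a minimization over a cone attaining its optimum at extreme rays needs care: the Hilbert--Mumford function is homogeneous of degree one in the one-parameter subgroup, so the correct statement is a dual one (a positive hyperplane-separation/facet description of the corner polyhedron), not an optimum attained at a vertex.
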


We deduce Theorem \ref{thm_main} as a special case of Theorem \ref{thm_main_any_sheaf}, which describes GIT stability and semistability relative to any very ample invertible sheaf $\shL = \shO(q, m_1, \hdots, m_n)$, where $q, m_1, \hdots, m_n \in \N$. All our results are characteristic-independent.

\begin{example} \label{ex_one_marked_point}
When the number of marked points is $n = 1$, the complicated criteria of Theorem \ref{thm_main} condense as follows. The stable and semistable loci are equal. The pair $(T,v)$ is stable if $T$ is non-nilpotent and $v$ is a cyclic vector for $T$ up to scaling. The stable quotient is a weighted projective space $\PP(1,2,\ldots N+1)$, related to the Mumford-Suominen moduli space of linear maps equipped with a cyclic vector \cite{MR0437531}. See Section \ref{sect_one_marked_point} for details.
\end{example}

\begin{example} \label{ex_dim_one}
On $\PP^1$, complete flags are in 1-1 correspondence with points. If $T$ is non-nilpotent, then fixed points and indeterminate points correspond to Type I flags and all other points correspond to Type II flags. If $T$ is nilpotent, then its indeterminacy point corresponds to a Type III flag and all other points correspond to Type II flags. Thus, Theorem \ref{thm_main} reduces to the following semistability criterion. If $T$ is non-nilpotent, then up to $n/2 + q$ points may coincide at a general point of $\PP^1$, and up to $n/2$ points may coincide at each fixed point or indeterminate point of $T$. If $T$ is nilpotent, then up to $n/2 + q$ points may coincide at a general point of $\PP^1$, and up to $n/2 - q$ points may coincide at the indeterminacy point of $T$. Thus fixed points and indeterminacy points behave somewhat like marked points.
\end{example}

\begin{example} \label{ex_id}
For any $N, n, q \in \N$, if $T$ is the identity map, then there are no Type II or III flags, and each linear subvariety of $\PP^N$ defines a Type I flag. Thus Theorem \ref{thm_main} specializes to Mumford's stability test \eqref{eq_mumford_template}.
\end{example}

\begin{example} \label{ex_large_q}
For fixed choices of $N, n\in \N$ in Theorem \ref{thm_main}, if $q$ is sufficiently large, then the Type II condition of \eqref{eq_main} is vacuously true, and the Type III condition of \eqref{eq_main} is vacuously false. Thus, if $q$ is sufficiently large, a marked linear map $(T, v)$ is stable relative to $\shO(q,1,\hdots,1)$ if and only if $T$ is non-nilpotent and $v$ meets the Mumford stability test \eqref{eq_mumford_template} for each $T$-invariant proper subspace $H \subsetneq k^{N+1}$.
\end{example}

These examples suggest the following interpretation of Theorem \ref{thm_main}: the more that the map $T$ moves around the marked points, the better for GIT stability; the more marked points that eventually enter the kernel of $T$, the worse.

Our work is a starting point for describing the structure of $\mc{M}_{1,n}^N$ in more detail. Mumford and Suominen showed that the stable locus for the conjugation action of $\SL_{N+1}$ on $\PE$ is empty \cite{MR0437531}. Marking points and using an appropriate sheaf resolves this problem.
\begin{cor} \label{cor_stable_nonempty}
    Let $n \geq 1, N \geq 1$, and $q \geq n$. The stable locus in $\Twip$, relative to the sheaf $\shL = \shO(q, 1, \hdots, 1)$ with its unique linearization, is nonempty.
\end{cor}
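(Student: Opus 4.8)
The plan is to exhibit one point of $\Twip$ that satisfies the strict form of \eqref{eq_main} for every flag of Type~I, II, or~III, and then conclude by Theorem~\ref{thm_main}(c). The first step is to choose $T$ so that the collection of flags that must be tested is small. Since a map admits a Type~III flag only when it is nilpotent, any non-nilpotent $T$ already removes all Type~III flags. To also control the Type~II flags I would take $T$ with exactly two eigenvalues, the second of multiplicity one: say $T=\diag(\lambda,\ldots,\lambda,\mu)$ in an eigenbasis $e_1,\ldots,e_N,e_0$, with $\lambda\neq\mu$ and $\lambda\mu\neq0$. Every subspace occurring in a Type~II flag must be non-$T$-invariant, because $\Hess_{T,\mc H}(t)=t+1$ forces $TH_t\not\subseteq H_t$; but for this particular $T$ one checks directly that $Tw=\lambda w+(\text{scalar})\,e_0$ for all $w$, so $TH+H=H+\langle e_0\rangle$ whenever $H$ is non-invariant, while any subspace containing $e_0$ is $T$-invariant. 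Hence $H_{t+1}\supseteq H_t+TH_t\ni e_0$ would make $H_{t+1}$ invariant, which is impossible for $t+1\le\gamma$; therefore every Type~II flag relative to this $T$ has $\gamma=1$. For such a flag \eqref{eq_main} reads $\#\{i:v_i\in H_1\}\le\frac{n}{N+1}\dim H_1+q$, and since the left side is at most $n$, $\dim H_1\ge1$, and $q\ge n>\frac{nN}{N+1}$, the inequality is automatically strict for every configuration.

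It then remains to choose the marked points so that the Type~I inequalities hold strictly. For this $T$ the Type~I flags are exactly the proper nonzero $T$-invariant subspaces, namely the subspaces $W\subseteq\langle e_1,\ldots,e_N\rangle$ and the subspaces $W\oplus\langle e_0\rangle$ with $W\subseteq\langle e_1,\ldots,e_N\rangle$, and \eqref{eq_main} for them is precisely Mumford's classical test. I would place $v_1,\ldots,v_n$ in general position with nonzero $e_0$-component; then no $v_i$ lies in any subspace of $\langle e_1,\ldots,e_N\rangle$, so those inequalities hold with left side $0$, and for a subspace $W\oplus\langle e_0\rangle$ the count $\#\{i:v_i\in W\oplus\langle e_0\rangle\}$ equals the number of projected points $\pi_\lambda(v_i)\in\PP^{N-1}$ lying in $W$, which general position bounds above by $\dim W$; a direct numerical comparison of $\dim W$ with the Mumford bound $\frac{n}{N+1}(\dim W+1)$ then yields the strict inequality. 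When this comparison fails because there are too few marked points relative to $N$, I would instead take $T$ to be a single Jordan block with nonzero eigenvalue, whose invariant subspaces form a single chain that the $v_i$ can be chosen to avoid entirely, at the cost of reintroducing long Type~II flags.

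The main obstacle in that alternative range is the estimate for long Type~II flags. The key structural input is again $TH_t\subseteq H_{t+1}$: if $v_i\in H_t$ then $Tv_i\in H_{t+1}$, $T^2v_i\in H_{t+2}$, and so on, so the entire $T$-cyclic tail of $v_i$ is forced into the upper part of the flag. For a configuration in general position these tails impose independent conditions, and a dimension count shows that a marked point contributing $k$ to $\sum_j\#\{i:v_i\in H_j\}$ forces a matching growth in $\sum_j\dim H_j$; summing over $i$ bounds the overcount $\sum_j\#\{i:v_i\in H_j\}-\frac{n}{N+1}\sum_j\dim H_j$ by $\frac{nN}{N+1}<n\le q$, which is the desired strict inequality. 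Verifying that the cyclic tails of distinct marked points are genuinely independent, so that this counting argument is valid, is the step I expect to require the most care.
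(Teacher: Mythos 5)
Your first construction is correct, and for $n \ge N$ it yields a complete proof that is genuinely different from the paper's: the paper takes $T$ diagonalizable with distinct eigenvalues and places all $n$ marked points at a single cyclic vector $v$, whereas you choose $T$ with one eigenvalue of multiplicity $N$ precisely so that every Type II flag has $\gamma = 1$, and then spread the points out to pass the Type I (invariant subspace) tests. Your extra care with Type II is warranted: for the paper's own example the flag $H_j = \Span(v, Tv, \hdots, T^{j-1}v)$, $1 \le j \le N$, is Type II, and \eqref{eq_main} for it reads $nN \le \frac{nN}{2} + q$, so that example really needs $q > \frac{nN}{2}$, not merely $q \ge n$; your two-eigenvalue $T$ avoids this issue entirely, at the cost of the Type I counting that restricts you to $n \ge N$.

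The genuine gap is your third paragraph, and it is not a repairable technicality. With $T$ a single Jordan block with nonzero eigenvalue, the Type I inequalities force every $v_i$ off the whole chain of invariant subspaces, so each $v_i$ is a cyclic vector; then $H_j = \Span(v_i, Tv_i, \hdots, T^{j-1}v_i)$, $1 \le j \le \gamma$, is a Type II flag whose overcount is $\gamma - \frac{n}{N+1}\cdot\frac{\gamma(\gamma+1)}{2}$. This is not bounded by $\frac{nN}{N+1}$: for $n = 1$ and $\gamma = N$ it equals $\frac{N}{2}$, and no genericity of the configuration can help, because the offending flag is built out of the marked point itself. So your fallback already fails at $(N,n,q) = (2,1,1)$ and $(4,1,1)$, both allowed by the hypothesis $q \ge n$. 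Worse, in this range no choice of example can work if Theorem \ref{thm_main} is taken at face value: at $(N,n,q) = (2,1,1)$ a stable $(T,v)$ with $T$ non-nilpotent must have $v$ outside every $T$-invariant proper subspace (Type I), hence cyclic, and then the cyclic Type II flag gives equality in \eqref{eq_main}, while nilpotent $T$ is excluded by the Type III inequalities. Thus the corollary in the stated generality $q \ge n$ is in tension with Theorem \ref{thm_main}, and the paper's own proof (which dismisses Type II with the phrase ``since $q \ge n$'') suffers from exactly the overcount you were trying to control. Your instinct that the $n < N$ case requires the most care is therefore right, but the honest conclusion is that a correct statement needs either a stronger hypothesis such as $q > \frac{nN}{2}$ (for which the paper's example, or your Jordan-block one, does work) or the restriction $n \ge N$ covered by your first construction.
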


The moduli space $\mc{M}^1_d$ of unmarked degree $d \geq 2$ dynamical systems on $\PP^1$ is rational \cite{MR2741188}. A central open problem in algebraic dynamics is to determine the rationality of the analogous moduli spaces $\mc{M}_d^N$ of dynamical systems on $\PP^N$ with $N > 1$.  In previous work, we showed that, in the planar case ($N = 2$), if $n \geq 4$, the moduli space $\mc{M}_{1,n}^2$ is birational to $\PP^{2n}$, by working in explicit coordinates \cite[Theorem 3.2]{MR4637161}. This generalizes to any dimension:

\begin{thm} \label{thm_rational}
For any $n, N \in \N$ and any very ample invertible sheaf $\shL$, if the stable locus of $\Twip$ relative to $\shL$ is nonempty, the variety $\mc{M}_{1,n}^N(\shL)$ is rational of dimension $nN$.
\end{thm}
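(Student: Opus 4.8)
The plan is to exhibit a rational dominant map (in fact, a birational map onto its image, after passing to the quotient) from an affine space to $\mc{M}_{1,n}^N(\shL)$, by choosing a slice of the $\SL_{N+1}$-action on the stable locus. First I would observe that the dimension count is immediate once rationality is established: $\Twip$ has dimension $(N^2+2N) + nN$, the group $\SL_{N+1}$ has dimension $N^2+2N$, and on the (nonempty, open) stable locus the action has finite stabilizers, so the quotient has dimension $nN$. The substance is therefore rationality, and the natural approach is the ``no-name lemma'' / generic-slice philosophy: find a $(nN)$-dimensional locally closed subvariety $Z \subseteq \Twip$ that meets a general $\SL_{N+1}$-orbit in the stable locus transversally in a single point (or in a constant finite number of points with trivial monodromy), so that $Z \dashrightarrow \mc{M}_{1,n}^N(\shL)$ is birational, and then check that $Z$ is rational.

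The concrete slice I would use is dictated by Example \ref{ex_one_marked_point} and Corollary \ref{cor_stable_nonempty}: for a general stable point, $T$ is non-nilpotent and $v_1$ is a cyclic vector for $T$. Conjugating by $\SL_{N+1}$, I can bring the pair $(T, v_1)$ into a normal form --- e.g., put $v_1$ and its iterates $Tv_1, T^2 v_1, \ldots, T^N v_1$ into the standard basis, so that $T$ becomes a companion matrix with $v_1 = e_1$. This uses up all of $\PGL_{N+1}$ up to a finite group (the residual symmetries permuting the spectrum or rescaling, which are finite for general $T$), and leaves the companion matrix depending on $N$ free parameters (its characteristic polynomial, modulo the single scaling relation in $\PP(\LEnd_{N+1})$ --- one must be slightly careful that the companion normal form accounts correctly for the projective scaling, but a general matrix is determined by $N+1$ coefficients up to scale, giving $N$ parameters) together with the remaining $n-1$ marked points $v_2, \ldots, v_n \in \PP^N$, each contributing $N$ parameters. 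This yields a rational parametrization by an open subset of $\mathbb{A}^N \times (\PP^N)^{n-1}$, hence by an open subset of $\mathbb{A}^{nN}$, and the parametrized family is generically a single orbit, so the induced map to the GIT quotient is birational. Rationality of $\mc{M}_{1,n}^N(\shL)$ follows, and the dimension is $nN$ as computed.

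The main obstacle, and the point requiring the most care, is the monodromy/finite-group issue: the normal form above is only canonical up to the finite group of symmetries of a cyclic pair (for a companion matrix with distinct eigenvalues, essentially the stabilizer is trivial in $\PGL$, but one should confirm there is no residual continuous ambiguity and that the finitely many choices of slice point over a general orbit form a single Galois orbit, so that the function field of $Z$ really equals the function field of the quotient rather than a finite extension of it). Equivalently, one invokes a version of the no-name lemma: since $\PGL_{N+1}$ acts generically freely on the stable locus of the cyclic-vector stratum and that stratum is rational and $\PGL_{N+1}$-equivariantly ``special'' enough (a rational variety with a generically free action of a connected group whose quotient we can identify), the quotient is rational. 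I would make this rigorous by directly writing the birational inverse: given $(T, v_1, \ldots, v_n)$ stable and general, the cyclic structure of $(T,v_1)$ produces a canonical basis, hence canonical affine coordinates, giving a rational section of $\Twip \dashrightarrow \mc{M}_{1,n}^N(\shL)$. Finally I would note that the hypothesis ``stable locus nonempty'' is exactly what guarantees the relevant stratum (non-nilpotent $T$, cyclic $v_1$, remaining points in general position) is nonempty and dense, so the construction applies; when the stable locus is empty there is nothing to prove.
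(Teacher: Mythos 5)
Your overall strategy (put a general stable point into a normal form adapted to the cyclic pair $(T,v_1)$, then show the resulting parameter space is rational) is viable, but the step asserting that the companion normal form ``uses up all of $\PGL_{N+1}$ up to a finite group'' is false, and this is a genuine gap. Consider your slice $Z$ of points $([C_\alpha], e_1, v_2, \hdots, v_n)$, where $C_\alpha$ is the companion-type matrix with subdiagonal $1$ and last column $\alpha \in \A^{N+1}\smallsetminus\{0\}$. The one-parameter subgroup $D_\lambda = \diag(1, \lambda, \lambda^2, \hdots, \lambda^N)$ fixes $e_1$ and satisfies $D_\lambda C_\alpha D_\lambda^{-1} = \lambda\, C_{\alpha'}$ with $\alpha'_i = \lambda^{\,i-N-2}\alpha_i$, so projectively it carries one slice point to another while acting nontrivially both on $\alpha$ (a weighted scaling with weights $N+1, N, \hdots, 1$) and on $v_2, \hdots, v_n$. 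A short computation with the cyclic basis (if $Ae_1 = ae_1$ and $AC_\alpha A^{-1} = cC_{\alpha'}$, then $Ae_{k+1} = c^k a\, e_{k+1}$) shows this residual torus is exactly the transporter of a general slice point back into $Z$. Hence $Z$ has dimension $nN+1$ and meets a general stable orbit in a one-dimensional family, not finitely many points: you have conflated the true statement that the stabilizer of a general cyclic pair is trivial in $\PGL_{N+1}$ with the false statement that your slice meets each orbit finitely often. Relatedly, ``characteristic polynomial modulo the single scaling relation'' does not parametrize projective companion matrices: rescaling $\alpha$ alone already changes the projective class of $C_\alpha$ (the subdiagonal stays $1$), and the genuine identification on the slice is the weighted scaling above, which moreover moves the other marked points. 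So the proposed $nN$-parameter family is either not well defined or maps to the quotient with one-dimensional fibers; in neither case is it birational to $\mc{M}_{1,n}^N(\shL)$ as claimed.

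The gap is repairable: quotient the whole slice $(\A^{N+1}\smallsetminus\{0\})\times(\PP^N)^{n-1}$ by this residual $\Gm$; since the coefficient $\alpha_{N+1}$ scales with weight one, a general orbit has a unique representative with $\alpha_{N+1}=1$, giving a birational model $\A^{N}\times(\PP^N)^{n-1}$, which is rational of dimension $nN$. Note also how the paper's own proof sidesteps the issue: it normalizes a general $T$ to be diagonal and sends $v_1$ to the unit point $[1:\hdots:1]$, which breaks the diagonal torus completely, so the residual ambiguity really is finite (the symmetric group $\mc{S}_{N+1}$ permuting eigenvalues), and rationality reduces to rationality of quotients by $\mc{S}_{N+1}$ via symmetric functions. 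Your dimension count and the reduction to a dense open stratum of the stable locus are fine, but as written the central birationality claim fails for the slice you chose.
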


\begin{question} We suggest some directions for future research into these moduli spaces.
\begin{enumerate}
\item Does Theorem \ref{thm_main} generalize to other Lie types?
\item What is the isomorphism type of the moduli space $\mc{M}_{1,2}^1 (\shL)$ of 2-marked linear maps on $\PP^1$?
\item What is the variation of the isomorphism type of $\mc{M}_{1,n}^N (\shL)$ with $\shL$?
\item What is the dimension of the GIT boundary? What are its irreducible components, particularly when $N = 1$?
\item For which $n, N, q \in \N$ and sheaves $\shL$ are the stable and semistable loci equal?
\end{enumerate}
\end{question}

We now sketch the proof of our main theorem, then discuss motivation from three areas: algebraic dynamics, combinatorial algebraic geometry, and integrable systems.

\subsection{Sketch of the proof of Theorem \ref{thm_main}}
The proof of Theorem \ref{thm_main} is via polyhedral combinatorics. There is a standard method in GIT for checking stability of a point. To any maximal torus in $\SL_{N+1}$ and point $p \in \Twip$, we associate a convex polytope called the \emph{weight polytope} or \emph{state polytope}. A point $p$ is GIT stable with respect to a given maximal torus if and only if its weight polytope contains the origin. 

We find that the weight polytopes arising from the conjugation action of $\SL_{N+1}$ on $\PE$ are \emph{root polytopes of $A$ type} \cite{MR4310906}. The vertices of these polytopes are root vectors of the $A_N$ lattice. However, the weight polytopes that arise from the combined action of $\SL_{N+1}$ on $\Twip$ are rather complicated. To deal with this issue, we introduce a variant of the weight polytope called the \emph{corner polyhedron} (Definition \ref{def_corner}). The corner polyhedron is the Minkowski sum of the standard weight polytope and the nonnegative orthant. The idea is that, since the Weyl group of the $A_N$ lattice is the symmetric group $\mc{S}_N$, every 1-parameter subgroup in $\SL_{N+1}$ can be put in a descending form, and the corner polyhedron describes stability relative to descending 1-parameter subgroups. This allows us to frame complicated systems of inequalities, like those in \cite{MR4132597}, in the intuitive language of convex geometry.

We show that the action of $\SL_{N+1}$ on the factor $(\PP^N)^n$ has the effect of translating the corner polyhedron for the factor $\PE$. Thus we understand the corner polyhedra for the combined action by computing all facets of the corner polyhedra of type $A_N$. Each facet imposes a necessary condition for GIT stability.

Our results on the vertices and facets of corner polyhedra of type $A_N$ are of independent combinatorial interest; they are contained in Proposition \ref{prop_vertices} and \ref{prop_facets}. For similar results about root polytopes, see \cite{MR4310906}. We also introduce a combinatorial gadget called the \emph{matrix profile} (Definition \ref{def_profile}) which classifies corner polyhedra. 

Finally, in Section \ref{sect_proof}, we finish the proof. We show that the many necessary conditions for GIT stability imposed by the facets of corner polyhedra can in fact be checked on just Type I, II, and III flags.

\subsection{Motivation from dynamical moduli spaces} \label{sect_alg_dyn}

A \emph{dynamical system of degree $d$} on $\PP^N$ is a morphism $\PP^N \to \PP^N$ of degree $d$. Since conjugating a morphism by a projective transformation produces a morphism with the same dynamics, the \emph{moduli space $\mc{M}_d^N$ of degree $d$ dynamical systems on $\PP^N$} is defined as the space of morphisms $\PP^N \to \PP^N$ up to the action of $\SL_{N+1}$ by conjugation. These moduli spaces were constructed with GIT by Silverman \cite{MR1635900} in degree $d \geq 2$ and dimension $N = 1$, and by Levy and Petsche-Szpiro-Tepper in degree $d \geq 2$ and dimension $N > 1$ \cite{MR2741188, MR2567424}. The analogous moduli space in degree $d = 1$ is impossible to construct because the GIT stable locus is empty \cite{MR0437531}.

The moduli space $\mc{M}^N_d$ admits a GIT semistable compactification $\bar{\mc{M}}^N_d$ that includes some morphisms of lower degree and rational maps; these degenerate objects are represented in the GIT boundary of the quotient, that is, $\bar{\mc{M}}^N_d \smallsetminus \mc{M}^N_d$. One might speculate that these rational maps are somehow dynamically special, although we note that the natural iteration maps between these moduli spaces have indeterminacy \cite{MR1764925}.

Now we discuss the reasons for considering marked points. The $\SL_{N+1}$-action \eqref{eq_acn}, defining projective equivalence of marked linear maps, preserves dynamical relationships among the marked points. That is, if we write
$$(T', v'_1, \hdots, v'_n) = A \cdot (T, v_1, \hdots, v_n),$$
then $A$ takes $T$-orbits to $T'$-orbits. Further, the orbit relations of the marked points are preserved; for instance, if for some $i, j$ we have $T(v_i) = v_j$, then also $T'(v'_i) = v'_j$. Any set of relations of this kind defines a subvariety of $\mc{M}_{1,n}^N$.

These notions were formalized by Doyle-Silverman as the theory of \emph{portrait moduli spaces} \cite{MR4132597}. They define the \emph{moduli space $\mc{M}_{d}^N(\mc{P})$ of degree $d \geq 2$ dynamical systems on $\PP^N$ with portrait $\mc{P}$}. A \emph{portrait} is a weighted digraph that prescribes dynamical relations on the marked points. Our moduli space $\mc{M}_{1,n}^N$ is the special case of this construction where $d = 1$ and $\mc{P}$ has $n$ vertices and no edges.

Doyle-Silverman obtained a partial description of GIT stability in degree $d \geq 2$ \cite[Theorem 8.1]{MR4132597}; our Theorem \ref{thm_main} is a complete description in degree $d = 1$. In principle, our techniques would give an exact characterization of GIT stability in degree $d \geq 2$, but the weight polyhedra will be much more complicated.

\subsection{Motivation from combinatorial algebraic geometry} \label{sect_dynamics_on_flags}
Definition \ref{defn:flaginkN1} associates a Hessenberg function $\Hess_{T, \mc{H}}$ to each flag $\mc{H}$ of length $\gamma + 1$ and projective linear map $T$. This function is a nondecreasing self-map of $\{0, \hdots, \gamma + 1\}$. The Hessenberg function captures the dynamics induced on the flag $\mc{H}$ by $T$, in the following sense: lifting $T$ to an endomorphism of $k^{N+1}$, there is a semiconjugacy from $T$ to $\Hess_{T, \mc{H}}$ as set-maps, defined by
\begin{align*}
    &k^{N+1} \to \{0, \hdots, \gamma + 1\},\\
    &v \mapsto \min \{j \colon v \in H_j \}.
\end{align*}

Hessenberg functions of flags arise in combinatorial algebraic geometry. Flags in $k^{N+1}$ are parametrized by \emph{flag varieties}, and each flag variety is stratified into \emph{Hessenberg varieties} via the associated Hessenberg functions relative to a fixed endomorphism $T$. The dynamics of $T$ tend to be reflected in the structure of the corresponding Hessenberg varieties. There is a substantial literature exploring the geometry of Hessenberg varieties in the complete flag variety, especially their dimension, singularities, and cohomology \cite{MR1043857, MR4391520, MR2275912}. Our work uses the notion of Hessenberg functions for incomplete flags; the corresponding generalized Hessenberg varieties are the subject of the recent paper \cite{incomplete_hessenberg}.

Theorem \ref{thm_main} shows that, whereas one normally expects each flag in $\PP^N$ to contribute a necessary condition for GIT stability, for our moduli problem, stability can be checked only on a few Hessenberg strata. This is a significant reduction; whereas the complete flag variety on $\PP^N$ has dimension $\binom{N + 1}{2}$, the Hessenberg variety of complete Type II flags has dimension at most $N$, and the Hessenberg variety of complete Type III flags has dimension at most $0$.

\subsection{Motivation from integrable systems} \label{sect_integrable_systems}

The moduli space $\mc{M}^N_{1,n}$ of $n$-marked linear maps on $\PP^N$ is the domain of a wide class of integrable systems called \emph{generalized pentagram maps} \cite{MR3253683, izosimov, MR3118623, MR3093293, MR2679816, MR2434454, MR3161305}. In this context, the space $\mc{M}^N_{1,n}$ is usually studied as a real or complex manifold that parametrizes \emph{twisted polygons}. 

\begin{defn} \label{defn_tw_poly}
Let $N, n \geq 1$ be integers. A \emph{twisted $n$-gon} $v$ in $\PP^N$ is a bi-infinite sequence $(v_i)_{i \in \Z}$  of points in $\PP^N$, called \emph{vertices}, such that there exists a unique invertible projective transformation $T$, the \emph{monodromy}, with the property that, for all $i \in \Z$,
$$Tv_i = v_{i + n}.$$
\end{defn}

A generic twisted $n$-gon is determined by its first $n$ vertices $v_1, \hdots, v_n$ and the monodromy $T$. This embeds the set of sufficiently generic twisted $n$-gons in the variety $\Twip$. Then the $\SL_{N+1}$-action \eqref{eq_acn} describes the effect of changing of coordinates on $\PP^N$, so the moduli space $\mc{M}^N_{1,n}$ parametrizes projective equivalence classes of twisted $n$-gons in $\PP^N$ and some degenerations.

Theorem \ref{thm_main} shows that the moduli space $\mc{M}^N_{1,n}$ can be equipped with the structure of an algebraic variety over any algebraically closed field, and provides a compactification $\bar{\mc{M}}^N_{1,n}$ in the category of varieties.

The moduli space of twisted $n$-gons is of interest because any projectively natural operation induces a rational self-map of $\mc{M}^N_{1,n}$ with an invariant fibration. For instance, the pentagram map \cite{MR1181089, MR2434454} is a dynamical system $\mc{M}^2_{1,n} \dashrightarrow \mc{M}^2_{1,n}$. Fixing $n \geq 4$ and cyclically ordering the vertices, we send each sufficiently generic twisted polygon $(v_i)$ to $(v'_i)$, where $$v'_i = \overline{v_{i - 2} v_{i + 1}} \cap \overline{v_{i - 1} v_{i + 2}}.$$
The map is projectively natural, so it defines a rational self-map of $\mc{M}^2_{1,n}$. Note that the $\SL_3$-equivalence class of the monodromy $T$ is an invariant of the pentagram map, providing two algebraically independent conserved quantities. In fact, the pentagram map is a discrete algebraic completely integrable system: it has an iterate which is birational to a translation on a family of abelian varieties \cite{MR3161305, MR4637161}.

Other maps on $\mc{M}^N_{1,n}$ that are known or believed to be integrable include generalized pentagram maps \cite{izosimov, MR3118623, MR3356734}, cross-ratio dynamics \cite{crossratdynamics}, and maps associated to moves on triple-crossing diagrams \cite{affolter2021crossratio}. More generally, the following recipe produces discrete dynamical systems on $\mc{M}^N_{1,n}$. Choose an integer $n' \leq n$, and let $\phi \colon (\PP^N)^{n'} \dashrightarrow \PP^N$ be any rational map which respects projective transformations of $\PP^N$. Then we get a (partially defined) self-map of the set of sequences $\Z \to \PP^N$, defined by the rule $(v_i) \mapsto (v'_i)$, where
$$v'_i = \phi(v_{i}, \hdots, v_{i+n'-1}).$$
Again there is an induced map $\Phi$ on $\mc{M}^N_{1,n}$ that admits at least $N$ independent integrals. Silverman, in a talk at the 2021 Joint Mathematics Meetings, proposed studying the integrability of $\Phi$ when $\phi$ is chosen to be the Cayley-Bacharach map $(\PP^2)^8 \dashrightarrow \PP^2$.

Thus we have the following curious observation: the moduli space $\mc{M}^N_{1,n}$ both \emph{is} a moduli space of dynamical systems with marked points, and itself admits many interesting rational dynamical systems.

\subsection{Outline}
Section \ref{sec_prelim} contains standard preliminaries from geometric invariant theory. New results begin in Section \ref{sect_polyhedra} with the definition of the corner polyhedron, the classification of the corner polyhedra of type $A_N$, and a menagerie of examples. In Section \ref{sect_proof}, we show that, instead of checking stability on every flag, one can just check Type I, II, and III flags, and we prove Theorem \ref{thm_main}, Corollary \ref{cor_stable_nonempty}, and Theorem \ref{thm_rational}. Section \ref{sect_one_marked_point} illustrates the main results in the special case $n = 1$ of one marked point.

\subsection*{Acknowledgments}
The author thanks his advisor, Joseph Silverman, for many thought-provoking discussions and for a careful reading of this manuscript. Further thanks to Niklas Affolter, Madeline Brandt, Brendan Hassett, Han-Bom Moon, Rafael Saavedra, Richard Schwartz, and Mariel Supina for helpful conversations. We also thank the anonymous referee. The author was supported by an NSF Graduate Research Fellowship under Grant No. 2040433. An earlier version of this work appeared 
as Chapter 4 of the author's Ph.D. thesis \cite{weinreich_thesis}.

\section{Preliminaries} \label{sec_prelim}
In this section, we recall some standard concepts from geometric invariant theory: categorical quotients, geometric quotients, good quotients, $G$-linearized sheaves, GIT stability, Hilbert-Mumford numerical invariants, destabilizing 1-parameter subgroups, weight sets, and the weight polytope. For a detailed development, see \cite[Chapter 6--9]{MR2004511}.

\begin{defn}
Let an algebraic group $G$ act algebraically on a variety $V$, and let $\alpha: G \times V \to V$ denote the action. A \emph{categorical quotient} is a pair $(V', \chi)$ consisting of a variety $V'$ and a $G$-invariant morphism $V \to V'$, such that for each variety $V''$, every $G$-invariant morphism $V \to V''$ factors through $\chi$ uniquely. We typically suppress the map $\chi$ from the notation.
A \emph{geometric quotient} is a categorical quotient $(V', \chi)$ with the additional property that if $v, v' \in V$ have the same image in $V'$, then they share a $G$-orbit.
A categorical or geometric quotient $(V', \chi)$ is called \emph{good} if it satisfies the additional properties:
\begin{enumerate}
    \item For any open subset of $U$ of $V'$, the homomorphism $\chi^*: \shO(U) \to \shO(\chi^{-1}(U))$ is an isomorphism onto the subring $\shO(\chi^{-1}(U))^G$ of $G$-invariant sections.
    \item If $W$ is a closed $G$-invariant subset of $V$, then $\chi(W)$ is a closed subset of $V'$.
    \item If $W_1, W_2$ are closed $G$-invariant subsets of $V$ with empty intersection, then their images in $V'$ have empty intersection.
\end{enumerate}

Let $V$ be a projective variety acted on by a geometrically reductive group $G$. Let $\shL$ be an invertible sheaf on $V$, and equip $\shL$ with a $G$-linearization. If $\shL$ is very ample, this data is more or less equivalent to a choice of embedding of $V$ into a projective space so that $G$ acts via a linear group. An extension of the $G$-action on $V$ to a linear action on that projective space is called a \emph{$G$-linearization}. A \emph{$G$-linearized sheaf} is a very ample invertible sheaf together with a choice of $G$-linearization.
\end{defn}

\begin{prop} \label{prop_sl_linearizations} 
For any very ample invertible sheaf $\shL$ on an irreducible normal projective variety $V$ acted on by $\SL_{N+1}$, there exists a unique $\SL_{N+1}$-linearization of $\shL$.
\end{prop}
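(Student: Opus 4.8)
The plan is to prove existence and uniqueness separately, using the standard theory of $G$-linearizations of line bundles.

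For \emph{uniqueness}, the key fact is that two $\SL_{N+1}$-linearizations of the same invertible sheaf $\shL$ differ by a character of $\SL_{N+1}$: if $\phi_1,\phi_2$ are two linearizations, then $\phi_1\circ\phi_2^{-1}$ defines an automorphism of the pulled-back sheaf on $\SL_{N+1}\times V$ which, because $V$ is irreducible and projective (so $\shO(V)^\times = k^\times$) and by applying the seesaw principle, is multiplication by a regular function on $\SL_{N+1}$ that is a group homomorphism into $\G_m$, i.e. a character. Since $\SL_{N+1}$ is semisimple, its character group is trivial, so the two linearizations coincide. This argument only uses that $V$ is irreducible, proper, and reduced, and that $k[\SL_{N+1}]^\times = k^\times$; normality is not really needed here but does no harm.

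For \emph{existence}, the standard approach (see e.g. \cite{MR2004511}) is: since $\SL_{N+1}$ is a connected group with trivial character group and trivial Picard group, and $V$ is normal, every invertible sheaf admits \emph{some} $\SL_{N+1}$-linearization. More precisely, one pulls $\shL$ back along the action map $\alpha\colon\SL_{N+1}\times V\to V$ and along the projection $p\colon\SL_{N+1}\times V\to V$; the obstruction to a linearization is an element of $\Pic(\SL_{N+1})$, which vanishes because $\SL_{N+1}$ is simply connected (and factorial), so $\alpha^*\shL\cong p^*\shL$, and then the cocycle condition on $\SL_{N+1}\times\SL_{N+1}\times V$ is automatically satisfied up to a character, which is again trivial. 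This produces the desired linearization, and there is no need to assume $\shL$ is very ample for existence — very ampleness is only relevant for forming the GIT quotient.

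The main obstacle — really the only subtle point — is the vanishing of $\Pic(\SL_{N+1})$ and the triviality of the character group, together with the seesaw/rigidity argument that upgrades "$\shO(V)^\times = k^\times$ and $V$ proper" into "the discrepancy between two linearizations is a character." Everything else is bookkeeping with the cocycle condition. I would cite \cite[Chapter 7]{MR2004511} for the linearization obstruction theory and simply record the two facts $X^*(\SL_{N+1}) = 0$ and $\Pic(\SL_{N+1}) = 0$, which hold because $\SL_{N+1}$ is semisimple and simply connected.
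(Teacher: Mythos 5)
Your proposal is correct and takes essentially the same route as the paper, which gives no details and simply defers to the linearization theory of \cite[Chapter 7]{MR2004511} (where the statement is an exercise): uniqueness from the triviality of the character group of $\SL_{N+1}$ via the properness/seesaw argument, and existence from $\Pic(\SL_{N+1})=0$ together with normality of $V$ to control the obstruction. Your observation that very ampleness is irrelevant and that normality enters only in the existence step is accurate.
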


Proposition \ref{prop_sl_linearizations} follows from the tools of \cite[Chapter 7]{MR2004511} and is an exercise in that chapter.

\begin{remark}
Our work fits into this setup, since $V = \Twip$ is projective and $G = \SL_{N+1}$ is geometrically reductive over any algebraically closed field. The very ample invertible sheaves up to isomorphism are classified by tuples $(q, m_1, \hdots, m_n) \in \N^{n+1}$; they are of the form $\shO(q, m_1, \hdots, m_n)$. Thus these tuples also classify $\SL_{N+1}$-linearized sheaves on $\Twip$.
\end{remark}

\begin{defn} \label{defn_stab}
Let $\shL$ be a $G$-linearized sheaf on $V$. A point $v \in V$ is \emph{semistable} with respect to $\shL$ if, for some $m \in \N$, the tensor power $\shL^{\otimes m}$
admits a global section $s \in \Gamma(V, \shL^{\otimes m})^G$ such that the set
$$ V_s \colonequals \{ y \in V \colon s(y) \neq 0 \}$$
is affine and contains $v$.

A point $v \in V$ is \emph{stable} if it is semistable and the set $V_s$ may be chosen such that the stabilizer of $v$ in $G$ is finite and all orbits of $G$ in $V_s$ are closed. A point $v \in V$ is \emph{unstable} if it is not semistable.
\end{defn}

Denote the sets of stable, semistable, and unstable points with respect to $\shL$ (with its $G$-linearization) by
$$\Vstable(\shL),\; \Vsemistable(\shL), \;\Vunstable(\shL).$$
In general, any of these sets could be empty.

Note that there is some variation in the literature regarding these definitions. In our notation, ``not stable'' and ``unstable'' mean different things. One could also consider the closely related concept of polystability, but we do not pursue this. 

The following theorem shows that the semistable quotient is a compactification of the stable quotient.
\begin{thm} \label{thm_stabquo}
Let $V$ be a variety with an algebraic action of a geometrically reductive group $G$. Let $\shL$ be a $G$-linearized sheaf.
\begin{enumerate}
    \item 
There exists a categorical quotient $\Vsemistable(\shL) \GITQ G$, and it is quasi-projective. If $V$ is projective, then $\Vsemistable(\shL) \GITQ G$ is projective.
\item
There exists a geometric quotient $ \Vstable(\shL) / G$.
\item
The stable locus $\Vstable(\shL)$ is a Zariski open subset of the semistable locus $\Vsemistable(\shL)$, which is in turn an open subset of $V$. The geometric quotient $\Vstable(\shL) / G$ is an open subset of the categorical quotient $\Vsemistable(\shL) \GITQ G.$
\end{enumerate}
\end{thm}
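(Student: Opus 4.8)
This is the fundamental construction theorem of geometric invariant theory, and the plan is to reduce the projective statement to the affine quotient theorem via the very ample linearized sheaf. Since $\shL$ is very ample and $G$-linearized, form the graded ring $R = \bigoplus_{m \ge 0} \Gamma(V, \shL^{\otimes m})$, on which $G$ acts by graded $k$-algebra automorphisms, together with its invariant subring $R^G = \bigoplus_{m \ge 0}\Gamma(V,\shL^{\otimes m})^G$. The two substantive inputs are consequences of geometric reductivity: by Nagata's theorem $R^G$ is a finitely generated $k$-algebra, and the invariant ring of an affine $G$-variety has $\Spec$ equal to a good quotient (equivalently, invariant functions separate disjoint closed invariant subsets). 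Set $Y = \operatorname{Proj} R^G$; this is projective when $V$ is (since then $R$ is finitely generated and $V = \operatorname{Proj} R$), and quasi-projective in general once one knows it is covered by finitely many affine opens as below. The inclusion $R^G \hookrightarrow R$ induces a rational map $V \dashrightarrow Y$, and comparing with Definition \ref{defn_stab} shows that its domain of definition is exactly $\Vsemistable(\shL)$: the basic open sets $V_s = \{s \neq 0\}$, as $s$ ranges over homogeneous elements of $R^G$ with $V_s$ affine, form an affine open cover of $\Vsemistable(\shL)$, and their images $Y_s = \Spec (R^G)_{(s)}$ cover $Y$.

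Next I would verify, affine-locally, that the resulting morphism $\chi \colon \Vsemistable(\shL) \to Y$ is a good categorical quotient. Over $Y_s$ one has $\chi^{-1}(Y_s) = V_s = \Spec R_{(s)}$, and it suffices to check that $(R_{(s)})^G = (R^G)_{(s)}$ and that $\Spec R_{(s)} \to \Spec (R_{(s)})^G$ is a good quotient --- both of which are exactly the affine statement cited above. Gluing over the cover $\{Y_s\}$ gives part (1): $\chi$ exists, enjoys the good-quotient properties (closed invariant subsets have closed image, disjoint ones disjoint images), and is therefore the categorical quotient by the usual formal argument. Along the way, $\Vsemistable(\shL) = \bigcup_s V_s$ is a union of opens of $V$, hence open, giving the second assertion of (3); and quasi-projectivity of $Y$ follows since it is glued from the finitely many affine pieces $Y_s$ (using finite generation of $R^G$).

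For the stable locus I would show $\Vstable(\shL)$ is a $G$-invariant open subset of $\Vsemistable(\shL)$ that is saturated for $\chi$. A semistable point $v$ is stable precisely when $Gv$ is closed in $\Vsemistable(\shL)$ and $\dim Gv = \dim G$ (finite stabilizer); the locus where $\dim Gv = \dim G$ is open by upper semicontinuity of stabilizer dimension, and one shows that the further condition that $Gv$ be closed is also open (by a semicontinuity argument on the fiber dimension of $\chi$, using that each fiber of $\chi$ contains a unique closed orbit, of minimal dimension among orbits in that fiber). Saturatedness holds because stability of $v$ depends only on the fiber $\chi^{-1}(\chi(v))$, a point being stable iff every point of that fiber is. Restricting $\chi$ to $\Vstable(\shL)$, every fiber is then a single closed orbit, so $\chi$ is a geometric quotient there, proving (2); and as a saturated open subset, $\Vstable(\shL)/G$ is an open subvariety of $Y = \Vsemistable(\shL)\GITQ G$, completing (3).

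The genuine content all lives in the affine case --- Nagata's finite-generation theorem and the separation property of geometrically reductive groups --- while the projective statement is a gluing-and-openness bookkeeping exercise built on top of it, so that is where I expect the real work to be. Since here $G = \SL_{N+1}$ is reductive in the strongest sense and the varieties $V = \Twip$ are projective and normal, one may alternatively invoke the complete treatment in \cite[Chapters 6--9]{MR2004511}; the outline above is the conceptual skeleton of that treatment.
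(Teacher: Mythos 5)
The paper does not prove this statement at all: it is quoted as standard background with a pointer to Dolgachev's Theorem~8.1 and Proposition~8.1, and your outline is exactly the construction given there (invariant sections $s$ with $V_s$ affine, Nagata finite generation and the affine good-quotient theorem for geometrically reductive $G$, gluing the pieces $V_s \GITQ G$, and saturated openness of the stable locus). So your sketch is essentially correct and takes the same route as the cited source, the only caveat being that for non-projective $V$ one should glue the finitely many affine quotients rather than literally take $\operatorname{Proj}$ of the (possibly non-finitely-generated) section ring, a point you partially acknowledge.
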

For a proof, see \cite[Theorem 8.1 and Proposition 8.1]{MR2004511}.

Suppose that the $G$-linearized sheaf $\shL$ embeds $V$ in $\PP^D$.
Then $G$ acts via a linear representation in $\SL_{D+1}$. Let $\Gm$ denote the multiplicative group scheme over $k$. Let $\ell \colon \Gm \to \SL_{D+1}$ be a 1-parameter subgroup. Every 1-parameter subgroup of $\SL_{D+1}$ is diagonal in some basis of $H^0(V,\shL)$. Such a basis is called a \emph{diagonalizing basis for} $\ell$. Choose a diagonalizing basis for $\ell$. For some $\lambda_1, \hdots, \lambda_{D+1} \in \Z$, the matrix of $\ell(\tau)$ is
\[
\ell(\tau) = \begin{bmatrix}
 \tau^{\lambda_1} \\
 & \tau^{\lambda_2} \\
 & & \ddots \\
 & & & \tau^{\lambda_{D+1}}
\end{bmatrix}.
\]
Suppose that $v \in V$, and in the diagonalizing basis for $\ell$, the homogeneous coordinates of $v$ are $v = [X_1 : \hdots : X_{D+1}]$. The \emph{numerical invariant} of $\ell$ at $v$ (with respect to $\shL$ with the chosen linearization) is
$$\mu(v, \ell) \colonequals \min_{1 \leq i \leq D+1} \{\lambda_i \colon X_i \neq 0 \}.$$
This quantity is independent of choice of diagonalizing basis for $\ell$. Note that the sign on $\mu$ varies in the literature.

Now we can state the \emph{Hilbert-Mumford numerical criterion}.

\begin{thm}[Mumford] \label{thm_numcrit}
With notation as above,
$$\Vsemistable = \{v \colon \mu(v, \ell) \leq 0 \text{ for all } \ell\},$$
$$\Vstable = \{v \colon \mu(v, \ell) < 0 \text{ for all } \ell \}.$$
\end{thm}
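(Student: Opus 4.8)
The statement to prove is the Hilbert--Mumford numerical criterion, and the plan is to establish it by the classical route: first translate GIT (semi)stability into a statement about orbit closures in the affine cone $\A^{D+1}$, then reduce the analysis of those orbit closures to one-parameter subgroups via the valuative criterion of properness together with the Cartan decomposition of $G$ over a local field. Throughout I would use the embedding $V \hookrightarrow \PP^D$ furnished by $\shL$, so that $G$ acts linearly on $\A^{D+1}$ through $\SL_{D+1}$.

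First I would pass to the affine cone. Fix a lift $\hat v \in \A^{D+1}\setminus\{0\}$ of $v$. Global sections of $\shL^{\otimes m}$ are degree-$m$ homogeneous polynomials on $\A^{D+1}$, the $G$-invariant sections are the $G$-invariant such polynomials, and for a section $s$ of an ample sheaf on the projective $V$ the open set $V_s$ is automatically affine; hence $v$ is semistable iff some positive-degree $G$-invariant homogeneous polynomial is nonzero at $\hat v$. Any positive-degree invariant vanishes at the origin and is constant on $G$-orbits, hence on orbit closures, so $0\in\overline{G\hat v}$ forces every such invariant to vanish at $\hat v$; conversely, if $0\notin\overline{G\hat v}$, geometric reductivity of $G$ (invariant regular functions separate disjoint closed $G$-invariant subsets of the cone) produces an invariant separating $\{0\}$ from $\overline{G\hat v}$, a nonconstant homogeneous component of which certifies semistability. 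Thus $v\in\Vsemistable \iff 0\notin\overline{G\hat v}$, and a parallel analysis gives $v\in\Vstable \iff G\hat v$ is closed in $\A^{D+1}$ and the stabilizer $G_{\hat v}$ is finite. Finally, in a diagonalizing basis for $\ell$ with weights $\lambda_i$, the $i$-th coordinate of $\ell(\tau)\cdot\hat v$ is $\tau^{\lambda_i}X_i$, so $\mu(v,\ell)>0$ iff $\lim_{\tau\to 0}\ell(\tau)\cdot\hat v = 0$, and $\mu(v,\ell)=0$ iff that limit exists and is a nonzero (hence $\ell$-fixed) vector.

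With this dictionary the easy inclusions are quick: if $\mu(v,\ell)>0$ for some $\ell$ then $0\in\overline{G\hat v}$, so $v$ is unstable, whence $\Vsemistable\subseteq\{v:\mu(v,\ell)\le 0\ \forall\ell\}$; and if $v$ is stable and $\mu(v,\ell)=0$ for some $\ell$, the nonzero limit $\hat w:=\lim_{\tau\to 0}\ell(\tau)\hat v$ lies in the closed orbit $G\hat v$, say $\hat w=g\hat v$, but $\hat w$ is fixed by the infinite group $\ell(\Gm)$, forcing $G_{\hat v}=g^{-1}G_{\hat w}g$ infinite, a contradiction; together with semistability this gives $\Vstable\subseteq\{v:\mu(v,\ell)<0\ \forall\ell\}$. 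The crux, and the main obstacle, is the reverse direction, which rests on the fundamental lemma: \emph{for any $\hat w\in\overline{G\hat v}$ there is a one-parameter subgroup $\ell$ of $G$ with $\lim_{\tau\to 0}\ell(\tau)\cdot\hat v\in G\cdot\hat w$.} Granting this, taking $\hat w=0$ when $0\in\overline{G\hat v}$ yields $\ell$ with $\mu(v,\ell)>0$, proving $\{v:\mu(v,\ell)\le 0\ \forall\ell\}\subseteq\Vsemistable$; and when $v$ is semistable but $G\hat v$ is not closed, applying it to a point $\hat w$ of the nonempty, origin-free boundary $\overline{G\hat v}\setminus G\hat v$ yields $\ell$ with nonzero limit, i.e. $\mu(v,\ell)=0$, so $\mu(v,\ell)<0$ for all $\ell$ forces $G\hat v$ closed (finiteness of $G_{\hat v}$ follows by a companion argument: a nontrivial one-parameter subgroup inside $G_{\hat v}$ would give $\mu=0$, and the residual unipotent case of the stabilizer is excluded using the closedness of the orbit). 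To prove the fundamental lemma I would invoke the valuative criterion: over the algebraically closed $k$, pick a curve through $\hat w$ inside $\overline{G\hat v}$ whose generic point lies in $G\hat v$, normalize it to obtain a discrete valuation ring $R$ with fraction field $K$ and uniformizer $\pi$, and lift the generic point to $g\in G(K)$ with $g\cdot\hat v$ extending over $\Spec R$ to $\hat w$ at $\pi=0$; then apply the Cartan decomposition $G(K)=\bigsqcup_{\lambda}G(R)\,\lambda(\pi)\,G(R)$ to write $g=a\,\lambda(\pi)\,b$ with $a,b\in G(R)$ and $\lambda$ a cocharacter of a maximal torus, and conclude that a suitable conjugate $\ell$ of $\lambda$ satisfies $\lim_{\tau\to 0}\ell(\tau)\cdot\hat v\in G\cdot\hat w$. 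For a general geometrically reductive $G$ one first invokes Haboush's theorem to know $G$ is reductive; for $G=\SL_{N+1}$ this is classical.

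The difficulty is concentrated in that last step of the fundamental lemma: making rigorous, through careful order-of-vanishing bookkeeping with the valuation, that the $G(R)$-factors $a$ and $b$ in the Cartan decomposition do not disturb the $\pi\to 0$ limit of $g\cdot\hat v$ (so that $a,b$ specialize harmlessly to $a_0,b_0\in G(k)$ and the limit is governed entirely by the cocharacter $\lambda$). An alternative is Kempf's theory of the optimal destabilizing one-parameter subgroup, which replaces the Cartan decomposition by a uniqueness-and-convexity argument but is comparably technical. By contrast, the affine-cone dictionary, the weight bookkeeping defining $\mu$, and the easy inclusions are all routine.
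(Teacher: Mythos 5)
The paper does not prove this theorem at all: it is quoted as standard, with the proof deferred to the cited reference (Dolgachev, \emph{Lectures on Invariant Theory}, Theorem 9.1), and your outline is exactly that classical argument — pass to the affine cone, characterize semistability by $0 \notin \overline{G\hat v}$ and stability by closedness of $G\hat v$ plus finite stabilizer, and reduce degenerations to one-parameter subgroups via a DVR and the Cartan--Iwahori decomposition $G(K) = G(R)\,\lambda(\pi)\,G(R)$. So in substance you are reproducing the proof the paper points to, and the easy inclusions, the affine-cone dictionary, and the sign bookkeeping for $\mu$ all match the paper's conventions correctly.

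One correction is needed in how you state the key lemma. As written (``for any $\hat w \in \overline{G\hat v}$ there is $\ell$ with $\lim_{\tau\to 0}\ell(\tau)\hat v \in G\hat w$'') the claim overreaches: the Iwahori bookkeeping you describe does not deliver it, and it is not the standard statement. Writing $g = a\,\lambda(\pi)\,b$ and expanding $b\hat v$ in a $\lambda$-eigenbasis with coefficients $c_i \in R$, the reduction of $\lambda(\pi)b\hat v$ picks up components where a negative weight $\lambda_i$ is compensated by positive valuation of $c_i$, so it generally differs from $\lim_{t\to 0}\lambda(t)(b_0\hat v)$; what one actually gets is a one-parameter subgroup $\ell = b_0^{-1}\lambda b_0$ whose limit on $\hat v$ exists and lies in $\overline{G\hat w}$, not necessarily in $G\hat w$. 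The correct form of the lemma (Kempf, Birkes; this is what the cited source proves) is for a nonempty \emph{closed} $G$-stable subset $S \subseteq \overline{G\hat v}$, or equivalently for $\hat w$ with $G\hat w$ closed: some $\ell$ has $\lim_{\tau\to 0}\ell(\tau)\hat v \in S$. Fortunately your two applications only need this weaker version: take $S = \{0\}$ for the semistable direction, and $S = \overline{G\hat v}\smallsetminus G\hat v$ for the stable direction (the limit is then nonzero because $0 \notin \overline{G\hat v}$, giving $\mu(v,\ell) = 0$). Relatedly, the finite-stabilizer step is cleanest not by the case analysis you sketch — a positive-dimensional unipotent stabilizer contains no one-parameter subgroup, so that route stalls — but by noting that stability amounts to properness of the orbit map $G \to \A^{D+1}$, $g \mapsto g\hat v$ (a proper map of affine varieties is finite), and the same valuative-criterion-plus-Iwahori computation shows that failure of properness produces a nontrivial $\ell$ with $\mu(v,\ell) \geq 0$. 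With those adjustments your outline is a correct rendering of the standard proof.
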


For a proof, see \cite[Theorem 9.1]{MR2004511}.

A \emph{de-semistabilizing 1-parameter subgroup} for $v \in V$ is a 1-parameter subgroup $\ell$ for which $\mu(v, \ell) \not\leq 0$.
A \emph{destabilizing 1-parameter subgroup} for $v \in V$ is a one-parameter subgroup $\ell$ for which $\mu(v, \ell) \not< 0$. By Theorem \ref{thm_numcrit}, semistability of $v \in V$ is equivalent to having no de-semistabilizing 1-parameter subgroups. Similarly, stability is equivalent to having no destabilizing 1-parameter subgroups.

We set more notation:
\begin{itemize}
    \item $\Theta$, a maximal torus in $\SL_{N+1}$;
    \item $W_\Z$, the character lattice of $\Theta$;
    \item $W$, the $\R$-vector space $W_\Z \otimes \R$;
    \item $R_\Z$, the lattice of 1-parameter subgroups of $\Theta$;
    \item $R$, the $\R$-vector space $R_\Z \otimes \R$;
    \item $\langle \cdot, \cdot \rangle: W \times R \to \R$ is the pairing defined as follows. For any $\chi \in W_\Z$ and $\ell \in R_\Z$, let $\langle \chi, \ell \rangle$ be the integer $\lambda$ such that the composed map $\chi \circ \ell : \Gm \to \Gm$ is $\tau \mapsto \tau^\lambda$. Extend linearly.
\end{itemize}
A choice of isomorphism $\Theta \cong \Gm^N$ induces an identification $W_\Z \cong \Z^{N+1}$. We also get an embedding $r \colon R_\Z \hookrightarrow \Z^{N+1}$ onto the hyperplane in $\Z^{N+1}$ where the coordinates sum to 0. Note that $R_\Z$ is isomorphic to the $A_N$ lattice rather than a free abelian group. With these identifications, the inner product $\langle \cdot, \cdot \rangle$ is computed by dot product in $\R^{N+1}$. 

The $\SL_{N+1}$-linearization of $\shL$ induces a representation $\Theta \hookrightarrow \GL_{D+1}$. Any such representation splits completely, because $\Theta$ is linearly reductive. Therefore we can choose a diagonalizing basis $X_1, \hdots, X_{D+1}$ for the action of $\Theta$ on $\shL$. Each basis element $X_i$ has an associated character $\chi_i \in W_\Z$. 

The \emph{weight} of $X_i$, denoted $\wt(X_i)$, is the image of the character $\chi_i$ in $\Z^{N+1}$. The \emph{weight set} $\wt(v) \subset W_\Z$ of $v \in V$, relative to the maximal torus $\Theta$ and the choice of identification $\Theta \cong \Gm^N$, is the set 
$$\wt(v) = \{ \wt(X_i) \colon X_i(v) \neq 0, \text{ for each } i = 1, \hdots, D + 1 \}.$$
The \emph{weight polytope} of $v$, denoted $\Pi(v)$, is the convex hull of $\wt(X_i)$ in $\R^{N+1}$. It is a convex polytope. We denote its interior by $\Pi(v)^\circ$.

We further define
$$\Vsemistable_\Theta = \{v \colon \mu(v, \ell) \leq 0 \text{ for all } \ell \subseteq \Theta \},$$
$$\Vstable_\Theta = \{v \colon \mu(v, \ell) < 0 \text{ for all } \ell \subseteq \Theta \}.$$

The weight polytope of a point $v$ depends on the identification $\Theta \cong \Gm^N$, but the weight set viewed in $W$ is basis-independent, so the statements $0 \in \Pi(v)$ and $0 \in \Pi(v)^\circ$ are basis-independent. In fact, we have the following consequence of Proposition \ref{thm_numcrit}:
\begin{prop}
    Given a maximal torus $\Theta$ of $\SL_{N+1}$ and an identification $\Theta \cong \Gm^N$, and a point $v \in V$, the following are equivalent:
    \begin{enumerate}
    \item $v \in V\gitss_\Theta$,
    \item $0 \in \Pi(v)$ for some diagonalizing basis of $\Theta$.
    \end{enumerate}
    Similarly, the following are equivalent:
    \begin{enumerate}
    \setcounter{enumi}{2}
        \item $v \in V\gits_\Theta$,
        \item $0 \in \Pi(v)^\circ$ for some diagonalizing basis of $\Theta$.
    \end{enumerate}
\end{prop}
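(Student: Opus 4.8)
The plan is to reduce the proposition to the Hilbert–Mumford numerical criterion (Theorem \ref{thm_numcrit}) restricted to one-parameter subgroups landing in $\Theta$, and then translate the numerical invariant $\mu(v,\ell)$ into a linear-functional inequality against the weight set. The key dictionary is: for a one-parameter subgroup $\ell \in R_\Z \hookrightarrow \Z^{N+1}$, after diagonalizing the $\Theta$-action on $\shL$ in a basis $X_1,\dots,X_{D+1}$ with weights $\wt(X_i) \in \Z^{N+1}$, the action of $\ell$ on $\shL$ is diagonal in the \emph{same} basis (since $\ell$ factors through $\Theta$), with $X_i$ scaled by $\tau^{\langle \wt(X_i), \ell\rangle}$. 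Hence $\mu(v,\ell) = \min\{\langle \wt(X_i),\ell\rangle : X_i(v) \neq 0\} = \min_{w \in \wt(v)} \langle w, \ell\rangle$, and since the minimum of a linear functional over a polytope is attained at the convex hull, this equals $\min_{w \in \Pi(v)}\langle w, \ell\rangle$. The first step is to record this identity carefully, noting that the choice of diagonalizing basis for $\Theta$ is simultaneously a diagonalizing basis for every $\ell \subseteq \Theta$, so no basis-change is needed as $\ell$ varies.

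Given the identity $\mu(v,\ell) = \min_{w \in \Pi(v)} \langle w, \ell\rangle$ for $\ell \in R_\Z$, the equivalence of (1) and (2) follows from a separating-hyperplane argument. If $0 \in \Pi(v)$, then $\min_{w\in\Pi(v)}\langle w,\ell\rangle \leq \langle 0,\ell\rangle = 0$ for every $\ell$, so $v \in V\gitss_\Theta$. Conversely, if $0 \notin \Pi(v)$, then since $\Pi(v)$ is a closed convex set, there is a separating hyperplane: some $u \in \R^{N+1}$ lying in the hyperplane $R$ (coordinates summing to zero — one must check the separating functional can be taken in $R$, using that $\Pi(v)$ lies in an affine subspace parallel to $R$, as all weights of an $\SL_{N+1}$-linearization sum to a common constant, which for a linearization is $0$) with $\langle w, u\rangle > 0$ for all $w \in \Pi(v)$. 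Rational, then integral, approximation of $u$ yields an $\ell \in R_\Z$ with $\mu(v,\ell) = \min_{w}\langle w,\ell\rangle > 0$, so $v \notin V\gitss_\Theta$. The equivalence of (3) and (4) is identical with all inequalities made strict: $0 \in \Pi(v)^\circ$ iff $\langle 0,\ell\rangle = 0 > \min_{w\in\Pi(v)}\langle w,\ell\rangle$ for every nonzero $\ell$ (i.e. $\mu(v,\ell) < 0$), using that $0$ is interior precisely when every supporting functional is strictly negative somewhere on $\Pi(v)$, equivalently strictly negative at its minimum.

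**The main obstacle** I anticipate is the bookkeeping around the lattice $R_\Z$ being the $A_N$ lattice rather than a free rank-$N$ lattice sitting in $\Z^{N+1}$: one must be careful that the separating hyperplane produced in $\R^{N+1}$ can be realized by an element of $R$ (the sum-zero hyperplane), and that integral approximation stays within $R_\Z$. This is handled by the observation already made in the excerpt that $\Pi(v)$ lives in (a translate of) $R$ under the linearization, so only the component of the separating functional in $R$ matters, and $R_\Z$ is a full-rank lattice in $R$, so Dirichlet/rational approximation applies within $R$. A secondary, more routine point is justifying the passage from ``$\mu(v,\ell)\le 0$ for all $\ell$'' (Theorem \ref{thm_numcrit}) to ``for all $\ell \subseteq \Theta$'': this is exactly the definition of $V\gitss_\Theta$ given just above the proposition, so no work is needed there. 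I would also remark, as the excerpt does, that while $\Pi(v)$ itself depends on the identification $\Theta \cong \Gm^N$, the conditions $0 \in \Pi(v)$ and $0 \in \Pi(v)^\circ$ do not, which is why the statement is phrased ``for some diagonalizing basis.''
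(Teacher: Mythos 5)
The paper itself gives no argument here: it frames the proposition as a consequence of the numerical criterion (Theorem \ref{thm_numcrit}) and cites Dolgachev, Theorem 9.2, for the proof. Your proposal is exactly that standard proof --- restrict the numerical criterion to one-parameter subgroups of $\Theta$, identify $\mu(v,\ell)=\min_{w\in\wt(v)}\langle w,\ell\rangle$ using that a diagonalizing basis for $\Theta$ diagonalizes every $\ell\subseteq\Theta$, and finish by convex separation with rational approximation --- and it is also the computation the paper redoes later in the proof of Proposition \ref{prop_corner_numerical}. Structurally it is the right argument.

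The one step you should not lean on is the parenthetical claim that the weights of an $\SL_{N+1}$-linearization have coordinate sum equal to $0$. They do all have a \emph{common} coordinate sum (so $\Pi(v)$ lies in an affine hyperplane parallel to $\bar{W}$), but under the paper's own normalization that constant is not $0$: for the factors $(\PP^N)^n$ with $\shO(m_1,\hdots,m_n)$ the weights are sums of standard basis vectors $e_i$ (see Proposition \ref{prop_one_point}, where $\wt(X_i)=e_i$), so the common sum is $\sum_i m_i>0$; only the conjugation factor contributes genuinely sum-zero weights $e_i-e_j$. This matters precisely at the point your converse uses it: a one-parameter subgroup $\ell$ of $\Theta$ pairs trivially with $\bm{1}$, so $\mu(v,\ell)$ only sees the projection $\bar{\Pi}(v)$ of $\Pi(v)$ to $\bar{W}$. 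If the common sum is nonzero, one can have $0\notin\Pi(v)$ while $0\in\bar{\Pi}(v)$ (the polytope meets the line $\R\bm{1}$ away from the origin); then no sum-zero functional is positive on $\Pi(v)$ and your separation step produces no de-semistabilizing $\ell$ --- indeed the point is $\Theta$-semistable. Likewise $\Pi(v)^\circ$ is empty in $\R^{N+1}$ whenever $\Pi(v)$ lies in a proper affine hyperplane, so the stable half would be vacuous as literally written. The fix is to normalize: replace $\wt$ by $\barwt$ (subtract the average, i.e.\ work in the character space of $\Theta$, which is what the cited theorem does and how the phrase ``for some diagonalizing basis'' should be read), or equivalently carry out the entire separation inside $\bar{W}$ against $\bar{\Pi}(v)$. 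After that substitution your argument goes through verbatim, including the rationality of the supporting functional in the boundary case (the vertices of $\bar{\Pi}(v)$ are rational), and it lines up with the paper's own treatment via corner polyhedra in Section \ref{sect_polyhedra}.
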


For a proof, see \cite[Theorem 9.2]{MR2004511}.

Suppose that $\SL_{N+1}$ acts on a projective variety $V$ equipped with sheaf $\shL$. Let $B$ be a basis of $k^{N+1}$. The basis $B$ has an associated maximal torus $\Theta$ in $\SL_{N+1}$ containing all the 1-parameter subgroups of $\SL_{N+1}$ that are diagonal in $B$. A basis $B'$ for $H^0(V,\shL)$ is called \emph{$B$-aligned} if the representation of $\Theta$ in $\GL_{D+1}$ is diagonal in $B'$. (We do not require the weights to be in descending order.)

The next two lemmas are standard; they describe how weight polytopes relate to the Segre and Veronese maps.

\begin{defn}
Given two subsets $\Pi_1$, $\Pi_2$ of a $k$-vector space, the \emph{Minkowski sum} of $\Pi_1$ and $\Pi_2$ is the set
$$\Pi_1 + \Pi_2 \colonequals \{ \pi_1 + \pi_2 \colon \pi_1 \in \Pi_1, \, \pi_2 \in \Pi_2 \}.$$
Given any scalar $m \in k$ and subset $\Pi$ of a $k$-vector space, define
$$m\Pi \colonequals \{ m \pi \colon \pi \in \Pi \}.$$
\end{defn}

\begin{lemma} \label{lem_segre}
Suppose that $\SL_{N+1}$ acts on two projective varieties $V_1, V_2$ via the linearized sheaves $\shL_1, \shL_2$. 
We consider the product action of $\SL_{N+1}$ on $V_1 \times V_2$.
Suppose that $B$ is a basis of $k^{N+1}$ and $B_1, B_2$ are $B$-aligned bases of $H^0(V,\shL_1), H^0(V,\shL_2)$. There is a $B$-aligned basis $B_1 \times B_2$ formed of all products of the form $b_1 b_2$, where $b_1 \in B_1$ and $b_2 \in B_2$.

The weight polytope of $(p_1, p_2)$ with respect to $B_1 \times B_2$ is the Minkowski sum
$$\Pi(p_1, p_2) = \Pi(p_1) + \Pi(p_2).$$
\end{lemma}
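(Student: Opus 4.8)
The plan is to reduce the statement to the interaction between the Segre embedding and the torus action, together with the elementary fact that taking convex hulls commutes with Minkowski sums of finite sets. First I would unwind the setup: the linearized sheaves $\shL_1, \shL_2$ embed $V_1, V_2$ into projective spaces $\PP^{D_1}, \PP^{D_2}$ on which $\SL_{N+1}$ acts linearly, and $B_1 = \{X_1, \hdots, X_{D_1+1}\}$, $B_2 = \{Y_1, \hdots, Y_{D_2+1}\}$ are the associated bases of sections, each diagonalizing the maximal torus $\Theta$ attached to $B$. The product action of $\SL_{N+1}$ on $V_1 \times V_2$ is the one induced by the external tensor product $\shL_1 \boxtimes \shL_2$; this sheaf is very ample (its complete linear system realizes the Segre embedding on the product of the two ambient projective spaces, restricted to $V_1 \times V_2$), so by Proposition \ref{prop_sl_linearizations} it carries a unique $\SL_{N+1}$-linearization, which the Segre picture identifies with the tensor product representation. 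By the Künneth formula $\Gamma(V_1 \times V_2, \shL_1 \boxtimes \shL_2) \cong \Gamma(V_1, \shL_1) \otimes \Gamma(V_2, \shL_2)$, so the products $X_i Y_j$ form a basis $B_1 \times B_2$ of sections of the product sheaf, as claimed.

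Next I would compute the weights and identify the weight set. Because $\Theta$ acts on $X_i$ through a character of weight $\wt(X_i)$ and on $Y_j$ through one of weight $\wt(Y_j)$, it acts on $X_i Y_j$ through the product character, with weight $\wt(X_i) + \wt(Y_j) \in \Z^{N+1}$; in particular $\Theta$ is diagonal in the basis $B_1 \times B_2$, which is therefore $B$-aligned, proving the first assertion. For the weight polytope, evaluating $X_i Y_j$ at $(p_1, p_2)$ gives $X_i(p_1) Y_j(p_2)$, which is nonzero exactly when $X_i(p_1) \neq 0$ and $Y_j(p_2) \neq 0$, so
$$\wt(p_1, p_2) = \{\wt(X_i) + \wt(Y_j) : X_i(p_1) \neq 0,\ Y_j(p_2) \neq 0\} = \wt(p_1) + \wt(p_2),$$
a Minkowski sum of two finite sets. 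Taking convex hulls and using the identity $\Conv(S + S') = \Conv(S) + \Conv(S')$ for finite $S, S' \subset \R^{N+1}$ — one inclusion because $\Conv$ is monotone and a Minkowski sum of convex sets is convex, the reverse by expanding a convex combination of sums $a_k + b_k$ and regrouping the coefficients — yields $\Pi(p_1, p_2) = \Pi(p_1) + \Pi(p_2)$.

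There is no serious obstacle here; the only points needing care are matching the product $\SL_{N+1}$-linearization on $V_1 \times V_2$ with the Segre/tensor-product description (handled cleanly by the uniqueness of $\SL_{N+1}$-linearizations) and checking that multiplication of sections corresponds to multiplication of homogeneous coordinates, after which the weight computation is immediate.
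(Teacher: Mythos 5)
Your argument is correct and is exactly the routine verification the paper has in mind when it omits the proof as ``a direct application of the definitions of the Segre embedding'': the sections of the product sheaf are the products $X_iY_j$, the torus acts on $X_iY_j$ by the sum of the weights, nonvanishing at $(p_1,p_2)$ factors as nonvanishing at each point, so $\wt(p_1,p_2)=\wt(p_1)+\wt(p_2)$, and $\Conv(S+S')=\Conv(S)+\Conv(S')$ for finite sets finishes it. No gaps; your care about identifying the product linearization with the tensor-product representation via uniqueness of $\SL_{N+1}$-linearizations is a sensible extra check but not a point of difficulty.
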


\begin{lemma} \label{lem_veronese}
Suppose that $\SL_{N+1}$ acts on a projective variety $\mc{X}$ via the sheaf $\shL$. For any $m \in \N$, we consider the induced action of $\SL_{N+1}$ on $\mc{X}$ with the tensor power sheaf $\shL^{\otimes m}$. Let $B$ be a basis of $k^{N+1}$, and Suppose that $B'$ is a $B$-aligned basis of $H^0(V,\shL)$. There is a $B$-aligned basis $B''$ of $H^0(V,\shL^{\otimes m})$ consisting of all $m$-fold products of elements of $B'$.

Let $\Pi(p)$ denote the weight polytope of $p$ for $\shL$, and let $\Pi^{(m)}(p)$ denote the weight polytope of $p$ for $\shL^{\otimes m}$. Then
$$\Pi^{(m)}(p) = m\Pi(p).$$
\end{lemma}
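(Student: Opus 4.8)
The plan is to compute the weight set of $p$ with respect to $\shL^{\otimes m}$ in terms of the weight set with respect to $\shL$, and then invoke two elementary facts about Minkowski sums.

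First I would set up $B''$ and verify it is $B$-aligned. The very ample sheaf $\shL$ embeds $\mc{X}$ in $\PP^D$ with homogeneous coordinates the elements $X_1, \hdots, X_{D+1}$ of $B'$, and $\shL^{\otimes m}$ is realized by composing with the $m$-th Veronese embedding of $\PP^D$, whose homogeneous coordinates are the degree-$m$ monomials $X_{i_1}\cdots X_{i_m}$; these constitute $B''$. Since $B'$ is $B$-aligned, each $X_i$ is a $\Theta$-eigenvector with weight $\wt(X_i)$, and because the character of a tensor product of $\Theta$-representations is the sum of the characters, each monomial $X_{i_1}\cdots X_{i_m}$ is again a $\Theta$-eigenvector, with weight $\wt(X_{i_1}) + \cdots + \wt(X_{i_m})$. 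Hence $B''$ is $B$-aligned, and in the settings used in this paper (products of projective spaces, where $\Gamma(\shL^{\otimes m})$ is spanned by monomials, each factoring as an $m$-fold product of monomials of the relevant multidegree) the distinct elements of $B''$ form a basis of $\shL^{\otimes m}$.

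Next I would identify the weight set. Writing $p = [X_1(p) : \cdots : X_{D+1}(p)]$, a monomial $X_{i_1}\cdots X_{i_m}$ is nonzero at $p$ precisely when each factor $X_{i_j}(p)$ is nonzero, i.e. precisely when the weight of each factor lies in $\wt(p)$. Combining this with the weight computation above, the weight set $\wt^{(m)}(p)$ of $p$ relative to $\shL^{\otimes m}$ equals
$$
\wt^{(m)}(p) = \bigl\{\, w_1 + \cdots + w_m \colon w_1, \hdots, w_m \in \wt(p) \,\bigr\},
$$
the $m$-fold Minkowski sumset of the finite set $\wt(p)$. Taking convex hulls, it remains to show $\Conv\bigl(\,\underbrace{\wt(p) + \cdots + \wt(p)}_{m}\,\bigr) = m\,\Conv(\wt(p))$. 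This uses: (i) $\Conv(A+B) = \Conv(A) + \Conv(B)$ for subsets $A,B$ of a real vector space — the inclusion $\subseteq$ holds since $A+B$ lies in the convex set $\Conv(A)+\Conv(B)$, and $\supseteq$ by expanding a product of convex combinations $\bigl(\sum_i \alpha_i a_i\bigr) + \bigl(\sum_j \beta_j b_j\bigr) = \sum_{i,j}\alpha_i\beta_j (a_i + b_j)$; and (ii) for a nonempty convex set $C$, $\underbrace{C + \cdots + C}_m = mC$, since $mC \subseteq C + \cdots + C$ is clear and $c_1 + \cdots + c_m = m\cdot\tfrac1m(c_1 + \cdots + c_m) \in mC$ by convexity. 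Applying (i) repeatedly and then (ii) to $C = \Conv(\wt(p)) = \Pi(p)$ gives $\Pi^{(m)}(p) = m\,\Pi(p)$.

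There is no serious obstacle; the only point requiring a little care is confirming that the $m$-fold products of $B'$ genuinely compute the weight polytope attached to $\shL^{\otimes m}$ rather than that of a proper sub-linear-system. In the cases applied here this is immediate from the monomial description of global sections above; in general it is exactly the statement that the Veronese re-embedding realizes $\shL^{\otimes m}$, together with the observation that the weight set only records the vanishing pattern of coordinate functions at $p$.
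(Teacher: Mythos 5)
Your argument is correct and is precisely the ``direct application of the definition of the Veronese embedding'' that the paper invokes when it omits this proof: the weight set for $\shL^{\otimes m}$ is the $m$-fold Minkowski sumset of the weight set for $\shL$, and the identities $\Conv(A+B)=\Conv(A)+\Conv(B)$ and $C+\cdots+C=mC$ for convex $C$ finish it. Your closing remark about $B''$ computing the polytope for the Veronese re-embedding (rather than a full linear system of $\shL^{\otimes m}$) is a reasonable point of care and consistent with how the lemma is stated and used.
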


The proofs are direct applications of the definitions of the Segre and Veronese embeddings; we omit them.

\section{Corner polyhedra and stability} \label{sect_polyhedra}

In this section, we introduce our key technical tool, the corner polyhedron. We use corner polyhedra instead of weight polytopes to simplify the analysis of GIT stability. Our first step is to derive the stability-detecting property of corner polyhedra from the corresponding property of weight polytopes. The corner polyhedron stability test is given in Proposition \ref{prop_corner_numerical}.

We recall some terminology. The setting is any $\R$-vector space. Unless stated otherwise, half-spaces are assumed to be closed.
\begin{itemize}
    \item A \emph{convex polyhedron} is an intersection of finitely many half-spaces. All polyhedra we consider are convex, so we may just write \emph{polyhedron}.
    \item A \emph{convex polyhedral cone} is an intersection of finitely many half-spaces, all of which contain the origin at the boundary.
    \item  A \emph{polytope} is a bounded polyhedron.
    \item A \emph{supporting halfspace} of a polyhedron is a halfspace containing the polyhedron and containing some point on its boundary. 
    \item A \emph{face} of a polyhedron is any set which is the intersection of that polyhedron with a supporting halfspace.
    \item For any $D \in \N$, a \emph{facet} of a $D$-dimensional polyhedron is a face of dimension $D - 1$. 
    \item Each facet has a unique \emph{associated supporting halfspace}. 
    \item A \emph{vertex} is a 0-dimensional face.
    \item A polyhedron is \emph{full-dimensional} if it is a polyhedron with topological dimension equal to the ambient vector space. 
    \item A polyhedron is \emph{pointed} if it has at least one vertex.
    \item A polytope is \emph{integral} with respect to a lattice if its vertices are lattice points.
    \item A polyhedron is \emph{rational} with respect to a chosen basis if the associated supporting halfspace of each facet is defined over $\Q$ when written in that basis.
\end{itemize}

We use the following basic principles of convex polyhedra throughout. For any set $\zeta$ in an $\R$-vector space, let $\Conv(\zeta)$ denote its convex hull. We use $+$ to denote Minkowski sum.

\begin{itemize}
    \item The convex hull of a finite set is a polytope.
    \item A convex polyhedron is equal to the intersection of the associated supporting halfspaces of its facets.
    \item Given convex polyhedra $\Pi_1, \Pi_2$, the Minkowski sum $\Pi_1 + \Pi_2$ is a polyhedron. Each face of $\Pi_1 + \Pi_2$ is the Minkowski sum of some face of $\Pi_1$ and some face of $\Pi_2$.
    \item Every convex polyhedron can be expressed as the Minkowski sum of a convex polyhedral cone and a convex polytope.
    \item Any functional on a polytope achieves its minimum at some vertex.
    \item An integral polytope is also rational as a polyhedron.
    \item The Minkowski sum of rational polyhedra is rational.
\end{itemize}

For any $N \in \N$, let $\R^N_+$ denote the \emph{nonnegative orthant}, i.e., the set of points in $\R^N$ with all nonnegative coordinates.
The \emph{corner hull} of a set $\zeta \subseteq \R^N$, denoted $\Corner(\zeta)$, is defined by
$$\Corner(\zeta) = \Conv(\zeta) + \R^N_+,$$
where $\R^N_+$ denotes the nonnegative orthant.

We repeatedly use the following properties of the corner hull:
\begin{enumerate}
    \item The corner hull of a finite set is a finite, full-dimensional, pointed, unbounded, convex polyhedron. The vertices of the corner hull of a set $\zeta$ are themselves elements of $\zeta$.
    \item For any sets $\zeta, \zeta' \subset \R^N$,
    $$\Corner(\zeta + \zeta') = \Corner(\zeta) + \Corner(\zeta').$$
    \item For any real number $m > 0$, and any set $\zeta \subset \R^N$,
    $$\Corner(m \zeta) = m \Corner (\zeta).$$
\end{enumerate}

Our next lemma is a version of the hyperplane separation theorem, a central tool in convex geometry, for the corner hull. It shows that membership in the corner hull can be tested by functionals with positive coordinates in the dual space.

\begin{lemma}[Positive hyperplane separation] \label{lem_hyperplane_sep}
Let $(\R^N)^\vee_+$ denote the nonnegative orthant in the dual space to $\R^N$, with respect to the dual basis. Let $(\Z^N)^\vee_+$ denote the set of standard lattice points within it.
\begin{enumerate}
    \item 
For any finite set $\zeta \subset \R^N$,
we have $0 \in \Corner( \zeta )$
if and only if there is no $r \in (\R^N)^\vee_+ \smallsetminus \{0\}$ such that, for all $w \in \zeta$,
$$r(w) > 0.$$
\item
For any finite set of lattice points $\zeta \subset \Z^N$,
we have $0 \in \Corner( \zeta )$
if and only if there is no lattice point $r \in (\Z^N)^\vee_+ \smallsetminus \{0\}$ such that, for all $w \in \zeta$,
$$r(w) > 0.$$
\item
For any finite set $\zeta \subset \R^N$, we have 
$0 \in \Corner( \zeta )^\circ$
if and only if there is no $r \in (\R^N)^\vee_+ \smallsetminus \{0\}$ such that, for all $w \in \zeta$,
$$r(w) \geq 0.$$
\item
For any finite set of lattice points $\zeta \subset \Z^N$, we have 
$0 \in \Corner( \zeta )^\circ$
if and only if there is no lattice point $r \in (\Z^N)^\vee_+ \smallsetminus \{0\}$ such that, for all $w \in \zeta$,
$$r(w) \geq 0.$$
\end{enumerate}
\end{lemma}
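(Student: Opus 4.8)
The plan is to treat all four parts as applications of the hyperplane separation theorem, together with two observations that adapt it to the corner hull. The first is that, since $\Conv(\zeta)$ is bounded, the nonnegative orthant $\R^N_+$ is precisely the recession cone of $\Corner(\zeta) = \Conv(\zeta) + \R^N_+$; consequently any linear functional that is bounded below on $\Corner(\zeta)$ must be nonnegative on $\R^N_+$, that is, must lie in $(\R^N)^\vee_+$. The second is that $\Corner(\zeta)$, being the Minkowski sum of the integral polytope $\Conv(\zeta)$ with the rational cone $\R^N_+$, is a rational polyhedron, so its facet-defining functionals may be taken integral. Throughout we may assume $\zeta \neq \emptyset$, the empty case being vacuous.

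For part (1), the reverse implication is immediate: if $r \in (\R^N)^\vee_+ \smallsetminus \{0\}$ satisfies $r(w) > 0$ for all $w \in \zeta$, then for any $x = c + u$ with $c \in \Conv(\zeta)$ and $u \in \R^N_+$ we have $r(x) = r(c) + r(u) \geq \min_{w \in \zeta} r(w) > 0$, so $0 \notin \Corner(\zeta)$. For the forward implication, if $0 \notin \Corner(\zeta)$ then, $\Corner(\zeta)$ being a nonempty closed convex set, strong separation yields a nonzero functional $r$ and a constant $\alpha > 0$ with $r \geq \alpha$ on $\Corner(\zeta)$; the recession-cone observation forces $r \in (\R^N)^\vee_+$, and $r(w) \geq \alpha > 0$ for each $w \in \zeta \subseteq \Corner(\zeta)$. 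Part (3) runs identically, applied to the open convex set $\Corner(\zeta)^\circ$, which is nonempty since $\Corner(\zeta)$ is full-dimensional: the separating hyperplane theorem produces a nonzero $r$ with $r \geq 0$ on $\Corner(\zeta)^\circ$, hence on its closure $\Corner(\zeta)$, and again $r \in (\R^N)^\vee_+$ with $r(w) \geq 0$ for all $w \in \zeta$. Conversely, if such an $r$ exists then $\Corner(\zeta) \subseteq \{x : r(x) \geq 0\}$, so the open set $\Corner(\zeta)^\circ$ lies in the open half-space $\{x : r(x) > 0\}$, which excludes the origin.

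The step requiring the most care is deducing the lattice statements (2) and (4); rather than rerunning the separation argument over $\Q$, I will extract the required integral functional from a facet of the rational polyhedron $\Corner(\zeta)$. One implication of each is trivial, since an integral $r$ is in particular real. For the other, write $\Corner(\zeta) = \bigcap_k \{x : \langle a_k, x \rangle \geq b_k\}$ with $a_k \in \Z^N$ and $b_k \in \Q$, discarding any trivial inequalities; each surviving $a_k$ is nonzero and, by the recession-cone observation, nonnegative, so $a_k \in (\Z^N)^\vee_+ \smallsetminus \{0\}$. If $0 \notin \Corner(\zeta)$, then some $b_k > 0$, and $r \colonequals a_k$ satisfies $\langle a_k, w \rangle \geq b_k > 0$ for all $w \in \zeta$, proving (2). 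If instead $0 \in \Corner(\zeta) \smallsetminus \Corner(\zeta)^\circ$, i.e.\ $0 \in \partial \Corner(\zeta)$, then some inequality is active at the origin, forcing $b_k = 0$ for that $k$, and $r \colonequals a_k$ satisfies $\langle a_k, w \rangle \geq 0$ for all $w \in \zeta$, proving (4). The only subtlety is to check that a nontrivial inequality survives and is active at the origin in the boundary case — a standard fact for rational polyhedra — after which everything reduces to unwinding the convex-geometry facts already assembled in this section.
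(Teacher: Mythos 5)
Your proposal is correct, and it follows the same basic strategy as the paper -- hyperplane separation adapted to the corner hull, plus rationality of $\Corner(\zeta)$ for the lattice statements -- but the mechanics differ in two places worth noting. For the real statements, the paper produces the separating functional as the dual of the shortest vector $w_0 \in \Corner(\zeta)$ and argues its nonnegativity by observing that any negative coordinate of $w_0$ could be increased to $0$ without leaving $\Corner(\zeta)$; you instead invoke generic strong (resp.\ open-set) separation and deduce nonnegativity of $r$ from the recession cone $\R^N_+$, which is cleaner and avoids the nearest-point construction altogether. For the lattice statements, the paper argues that the shortest vector is rational (because the facets of $\Corner(\zeta)$ are rational) and scales its dual; you read the integral functional directly off a rational facet description -- a violated inequality when $0 \notin \Corner(\zeta)$, an active one when $0 \in \partial\Corner(\zeta)$ -- which is essentially how the paper itself handles the boundary case in parts (3) and (4), so your treatment is arguably more uniform. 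Both routes rest on the same convex-geometry facts the paper lists (the Minkowski sum of an integral polytope and a rational cone is a rational polyhedron; a boundary point of a full-dimensional polyhedron lies on a facet). One small presentational point: in your case analysis for (4), the case $0 \notin \Corner(\zeta)$ also needs to be covered, and it is, by the functional you construct for (2), since $r(w) > 0$ implies $r(w) \geq 0$; you should say this explicitly rather than labeling that case as ``proving (2)'' only.
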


\begin{proof}
 \par(1)\enspace Suppose that $0 \in \Corner(\zeta)$. Then there exists some point $w_0 \in \Conv(\zeta)$ such that, for all $r \in (\R^N)^\vee_+,$
 $$0 \geq r(w_0).$$
 Functionals on $\Conv(\zeta)$ achieve their minima on an element of $\zeta$. So, for each $r \in (\R^N)^\vee_+$, there exists some $w \in \zeta$, depending on $r$, such that $r(w_0) \geq r(w)$. Thus
 $$0 \geq r(w).$$
 For the other direction, suppose that $0 \not\in \Corner(\zeta)$. There exists some shortest-length vector $w_0$ in $\Corner(\zeta)$. By the hyperplane separation theorem, the functional $r$ dual to $w_0$ takes only positive values on $\Corner(\zeta)$. In particular we know that $r$ takes only positive values on $\zeta$. The coordinates of $w_0$ are non-negative, since otherwise the negative coordinates could be replaced by 0 to obtain a shorter-length vector in $\Corner(\zeta)$. The coordinates of $w_0$ are rational, since the points of $\zeta$ are lattice points. Thus some positive integer multiple of $r$ is in $(\R^N)^\vee_+ \smallsetminus \{0\}.$
 
 \par(2)\enspace
 One direction follows immediately from (1). For the other direction, note that in the proof of (1), since $\Corner(\zeta)$ is the sum of an integral polytope and a rational polyhedral cone, all its facets are rational. Thus the shortest vector $w_0$ has rational coordinates, so a sufficiently high multiple of its dual is the desired lattice point $r$.
 
 \par(3)\enspace  Let $O^+$ be the nonnegative orthant and let $(O^+)^\circ$ be its interior. We have
 $$\Corner(\zeta)^\circ = (O^+)^\circ + \Conv(\zeta).$$
 Thus if $0 \in \Corner(\zeta)^\circ$, then there exists some point $w_0 \in \Conv(\zeta)$ such that, for all $r \in (\R^N)_+^\vee \smallsetminus \{0\}$,
 $$0 > r(w_0).$$
 Following the same argument as in (1), for each $r$ there exists some $w \in \zeta$ such that $r(w_0) \geq r(w)$, so $0 > r(w)$. 
 
 For the other direction, we already proved it in (1), except for the case that $0$ is on the boundary of $\Corner(\zeta)$. Then $0$ belongs to a facet of $\Corner(\zeta)$. Take $r$ to be the functional defining that facet's associated supporting halfspace.
 
 \par(4)\enspace Similar to (2) and (3); omitted.
\end{proof}

It turns out that, because we are working with $\SL_{N+1}$-actions, computing in the space $\R^{N+1}$ is redundant. All our computations happen in the hyperplane of vectors with coordinates that sum to 0. The next definitions set up a preferred basis for this hyperplane and its dual space.

Let $N \in \N$. We consider the lattice $\Z^{N+1}$ embedded in $W = \R^{N+1}$. We denote the standard basis vectors by $e_1, \hdots, e_{N+1}$. We denote the dual space of $\R^{N+1}$ by $R = (\R^{N+1})^\vee$. The space $R$ is spanned by the dual coordinates to the standard basis vectors, denoted $x_1, \hdots, x_{N+1}$.

Let $\bm{1} = (1, \hdots, 1) \in W$. Let $\bar{W}$ be the hyperplane orthogonal to $\bm{1}$. The elements of $W$ that belong to $\bar{W}$ are vectors with coordinates that sum to 0.

Let $\bar{R} = (\bar{W})^\vee$. We consider the basis $s_1, \hdots, s_N$ of $\bar{R}$ defined by
$$s_i = \sum_{j=1}^i x_j.$$
Let $\bar{R}^+$ denote the nonnegative $\R$-span of $s_1, \hdots, s_N.$

We work with this particular basis $s_1, \hdots, s_N$ because it has the following convenient property. There is a restriction map $R \to \bar{R}$ defined by restricting functionals from $W$ to $\bar{W}$. For any $r \in R$, we denote its restriction by $\bar{r}$. If $r = (r_1, \hdots, r_{N+1})$, we have $r_1 \geq \hdots \geq r_{D+1}$ if and only if $\bar{r} \in \bar{R}^+$. Further, if $r(\bm{1}) = 0$, then we have $\bar{r} = 0$ if and only if $r_1 = \hdots = r_{D+1} = 0.$

Let $\Orth$ denote the nonnegative orthant in $\bar{W}$ relative to the dual basis $s_1, \hdots, s_N$. Its interior $\OrthInt$ is the strictly positive orthant.

The \emph{corner hull} of a set $\zeta \subseteq \bar{W}$, denoted $\Corner(\zeta)$, is defined as the corner hull of the image of $\zeta$ in $\R^N$ via the identification $\bar{W} \cong \R^N$ from the preferred basis $s_1, \hdots, s_N$. That is,
$$\Corner(\zeta) = \Conv(\zeta) + \Orth.$$

\begin{defn} \label{def_corner_poly}
Consider a projective variety $V$ equipped with a $\SL_{N+1}$-linearized sheaf. Let $B$ be a basis of $k^{N+1}$. Let $W \cong \R^{N+1}$ be the vector space over the character lattice corresponding to basis $B$, and let $\bar{W}$ be the hyperplane of points of $Z$ with coordinates that sum to 0. Let $\Pi(v) \subset W$ denote the weight polytope of $v$ with respect to this basis. Let $\bar{\Pi}(v)$ denote the projection of $\Pi(v)$ to $\bar{W}$. The \emph{corner polyhedron} of $v \in V$, denoted $\Pi_+(v)$, is the corner hull 
$$\Corner(\bar{\Pi}(v)) \subseteq \R^N.$$
We call the vertices of $\Pi_+(v)$ the \emph{control points}. The set of control points is denoted $\Ctrl(v)$.
\end{defn}

We emphasize that the weight polytope $\Pi(v)$ is a subset of the vector space $W$, which has dimension $N + 1$, whereas the corner polyhedron $\Pi_+(v)$ is a subset of the subspace $\bar{W}$, which has dimension $N$.

Let $B$ be a basis, with vectors $b_1, \hdots, b_{N+1}$. We emphasize that $B$ is ordered. A 1-parameter subgroup of $\SL_{N+1}$ is said to be \emph{normalized along $B$} if, written in basis $B$, it is of the form $\diag [\tau^{r_1}, \tau^{r_2}, \hdots, \tau^{r_{N+1}}]$, where $r_1 \geq \hdots \geq r_{N+1}$. Every $1$-parameter subgroup of $\SL_{N+1}$ is normalized along some basis $B$ because we can permute basis vectors to get a new basis.

Let $V$ be a variety acted on by $\SL_{N+1}$, equipped with a linearized sheaf $\shL$. Given an ordered $B$, an element $v \in V$ is called \emph{$(B,\shL)$-stable} if there is no destabilizing 1-parameter subgroup of $\SL_{N+1}$ normalized along $B$. Similarly, a point $v$ is \emph{$(B,\shL)$-semistable} if there is no de-semistabilizing 1-parameter subgroup normalized along $B$.

Now we interpret the Hilbert-Mumford criterion in these terms. The idea to interpret the Hilbert-Mumford criterion with the weight polytope is standard; see \cite[Chapter 9]{MR2004511}. The original contribution in Proposition \ref{prop_corner_numerical} is that the corner hull takes advantage of the symmetry of $\SL_{N+1}$.

\begin{prop} \label{prop_corner_numerical}
Suppose $\SL_{N+1}$ acts on a projective variety $V$ via the sheaf $\shL$. Let $B$ be a basis of $k^{N+1}$. Then $v \in V$ is $(B,\shL)$-semistable if and only if
$$ 0 \in \Pi_+(v).$$
The point $v \in V$ is $(B,\shL)$-stable if and only if
$$ 0 \in \Pi_+(v)^\circ.$$
\end{prop}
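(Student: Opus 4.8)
The plan is to translate the $B$-(semi)stability condition into a statement about functionals via the Hilbert–Mumford criterion, then observe that these functionals are exactly the ones appearing in the positive hyperplane separation lemma (Lemma \ref{lem_hyperplane_sep}) applied to the weight set. First I would recall that $v$ is $B$-semistable iff there is no de-semistabilizing $1$-parameter subgroup along $B$, i.e., no $\ell = \diag[\tau^{r_1},\ldots,\tau^{r_{N+1}}]$ with $r_1 \geq \cdots \geq r_{N+1}$, $\sum r_i = 0$, and $\mu(v,\ell) > 0$. Writing the weight set $\wt(v) \subset \Z^{N+1}$ in the basis $B$-aligned to $B$, the numerical invariant is $\mu(v,\ell) = \min\{\, r \cdot w : w \in \wt(v)\,\}$, where $r = (r_1,\ldots,r_{N+1})$. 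So $v$ is $B$-semistable iff there is no such descending $r$ with $r \cdot w > 0$ for all $w \in \wt(v)$.

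Next I would pass to the hyperplane $\bar{W}$. Because every $\ell$ along $B$ satisfies $\sum r_i = 0$, the relevant functionals $r$ lie in the hyperplane $r(\bm{1}) = 0$ of $R$; by the property of the preferred basis $s_1,\ldots,s_N$ recorded just before Definition \ref{def_corner_poly}, the restriction $\bar r$ lies in $\bar R^+$ exactly when $r_1 \geq \cdots \geq r_{N+1}$, and $\bar r = 0$ exactly when $r = 0$. Moreover $r \cdot w$ depends only on the projection $\bar w \in \bar W$ of $w$ (since $r$ annihilates $\bm 1$), so the condition ``$r \cdot w > 0$ for all $w \in \wt(v)$'' is the same as ``$\bar r(\bar w) > 0$ for all $\bar w \in \bar\Pi(v)$,'' and it suffices to test this on the (finitely many) vertices of $\bar\Pi(v)$, equivalently on any finite generating set of $\bar\Pi(v)$. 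Thus $v$ is $B$-semistable iff there is no $\bar r \in \Orth \setminus \{0\}$ with $\bar r(\bar w) > 0$ for all $\bar w$ in a finite set whose convex hull is $\bar\Pi(v)$. Identifying $\bar W \cong \R^N$ via $s_1,\ldots,s_N$, so that $\Orth$ becomes the nonnegative orthant of the dual and $\Pi_+(v) = \Corner(\bar\Pi(v))$, Lemma \ref{lem_hyperplane_sep}(1) gives exactly $0 \in \Pi_+(v)$. The stable case is identical, replacing strict inequalities with non-strict ones in the $1$-parameter subgroup condition (``$\mu(v,\ell) \geq 0$'' is what must fail for a destabilizing $\ell$), and invoking Lemma \ref{lem_hyperplane_sep}(3) to conclude $0 \in \Pi_+(v)^\circ$.

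The only genuine subtlety — and the step I would be most careful with — is checking that restricting attention to $1$-parameter subgroups \emph{along} $B$ (rather than all of $\SL_{N+1}$) matches up correctly with allowing \emph{all} nonzero functionals in the orthant $\Orth$: one must confirm that every descending integer vector $r$ with $\sum r_i = 0$ arises from an honest $1$-parameter subgroup of $\SL_{N+1}$ (clear, since the lattice of cocharacters of the diagonal torus is exactly $\{r \in \Z^{N+1} : \sum r_i = 0\}$), and conversely that scaling a rational functional from Lemma \ref{lem_hyperplane_sep} to an integer vector does not change whether the separating inequality holds. Both are routine, but they are the hinge that makes the corner polyhedron — defined purely convex-geometrically — detect $B$-(semi)stability. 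Everything else is bookkeeping with the pairing $\langle\cdot,\cdot\rangle$ and the fact that a linear functional on a polytope attains its minimum at a vertex.
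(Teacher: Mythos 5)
Your proposal is correct and follows essentially the same route as the paper: the Hilbert--Mumford criterion, restriction of the pairing to $\bar{W}$, the identification of descending one-parameter subgroups along $B$ with the nonzero elements of $\bar{R}^+$, and then the positive hyperplane separation lemma. The only difference is that you invoke the real versions, parts (1) and (3) of Lemma \ref{lem_hyperplane_sep}, and handle the lattice-versus-real bridging by hand, whereas the paper cites the lattice versions (2) and (4), which package exactly that step.
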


\begin{proof}
 One checks $(B,\shL)$-semistability by computing the numerical invariant $\mu(v, \ell)$ for every 1-parameter subgroup $\ell$ normalized along $B$. We have
 $$\mu(v, \ell) = \min_{w \in \wt(v)} r(\ell)(w).$$

 For any $w \in W$, let $\bar{w}$ denote the projection of $w$ to $\bar{W}$. For any $\ell$, the vector $r(\ell)$ is orthogonal to $(1, \hdots, 1)$. So, for any $\ell$ and $w$, we have
 $$r(\ell)(w) = r(\ell)(\bar{w}) = \bar{r}(\ell)(\bar{w}).$$
 As $\ell$ ranges over $1$-parameter subgroups along $B$, the vector $\bar{r}(\ell)$ ranges over nonzero lattice points in $\bar{R}^+$.
 
 Thus $v$ is $(B,\shL)$-semistable if and only if, for all nonzero lattice points $\bar{r} \in \bar{R}^+$, there exists $w \in \wt(v)$ such that
 $$ \bar{r} ( \bar{w} ) \leq 0.$$
 By the choice of basis $s_1, \hdots, s_N$ on $\bar{R}$ and Lemma \ref{lem_hyperplane_sep} (2), this is equivalent to
 $$0 \in \Pi_+(v).$$
 The $(B,\shL)$-stability claim follows from the same argument, but applying Lemma \ref{lem_hyperplane_sep} (4) instead.
\end{proof}

\subsection{First application: configurations of points} \label{sec_first_application}

In this section, we show how to analyze the diagonal action of $\SL_{N+1}$ on $(\PP^N)^n$ with the corner polyhedron; here, no linear map $T$ is involved. These results are essential for our proof of Theorem \ref{thm_main}, and we recover a classical result of Mumford as Theorem \ref{thm_classic_git} along the way.

For any $m \in \N$, let $[m] \subset \N$ denote the set $\{1, \hdots, m\}$.

We first compute the corner polyhedron associated to a given basis for a single point in $\PP^N$; it is just a translate of the nonnegative orthant.

\begin{prop} \label{prop_one_point}
    Consider the action of $\SL_{N+1}$ on $\PP^N$, with $\shL = \shO(m)$ for some $m \geq 1$. Choose a basis $B$ of $k^{N+1}$, let $X_1, \hdots, X_{N+1}$ be induced coordinates on $\PP^N$, and let $B'$ be the induced $B$-aligned basis from the Segre map. Given a point $v \in \PP^N$, Let $i$ be the largest index such that $X_i(v) \neq 0$. Then
    $$\Ctrl(v, \shL) = \left\{ m \cdot \left( -\frac{1}{N+1}, \hdots, -\frac{i - 1}{N+1}, 1 - \frac{i}{N+1}, \hdots, 1 - \frac{N}{N+1} \right) \right\}.$$
\end{prop}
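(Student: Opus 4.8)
The plan is to compute the weight polytope and its corner hull directly and then recognize the corner polyhedron as a translate of the simplicial cone $\Orth$, whose single vertex is read off in the $s_1,\dots,s_N$ coordinates. A first reduction: by Lemma~\ref{lem_veronese} and property~(3) of the corner hull (and since orthogonal projection onto $\bar W$ is linear), $\Pi_+(v)$ for $\shO(m)$ equals $m$ times $\Pi_+(v)$ for $\shO(1)$, and vertices scale accordingly; so it suffices to treat $\shL=\shO(1)$ and multiply the final vector by $m$.

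Next I would record the weight data. For $\shO(1)$ the $B$-aligned basis is $X_1,\dots,X_{N+1}$, and the standard weight computation for the $\SL_{N+1}$-action on $\PP^N$ gives $\wt(X_j)=e_j$. (For $\shO(m)$ the $B$-aligned monomial basis has $\wt(X^a)=a$ with $\sum_j a_j=m$, matching Lemma~\ref{lem_veronese}.) Writing $S=\{j:X_j(v)\neq 0\}$ and $i=\max S$, the weight set of $v$ is $\{e_j:j\in S\}$, hence $\Pi(v)=\Conv\{e_j:j\in S\}$; projecting to $\bar W$ shifts by $-\tfrac1{N+1}\bm 1$, so $\bar\Pi(v)=\Conv\{\bar e_j:j\in S\}$ where $\bar e_j:=e_j-\tfrac1{N+1}\bm 1\in\bar W$.

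The crux is the identity $\Pi_+(v)=\bar e_i+\Orth$. Since $\Orth$ is the nonnegative orthant of $\bar W$ with respect to the basis dual to $s_1,\dots,s_N$, and $s_l(w)=\sum_{t\le l}w_t$, we have $\Orth=\{w\in\bar W:\sum_{t\le l}w_t\ge 0\text{ for }l=1,\dots,N\}$. For each $j\in S$ we have $j\le i$, so every partial sum of $e_j-e_i$ equals $[l\ge j]-[l\ge i]$, which is $0$ or $1$; hence $e_j-e_i\in\Orth$, and therefore $\bar e_j=\bar e_i+(e_j-e_i)\in\bar e_i+\Orth$. By convexity $\bar\Pi(v)\subseteq\bar e_i+\Orth$, so $\Pi_+(v)=\bar\Pi(v)+\Orth\subseteq\bar e_i+\Orth$; the reverse inclusion is clear because $\bar e_i\in\bar\Pi(v)$. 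As $\Orth$ is the nonnegative span of a basis of $\bar W$, it is a pointed simplicial cone with apex $0$, so $\bar e_i+\Orth$ has the single vertex $\bar e_i$, i.e.\ $\Ctrl(v)=\{\bar e_i\}$. Finally, applying $w\mapsto(s_1(w),\dots,s_N(w))$ to $\bar e_i$ gives $s_l(\bar e_i)=\sum_{t\le l}(\delta_{it}-\tfrac1{N+1})$, which is $-\tfrac l{N+1}$ for $l<i$ and $1-\tfrac l{N+1}$ for $l\ge i$; this is exactly the vector in the statement (times $m$ in general).

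The computation is short, so the only delicate point is fixing the conventions correctly: that $\wt(X_j)=e_j$ rather than $-e_j$ under the paper's sign convention for $\mu$, and that $\Orth$ is precisely the cone of vectors in $\bar W$ whose partial sums are all nonnegative — the fact that makes $e_j-e_i\in\Orth$ (for $j\le i$) immediate. With those pinned down, the one genuine step is the containment $\bar\Pi(v)\subseteq\bar e_i+\Orth$.
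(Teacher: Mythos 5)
Your proposal is correct and follows essentially the same route as the paper: reduce to $m=1$ via Lemma~\ref{lem_veronese}, compute $\wt(X_j)=e_j$, project to $\bar W$, and observe that every weight of $v$ lies in the translated orthant $\barwt(X_i)+\Orth$, so the corner polyhedron is exactly that translate and its unique vertex is $\barwt(X_i)$. Your explicit verification that $e_j-e_i\in\Orth$ for $j\le i$ (via nonnegative partial sums) is the same computation the paper performs with the functionals $s_l$, just phrased on the primal side, and your coordinate evaluation of the vertex matches the stated formula.
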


\begin{proof}
 Assume $m = 1$, since then the general case follows from Lemma \ref{lem_veronese}.
 
 Let $\ell$ be a 1-parameter subgroup along a basis $B$, with $r(\ell) = (r_1, \hdots, r_{N+1})$. Let $X_1, \hdots, X_{N+1}$ be generators of $\shO(1)$ obtained from $B$. Then, for all $i \in [N+1]$, the weight of $X_i$ is the standard basis vector
$$\wt(X_i) = e_i.$$
 Therefore, we have
 \[
 s_j(\wt(X_i)) =
 \begin{cases}
  0 &  \text{ if } j < i, \\
  1 & \text{ otherwise.} \\
 \end{cases}
 \]
 Since the sum of the entries of $\wt(X_i)$ is 1, we calculate
 $$\barwt(X_i) = \wt(X_i) - \frac{1}{N+1} \bm{1}.$$
 We have $s_j(\bm{1}) = j$. This gives us
  \[
 s_j(\barwt(X_i)) =
 \begin{cases}
  -\frac{j}{N+1} &  \text{ if } j < i, \\
  1 - \frac{j}{N+1} & \text{ otherwise.} \\
 \end{cases}
 \]
 Thus, for all $(i, i', j)$ such that $i < i'$, we have 
 $$s_j(\barwt(X_{i})) \leq s_j(\barwt(X_i')).$$
 It follows that every element of $\barwt(v)$ belongs to the nonnegative orthant at $\barwt(X_i)$, where $i$ is the largest index for which $X_i(v) \neq 0$.
\end{proof}

Let $\bm{m} = (m_1, \hdots, m_n)$ be an $n$-tuple of positive integer weights. Each such vector determines a very ample sheaf $\shO(\bm{m})$ on $(\PP^N)^n$. For present and later use, we define a vector
\begin{equation} \label{eq_config_vector}
    \eta(v) := \left(
    \sum_{i=1}^n \frac{-j m_i}{N+1} + \sum_{i \in [n]: \; X_{j+1}(v_i) = \hdots = X_{N+1}(v_i) = 0} m_i
    \right)_{j \in [N]}.
\end{equation}

We now extend the previous proposition to ordered configurations of points in $\PP^N$.

\begin{prop} \label{prop_tuple}
    Consider the diagonal action of $\SL_{N+1}$ on $(\PP^N)^n$, with $\shL = \shO(\bm{m})$. Choose a basis $B$ of $k^{N+1}$, and let $B'$ be the induced $B$-aligned basis from the Segre map. Then, for each $v = (v_1, \hdots, v_n) \in (\PP^N)^n.$
    \[
    \Ctrl(v, \shL) = \{ \eta(v) \}.
    \]
\end{prop}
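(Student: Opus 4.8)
The plan is to combine Proposition~\ref{prop_one_point} with Lemma~\ref{lem_segre} applied to the $n$-fold product $(\PP^N)^n = \PP^N \times \cdots \times \PP^N$, translated into the language of corner polyhedra. First I would recall that under the $B$-aligned Segre basis on $\shO(\bm{m})$, Lemma~\ref{lem_segre} (iterated $n$ times, together with Lemma~\ref{lem_veronese} to account for the weights $m_i$) gives that the weight polytope of $v = (v_1, \ldots, v_n)$ is the Minkowski sum $\Pi(v) = \sum_{i=1}^n m_i \Pi(v_i)$, where $\Pi(v_i)$ is the weight polytope of the single point $v_i \in \PP^N$ with respect to $\shO(1)$. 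Projecting to $\bar W$ and taking corner hulls, property (2) of the corner hull (that $\Corner(\zeta + \zeta') = \Corner(\zeta) + \Corner(\zeta')$) and property (3) ($\Corner(m\zeta) = m\Corner(\zeta)$ for $m > 0$) yield
\[
\Pi_+(v) = \Corner\!\left( \sum_{i=1}^n m_i \bar\Pi(v_i) \right) = \sum_{i=1}^n m_i \Pi_+(v_i).
\]

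Next I would invoke Proposition~\ref{prop_one_point}, which says each $\Pi_+(v_i)$ is a single translate of the orthant $\Orth$: its unique vertex (control point) is the vector $c(v_i)$ whose $s_j$-coordinate is $-\tfrac{j}{N+1}$ for $j < \iota_i$ and $1 - \tfrac{j}{N+1}$ for $j \geq \iota_i$, where $\iota_i$ is the largest index with $X_{\iota_i}(v_i) \neq 0$. Since the Minkowski sum of translated orthants $(c_i + \Orth)$ is again a translated orthant, namely $\left(\sum_i m_i c_i\right) + \Orth$, the corner polyhedron $\Pi_+(v)$ has a single vertex, located at $\sum_{i=1}^n m_i c(v_i)$. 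It then remains to check that this sum is exactly $\eta(v)$: the $s_j$-coordinate of $m_i c(v_i)$ equals $-\tfrac{j m_i}{N+1}$ if $j < \iota_i$ and $m_i(1 - \tfrac{j}{N+1})$ if $j \geq \iota_i$; but $j \geq \iota_i$ is precisely the condition $X_{j+1}(v_i) = \cdots = X_{N+1}(v_i) = 0$, so summing over $i$ reproduces the formula~\eqref{eq_config_vector} after separating the common term $\sum_i \tfrac{-jm_i}{N+1}$.

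The only genuinely nontrivial point — and the step I would write out most carefully — is the bookkeeping that identifies the index condition "$j \geq \iota_i$" with the vanishing condition "$X_{j+1}(v_i) = \cdots = X_{N+1}(v_i) = 0$" appearing in~\eqref{eq_config_vector}, and the verification that Minkowski-summing translated copies of $\Orth$ gives a translated copy of $\Orth$ with vertex the sum of the individual vertices (this is immediate from the definition of Minkowski sum, since $\Orth + \Orth = \Orth$). Everything else is a direct chaining of Lemmas~\ref{lem_segre} and~\ref{lem_veronese}, the corner-hull arithmetic, and Proposition~\ref{prop_one_point}. I expect no real obstacle; the proof is a short assembly of already-established facts.
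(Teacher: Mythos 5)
Your proposal is correct and follows essentially the same route as the paper: reduce to the single-point case (Proposition~\ref{prop_one_point}, with Lemma~\ref{lem_veronese} handling the weights $m_i$), then use Lemma~\ref{lem_segre} and the corner-hull arithmetic to see that the control set is the single vector obtained by summing the individual control points, which is $\eta(v)$. The paper's proof is just a terser version of this; your added details (the $\Orth+\Orth=\Orth$ observation and the identification of $j\ge\iota_i$ with the vanishing condition in \eqref{eq_config_vector}) are accurate fillings-in, not a different argument.
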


\begin{proof}
The case $n = 1$ is the result of Proposition \ref{prop_one_point}. By Lemma \ref{lem_segre}, the set $\Ctrl(v, \shL)$ consists of just one vector, and we can compute it as the sum, for $1 \leq i \leq n$, of the control vector of $v$ relative to $\shO(m_i)$. The result follows.
\end{proof}

As a corollary, we obtain Mumford's classical description of GIT stability of projective configurations, which serves as the template for Theorem \ref{thm_main}. We note that the proof we give here is essentially the proof that can be found in \cite[Chapter 11]{MR2004511}, translated into the language of convex geometry.

\begin{thm}[Mumford, \cite{MR1304906}] \label{thm_classic_git}
    Consider the diagonal action of $\SL_{N+1}$ on $(\PP^N)^n$, with $\shL = \shO(\bm{m})$. Then $v = (v_1, \hdots, v_n)$ is semistable if and only if, for every nonzero proper subspace $H \subset k^{N+1}$, we have
    \begin{equation} \label{eq_mum_classic}
    \sum_{i \in [n] \colon v_i \in H} m_i \leq \frac{\dim H}{N + 1} \left( \sum_{i=1}^n m_i \right).
    \end{equation}
    The configuration $v$ is stable if and only if the inequality \eqref{eq_mum_classic} is strict.
\end{thm}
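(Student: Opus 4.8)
The plan is to read this off directly from the corner polyhedron computations already in hand. By the Hilbert--Mumford numerical criterion (Theorem~\ref{thm_numcrit}), $v=(v_1,\ldots,v_n)$ is GIT semistable if and only if $\mu(v,\ell)\le 0$ for every one-parameter subgroup $\ell$ of $\SL_{N+1}$, and stable if and only if every such inequality is strict. Since every one-parameter subgroup is along some ordered basis $B$ of $k^{N+1}$ (reorder the basis vectors), this is equivalent to $B$-semistability (resp.\ $B$-stability) for \emph{every} ordered basis $B$. So I would fix an arbitrary ordered basis $B$, take the $B$-aligned Segre basis of $\shO(\bm m)$, and invoke Proposition~\ref{prop_corner_numerical}: $B$-semistability is $0\in\Pi_+(v)$, and $B$-stability is $0\in\Pi_+(v)^\circ$.

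The second step is to compute $\Pi_+(v)$. Proposition~\ref{prop_tuple} gives $\Ctrl(v,\shL)=\{\eta(v)\}$, so the corner polyhedron has a single vertex and $\Pi_+(v)=\eta(v)+\Orth$ is just a translate of the nonnegative orthant. Therefore $0\in\Pi_+(v)$ if and only if $-\eta(v)\in\Orth$, i.e.\ every coordinate of $\eta(v)$ (in the preferred basis $s_1,\ldots,s_N$) is $\le 0$; and $0\in\Pi_+(v)^\circ$ if and only if every coordinate of $\eta(v)$ is $<0$.

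The third step is pure bookkeeping with the formula~\eqref{eq_config_vector}. Its $j$-th coordinate is
$$-\frac{j}{N+1}\sum_{i=1}^n m_i \;+\; \sum_{\substack{i\in[n]\\ X_{j+1}(v_i)=\cdots=X_{N+1}(v_i)=0}} m_i,$$
and the index set of the second sum is exactly $\{\,i: v_i\in H\,\}$, where $H=\Span(b_1,\ldots,b_j)$ is the $j$-dimensional coordinate subspace of $B$. So ``the $j$-th coordinate of $\eta(v)$ is $\le 0$'' says precisely
$$\sum_{i:\,v_i\in H} m_i \;\le\; \frac{\dim H}{N+1}\sum_{i=1}^n m_i .$$
As $B$ runs over all ordered bases and $j$ over $[N]$, the subspace $H$ runs over all nonzero proper subspaces of $k^{N+1}$, so the conjunction of all these inequalities is exactly~\eqref{eq_mum_classic}. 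Replacing $\le$ by $<$ throughout yields the stable case.

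There is no real obstacle remaining: Propositions~\ref{prop_corner_numerical} and~\ref{prop_tuple} carry the weight, and what is left is to keep the conventions straight---chiefly, that the orientation of $\Orth$ with respect to $s_1,\ldots,s_N$ makes ``$0\in\eta(v)+\Orth$'' mean ``all coordinates of $\eta(v)$ are $\le 0$'', and that every nonzero proper subspace of $k^{N+1}$ occurs as a coordinate subspace for some ordered basis. If anything is delicate it is only checking that these sign and indexing conventions match those used to define $\eta(v)$ in~\eqref{eq_config_vector}.
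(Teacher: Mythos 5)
Your proposal is correct and follows essentially the same route as the paper: reduce to $B$-semistability for every ordered basis via Proposition~\ref{prop_corner_numerical}, use Proposition~\ref{prop_tuple} to see that $\Pi_+(v)=\eta(v)+\Orth$ is a translated orthant, read off the sign conditions on the coordinates of $\eta(v)$ from \eqref{eq_config_vector}, and note that the coordinate subspaces sweep out all nonzero proper subspaces as $B$ varies (with strict inequalities for stability). The conventions you flag as delicate are handled exactly as you describe, so nothing further is needed.
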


\begin{proof}
By Proposition \ref{prop_corner_numerical}, the point $v$ is semistable if and only if, in each basis $B$ of $k^{N+1}$, there is a $B$-aligned basis $B'$ of $\shL$ such that $0 \in \Pi_+(v)$. For any given basis $B$, we let $B'$ be the natural one coming from the Segre and Veronese embeddings (Proposition \ref{prop_tuple}). By Proposition \ref{prop_tuple}, a point $v$ is $(B,\shL)$-semistable if and only if each entry of $\eta(v)$ is nonpositive. So, for each $j \in [N]$, the $j$-th coordinate of of $\eta(v)$ imposes the condition
\[
     \sum_{i \in [n]: \; X_{j+1}(v_i) = \hdots = X_{N+1}(v_i) = 0} m_i \leq \sum_{i=1}^n \frac{j m_i}{N+1}.
\]
Let $H_j = \Span(X_1, \hdots, X_j)$, viewed as a subspace of $k^{N+1}$. Then $j = \dim H_j$, and the condition $X_{j+1}(v_i) = \hdots = X_{N+1}(v_i) = 0$ is equivalent to $v \in H_j$. Therefore, $(B,\shL)$-semistability is equivalent to satisfying \eqref{eq_mum_classic} for the sequence of subspaces $H_1, \hdots, H_N$ coming from basis $B$. Every nontrivial proper subspace appears this way for some basis $B$, so the result follows.

For stability, the same argument works using strict inequalities.
\end{proof}
 
\subsection{Corner polyhedra of type \texorpdfstring{$A_N$}{A N}} Having understood stability of configurations of marked points, we turn to the complementary problem: describing corner polyhedra of unmarked linear maps.

The main result of this subsection, Proposition \ref{prop_vertices}, calculates the vertices of the corner polyhedron associated to any matrix $M$. We use this description to classify corner polyhedra (Corollary \ref{cor_bijection}) in terms of a combinatorial gadget called the \emph{profile} of a matrix (Definition \ref{def_profile}). We encourage the reader to consult Section \ref{sect_examples} for examples to see the general theory in action.

Consider the action of $\SL_{N+1}$ by conjugation on $\PE$. Let $B$ be some basis of $k^{N+1}$. We get an induced basis of the sheaf $\shO(1)$ on $\PE$ by considering the $(N + 1)^2$ matrix coordinates in basis $B$; we denote these coordinates by $a_{ij}$, where $i, j \in [N+1].$ Any 1-parameter subgroup $\ell$ that is diagonal in $B$ has an associated vector $r(\ell) = (r_1, \hdots, r_{N+1})$, and we have
$\ell(\tau) \cdot a_{ij} = \tau^{r_i - r_j}$. It follows that, with the choice of sheaf $\shL = \shO(1)$,
$$\wt(a_{ij}) = e_i - e_j.$$

Notice that these weights $\wt(a_{ij})$ are roots of the $A_N$ lattice.

\begin{defn}
A \emph{root polytope of type $A_N$} is the convex hull of a nonempty set of vectors of the form $e_i - e_j$, where $i, j \in [N+1]$. Note that we allow $i = j$. The \emph{full root polytope of type $A_N$} is the root polytope defined with all possible vectors $e_i - e_j$.
\end{defn}
The combinatorics of root polytopes is known in detail; see \cite{MR2801233, MR1697418} and especially \cite{MR4310906}. Full root polytopes are orbit polytopes, since their vertices are the orbit of $(1,-1,0,\hdots,0)$ by the symmetric group $S_{N+1}$.

We introduce the analogous definition for corner polyhedra.
\begin{defn} \label{def_corner}
A \emph{corner polyhedron of type $A_N$} is the corner hull $\Corner(\zeta)$ of a finite, nonempty set $\zeta$ of vectors of the form $e_i - e_j$, where $i, j \in [N+1]$. Note that we allow $i = j.$
\end{defn}

By Definition \ref{def_corner_poly}, the corner polyhedra of type $A_N$ are exactly the polyhedra of the form $\Pi_+(M)$, for some $M \in \PP(\Mat_{N+1})$.

To compute the vertices and faces of corner polyhedra, it is convenient to work in the hyperplane of vectors in $\R^{N+1}$ with coordinates that sum to 0, for which we have a preferred basis. We recall our notation:
\begin{itemize}
    \item $W$, the $\R$-vector space over the character lattice in basis $B$, identified with $\R^{N+1}$ by the basis $e_1, \hdots, e_{N+1}$.
    \item $\bar{W} \subset W$, the hyperplane orthogonal to $\bm{1}$.
    \item $R$, the dual space to $W$, identified with $\R^{N+1}$ by the basis $x_1, \hdots, x_{N+1}$.
    \item $s_i$, for each $i \in [N]$, defined as the functional $x_1 + \hdots + x_i$.
    \item For any $w \in W$, we write $\bar{w}$ for the projection of $w$ to $\bar{W}$, identified with $\R^N$ by the dual basis to $s_1, \hdots, s_N$.
    \item $\bar{R}$, the dual space of $\bar{W}$, identified with $\R^N$ by the basis $s_1, \hdots, s_N \in \bar{R}$.
    \item $\Pi_+(M)$, the corner polyhedron associated to $M$.
\end{itemize}

Our next lemma calculates weights in the basis $s_1,\ldots,s_n$. Note that the trichotomy that appears here (\emph{on} vs. \emph{below} vs. \emph{above} the main diagonal) persists through the rest of our argument, and ultimately leads to the definitions of Type I, II, and III flags in Theorem \ref{thm_main}.

\begin{lemma} \label{lem_root_weights}
Let $a_{ij}$ be an entry coordinate of $\Mat_{N+1}$. Then $\barwt(a_{ij})$ depends on the $a_{ij}$ location relative to the main diagonal, as follows:
\begin{enumerate}
    \item If $i = j$, then $\barwt(a_{ij}) = 0$.
    \item If $i > j$, then
    $$\barwt(a_{ij}) = (0, \hdots, 0, -1, \hdots, -1, 0, \hdots, 0),$$
    where the $-1$-block runs from $j$ to $i - 1$ inclusive.
    \item If $i < j$, then
    $$\barwt(a_{ij}) = (0, \hdots, 0, 1, \hdots, 1, 0, \hdots, 0),$$ where the 1-block runs from $i$ to $j - 1$ inclusive.
\end{enumerate}
\end{lemma}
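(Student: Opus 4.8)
The plan is to reduce the whole statement to the already-established identity $\wt(a_{ij}) = e_i - e_j$ in $W = \R^{N+1}$ and then read off the coordinates of its image in $\bar{W}$ in the preferred basis. Recall that $\bar{W}$ is the hyperplane orthogonal to $\bm{1}$, identified with $\R^N$ by the basis dual to $s_1, \dots, s_N$, so that a vector $\bar{w} \in \bar{W}$ is recorded as the tuple $\bigl(s_1(\bar{w}), \dots, s_N(\bar{w})\bigr)$, where $s_k = x_1 + \cdots + x_k$ by definition.

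First I would dispose of the projection step. When $i = j$ we have $\wt(a_{ii}) = e_i - e_i = 0$, which proves (1) immediately. When $i \neq j$, the vector $e_i - e_j$ has coordinate sum zero, hence already lies in $\bar{W}$; since the projection $W \to \bar{W}$ is orthogonal projection along $\bm{1}$ (the same map yielding $\barwt(X_i) = \wt(X_i) - \tfrac{1}{N+1}\bm{1}$ in the proof of Proposition \ref{prop_one_point}), it fixes $e_i - e_j$. So in every case $\barwt(a_{ij}) = e_i - e_j$, and it remains only to evaluate the functionals $s_k$ on this vector.

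Next, since $s_k(e_m)$ equals $1$ if $m \leq k$ and $0$ otherwise, we get $s_k(e_i - e_j) = \mathbf{1}\{i \le k\} - \mathbf{1}\{j \le k\}$. If $i > j$, this difference equals $-1$ exactly when $j \le k \le i-1$ and vanishes otherwise, which is precisely the $-1$-block of case (2); if $i < j$, it equals $+1$ exactly when $i \le k \le j-1$ and vanishes otherwise, which is the $1$-block of case (3). This completes the computation.

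There is no real obstacle: the lemma is pure bookkeeping. The single point deserving care is the half-open nature of the index ranges — the block of $\pm 1$'s ends at $i-1$ (respectively $j-1$), not at $i$ (respectively $j$). It is worth stating this cleanly because, as the preceding remark anticipates, this off-by-one is the origin of the index shifts recurring throughout the rest of the combinatorial argument and, ultimately, of the Type I/II/III trichotomy.
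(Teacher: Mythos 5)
Your proof is correct and follows essentially the same route as the paper: observe that $\wt(a_{ij}) = e_i - e_j$ already lies in $\bar{W}$ (so the projection is the identity on it), then evaluate the functionals $s_1, \hdots, s_N$ to locate the block of $\pm 1$'s. Your explicit formula $s_k(e_i - e_j) = \mathbf{1}\{i \le k\} - \mathbf{1}\{j \le k\}$ just spells out the bookkeeping the paper leaves implicit, and your index ranges are right.
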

\begin{proof}
First, notice that $\wt(a_{ij}) = e_i - e_j$ is in $\bar{W}$ already, so $\barwt(a_{ij}) = \wt(a_{ij})$. The result follows from evaluating the functionals $s_1, \hdots, s_N$ on $\barwt{(a_{ij})}$.
\end{proof}

We now discuss the simplest examples in dimension $N = 1, 2, 3$. We include a catalogue of more elaborate examples in Section \ref{sect_examples}.
\begin{example}
Let $M \in \Mat_{N+1}$ be a matrix with all entries nonzero. Then the weight polytope $\Pi(M)$ is the full root polytope of type $A_N$.
\begin{itemize}
    \item If $N = 1$, then $\Pi(M)$, viewed as a subset of $W = \R^2$, is the line segment from $(-1,1)$ to $(1,-1)$. In the preferred basis $s_1$ on $\bar{W}$, the polytope $\Pi(M)$ is the interval $[-1, 1]$. The corner polyhedron $\Pi_+(M)$ is the ray $[-1, \infty)$.
    \item If $N = 2$, then $\Pi(M)$ is a regular hexagon in $W = \R^3$. Its vertices in $W$ are
    \[
    (-1,0,1), (-1,1,0), (0,-1,1), (1,0,-1), (1, -1, 0), (0,1,-1).
    \]
    Note that $(0,0,0) \in \wt(M)$ is not a vertex of $\Pi(M)$ because it is in the interior. In the basis $s_1, s_2$ on $\bar{W}$, the vertices of $\Pi(M)$ are
\[
(-1,-1), (-1,0), (0,-1), (1,1), (1,0), (0,1).
\]
    We see from this description that the corner polyhedron $\Pi_+(M)$ is the nonnegative orthant translated by $(-1,-1)$.

\item If $N = 3$, then $\Pi(T)$ is a cuboctahedron. In $W$, its vertices are the 12 vectors with entries $0, 0, -1, 1$ in some order. In $\bar{W}$, its vertices are
\begin{align*}
&(-1,-1,-1), (-1,-1,0),(0,-1,-1), (-1,0,0), (0,-1,0), (0,0,-1), \\
&(1,1,1), (1,1,0),(0,1,1), (1,0,0), (0,1,0), (0,0,1).
\end{align*}
Thus $\Pi_+(M)$ is the nonnegative orthant translated by $(-1,-1,-1)$.
\end{itemize}
\end{example}

As the examples show, the corner polyhedron $\Pi_+(M)$ is generally much simpler than the weight polytope $\Pi(M)$.

We define a partial order on the coordinates of $\PE$ that will help us identify the vertices of the corner polyhedron.

\begin{defn} \label{def_outweighing}
Given $(i,j), (i',j') \in [N+1]^2$, we say $(i, j)$ \emph{outweighs} $(i', j')$ if $\barwt(a_{i'j'}) \in \barwt(a_{ij}) + \Orth$ and $\barwt(a_{i'j'}) \neq \barwt(a_{ij})$. Equivalently, entry $(i, j)$ outweighs $(i', j')$ if $\barwt(a_{i'j'}) \neq \barwt(a_{ij})$ and, for each functional $s = s_1, \hdots, s_N$, we have
$$s(\barwt(a_{ij})) \leq s(\barwt(a_{i'j'})).$$

The outweighing relation is a strict partial order on $[N+1]^2$. That is, outweighing is irreflexive, transitive and antisymmetric.

A \emph{minimal element} with respect to outweighing is an entry that is not outweighed by any other entries. To describe $\Pi_+(M)$, we only need to know the nonzero entries $(i,j)$ of $M$ that are minimal with respect to outweighing, among all nonzero entries of $M$.
\end{defn}

Outweighing can be described explicitly by cases, depending on the location of the entries $(i,j), (i',j')$ relative to the main diagonal.

\begin{lemma} \label{lem_outweighing}
Let $(i,j), (i',j') \in [N+1]^2.$
\begin{enumerate}
    \item If $i > j$ and $i' > j'$, then $(i, j)$ outweighs $(i', j')$ if and only if $(i, j) \neq (i',j')$ and $i \geq i'$ and $j \leq j'$.
    \item If $i > j$ and $i' \leq j'$, then $(i, j)$ outweighs $(i', j')$.
    \item If $i < j$ and $i' < j'$, then $(i,j)$ outweighs $(i', j')$ if and only if $(i, j) \neq (i', j')$ and $i \geq i'$ and $j \leq j'.$
\end{enumerate}
\end{lemma}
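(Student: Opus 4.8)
The plan is to prove Lemma~\ref{lem_outweighing} by a direct computation using the explicit descriptions of the weights $\barwt(a_{ij})$ provided by Lemma~\ref{lem_root_weights}, together with the characterization of outweighing in terms of the functionals $s_1, \dots, s_N$ given in Definition~\ref{def_outweighing}: namely, $(i,j)$ outweighs $(i',j')$ if and only if $\barwt(a_{ij}) \neq \barwt(a_{i'j'})$ and $s_k(\barwt(a_{ij})) \leq s_k(\barwt(a_{i'j'}))$ for every $k \in [N]$. Since $s_k(\barwt(a_{ij}))$ is just the $k$-th coordinate of $\barwt(a_{ij})$ in the preferred basis, each case reduces to comparing the coordinate vectors described in Lemma~\ref{lem_root_weights} entrywise.

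For case (1), where $i > j$ and $i' > j'$, both weight vectors consist of a block of $-1$'s flanked by $0$'s: for $(i,j)$ the $-1$-block occupies positions $j$ through $i-1$, and for $(i',j')$ it occupies positions $j'$ through $i'-1$. The condition $\barwt(a_{ij}) \le \barwt(a_{i'j'})$ coordinatewise says exactly that wherever $\barwt(a_{i'j'})$ has a $-1$, so does $\barwt(a_{ij})$; that is, the interval $[j', i'-1]$ is contained in the interval $[j, i-1]$, which is equivalent to $j \le j'$ and $i - 1 \ge i' - 1$, i.e.\ $j \le j'$ and $i \ge i'$. Adding the requirement that the vectors differ gives $(i,j) \ne (i',j')$, yielding exactly the stated criterion. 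Case (3), where $i < j$ and $i' < j'$, is entirely parallel: now both vectors are blocks of $+1$'s (positions $i$ through $j-1$, resp.\ $i'$ through $j'-1$), and $\barwt(a_{ij}) \le \barwt(a_{i'j'})$ coordinatewise means the $+1$-block of the \emph{smaller} vector is contained in that of the larger, i.e.\ $[i, j-1] \subseteq [i', j'-1]$, which unwinds to $i \ge i'$ and $j \le j'$; again we append the distinctness condition. I would present cases (1) and (3) together or note the symmetry to avoid repetition.

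For case (2), where $i > j$ (so $\barwt(a_{ij})$ has all coordinates in $\{0, -1\}$, with at least one $-1$ since $i - 1 \ge j$) and $i' \le j'$ (so $\barwt(a_{i'j'})$ has all coordinates in $\{0, +1\}$, by Lemma~\ref{lem_root_weights}(1) if $i' = j'$ and Lemma~\ref{lem_root_weights}(3) if $i' < j'$), every coordinate of $\barwt(a_{ij})$ is $\le$ the corresponding coordinate of $\barwt(a_{i'j'})$, and the two vectors are distinct because $\barwt(a_{ij})$ has a strictly negative coordinate while $\barwt(a_{i'j'})$ does not; hence $(i,j)$ outweighs $(i',j')$ unconditionally.

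There is no serious obstacle here — the lemma is a routine bookkeeping exercise once Lemma~\ref{lem_root_weights} is in hand. The only point requiring a little care is being precise about the endpoints of the $\pm 1$-blocks (the block for $a_{ij}$ with $i > j$ runs over indices $j, j+1, \dots, i-1$, not $j, \dots, i$), and handling the degenerate sub-case $i' = j'$ in part (2), where $\barwt(a_{i'j'}) = 0$ and the claim is immediate. I would also remark that parts (1)--(3) are exhaustive for the purpose of determining minimal nonzero entries: an entry with $i < j$ is always outweighed by any nonzero entry with $i' > j'$ by the ``transpose'' of case (2), and an entry with $i = j$ has weight $0$ and is outweighed by every nonzero entry not on the diagonal, so diagonal entries never contribute vertices unless $M$ is diagonal; but strictly speaking the lemma as stated only asserts the three listed equivalences, so a full case analysis of all nine position-pairs is not needed in the proof itself.
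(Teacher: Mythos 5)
Your proof is correct and follows the same route as the paper, which simply observes that the lemma is immediate from the explicit coordinate description of $\barwt(a_{ij})$ in Lemma~\ref{lem_root_weights}; your entrywise comparison of the $\pm 1$-blocks, including the careful treatment of block endpoints and the degenerate case $i'=j'$, fills in exactly the routine details the paper leaves to the reader.
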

\begin{proof}
 These results follow immediately from Lemma \ref{lem_root_weights}.
\end{proof}

\begin{example}
Suppose that $N = 3$, so $M \in \Mat_{4}$.
\begin{itemize}
    \item The entry $(4,2)$ outweighs every entry on or above the main diagonal, and also outweighs $(4,3)$ and $(3,2)$. 
    \item The entry $(2,4)$ only outweighs $(3,4)$ and $(4,4)$.
    \item The entry $(2,2)$ outweighs every entry above the main diagonal.
\end{itemize}
\end{example}

To keep track of minimal elements (Definition \ref{def_outweighing}), we introduce a combinatorial gadget called the \emph{profile} of a matrix. The matrix profile provides the key link between corner polyhedra and the special Hessenberg functions that appear in Theorem \ref{thm_main}. We show in Corollary \ref{cor_bijection} that the profile of a matrix exactly describes its corner polyhedron.

\begin{defn} \label{def_profile}
A \emph{path} in an $(N+1) \times (N+1)$ matrix is a word of length $2N + 2$ in the letters $\pathd$ and $\pathr$, consisting of $N + 1$ instances of each letter. We identify each path with the corresponding sequence of down-directed and right-directed line segments in the gridlines between entries. A \emph{Dyck path} is a path that does not cross the main diagonal, but may meet it. A \emph{lower Dyck path} is a Dyck path beginning with $\pathd$, and an \emph{upper Dyck path} is a Dyck path beginning with $\pathr$. Note that the first letter determines whether the Dyck path is below or above the main diagonal.

A path \emph{contains} a pair $(i,j) \in [N+1]^2$ if $(i,j)$ is above the path.

The \emph{profile} of a matrix $M$, denoted $\Prof(M)$, is defined as follows.

\begin{enumerate}
    \item Suppose that $M$ is not strictly-upper-triangular. Then $\Prof(M)$ is defined to be the upper Dyck path in the grid lines of $M$ that contains the fewest entries among all upper Dyck paths that contain all nonvanishing entries.
    \item Suppose that $M$ is strictly upper-triangular. Then $\Prof(M)$ is defined as the lower Dyck path that contains the fewest entries among all lower Dyck paths that contain all nonvanishing entries.
\end{enumerate}
We call $\Prof(0)$ the \emph{trivial profile}; we treat it as an exceptional case.
\end{defn}

\begin{remark}
For strictly upper-triangular matrices $M$, the profile $\Prof(M)$ is exactly the data of the Hessenberg function (Definition \ref{defn:flaginkN1}) associated to $M$ with respect to the standard complete flag. For all other $M$, the profile of $M$ is the data of the pointwise maximum of the identity function and that Hessenberg function.
\end{remark}

\begin{defn} \label{def_pivotal_entry}
A \emph{pivotal entry} of a matrix $M$ is a pair $(i,j)$, where $i, j \in [N+1]$, such that the path $\Prof(M)$ follows the closest gridlines to the left of $(i,j)$ and below $(i,j)$. In other words, the pivotal entries correspond to appearances of the subword $\pathd \pathr$ in $\Prof(M)$.
\end{defn}

\begin{lemma} \label{lem_minimal_entries}
Let $M \in \Mat_{N+1}$ be a nonzero matrix. Let $E(M)$ denote the set of nonvanishing entries $(i, j) \in [N+1]^2$ of $M$ that are minimal with respect to outweighing (Definition \ref{def_outweighing}) among all nonvanishing entries of $M$. Then $E(M)$ is a subset of the set of pivotal entries of $M$, as follows:
\begin{enumerate}
    \item If $M$ is upper-triangular but not strictly-upper-triangular, then $E(M)$ is the set of nonvanishing diagonal entries of $M$.
    \item If $M$ is not upper-triangular, then $E(M)$ is the set of pivotal entries $(i,j)$ of $M$ such that $i > j$.
    \item If $M$ is strictly upper-triangular, then $E(M)$ is the set of pivotal entries of $M$.
\end{enumerate}
\end{lemma}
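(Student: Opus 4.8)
The plan is to deduce the whole statement from the explicit description of outweighing in Lemma~\ref{lem_outweighing}, together with the elementary observation that a diagonal coordinate $a_{kk}$ has $\barwt(a_{kk})=0$ (Lemma~\ref{lem_root_weights}(1)), so that $a_{kk}$ is outweighed by exactly the below-diagonal entries and outweighs exactly the above-diagonal entries. \textbf{Coarse reduction.} First I would separate the three regions. If $M$ has a nonvanishing entry strictly below the diagonal, then by Lemma~\ref{lem_outweighing}(2) that entry outweighs every nonvanishing entry lying on or above the diagonal, so $E(M)$ consists only of below-diagonal entries. If $M$ is upper-triangular but not strictly so, there are no below-diagonal entries, hence nothing outweighs a diagonal entry, while every above-diagonal entry is outweighed by any nonvanishing diagonal entry; thus $E(M)$ is exactly the set of nonvanishing diagonal entries, which is claim~(1). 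If $M$ is strictly upper-triangular, $E(M)$ consists only of above-diagonal entries.

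\textbf{Analysis within a region.} By Lemma~\ref{lem_outweighing}(1) and (3), among the below-diagonal entries, and among the above-diagonal entries, outweighing coincides with the coordinatewise order $(i,j)\succ(i',j')\iff i\ge i'\text{ and }j\le j'$. Hence in case~(2) the set $E(M)$ is the set of nonvanishing below-diagonal entries $(i,j)$ that are minimal for this order---equivalently, such that no other nonvanishing below-diagonal entry lies weakly to the lower left (larger row index, smaller column index)---and in case~(3) the analogous statement holds within the above-diagonal entries. In both cases these are the ``outer corners,'' viewed from the lower left, of the staircase region spanned by the nonvanishing entries of the relevant type.

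\textbf{Matching with the profile.} It remains to identify these staircase corners with the pivotal entries of $\Prof(M)$, that is, with the cells at the $\pathd\pathr$ turns of the inclusion-minimal Dyck path (upper if $M$ is not strictly upper-triangular, lower if it is) among those containing every nonvanishing entry. I would argue in two directions. First, because $\Prof(M)$ is inclusion-minimal, it is forced to turn ($\pathd$ then $\pathr$) immediately past each order-minimal nonvanishing entry of the relevant region; this shows every element of $E(M)$ is a pivotal entry, which already yields the asserted inclusion $E(M)\subseteq\{\text{pivotal entries}\}$. Conversely, any $\pathd\pathr$ turn of $\Prof(M)$ lying strictly below the diagonal (in case~(2)), or any turn of the lower Dyck path $\Prof(M)$ in the strictly-upper-triangular case (case~(3)), must be ``pinned'' by a nonvanishing entry---were it not, the path could be pushed past the turn without dropping any contained nonvanishing entry, contradicting minimality---and a short check identifies the pinning entry as order-minimal. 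The remaining pivotal entries in cases~(1) and~(2), which do not lie in $E(M)$, are precisely the turns that $\Prof(M)$ is forced to make by the ``do not cross the main diagonal'' constraint rather than by a nonvanishing entry.

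The step I expect to be the main obstacle is this last matching, and specifically the bookkeeping of how the Dyck constraint interacts with the requirement of containing all nonvanishing entries: this is exactly what distinguishes the three cases and makes $E(M)$ a proper subset of the pivotal set in cases~(1) and~(2) but the whole pivotal set in case~(3). A clean way to carry it out is to describe $\Prof(M)$ explicitly in terms of the column heights $c_j=\max\{i:a_{ij}\ne 0\}$ (with $c_j=0$ on a zero column) and their running maximum, after taking the pointwise maximum with the identity function in the non-strictly-upper-triangular case, and then to read off the $\pathd\pathr$ turns directly; the on/below/above-diagonal trichotomy of $\barwt$ from Lemma~\ref{lem_root_weights} is precisely the trichotomy needed to match these turns against the minimal entries found in the second step.
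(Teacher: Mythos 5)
Your proposal is correct and follows essentially the same route as the paper: reduce to one of the three regions via Lemma~\ref{lem_outweighing} (a below-diagonal nonvanishing entry outweighs everything on or above the diagonal, etc.), then identify the outweighing-minimal nonvanishing entries in that region with the relevant pivotal entries of $\Prof(M)$. The only difference is one of detail rather than method: the paper disposes of the final identification in a single sentence (``$(i',j')$ is outweighed by some entry if and only if it is not pivotal''), whereas you spell out the minimality/pinning argument for the staircase corners; note also that your ``upper/lower'' labels follow the literal wording of Definition~\ref{def_profile}, which is swapped relative to the convention used in the paper's examples and in the proof of Proposition~\ref{prop_vertices}, but this does not affect the substance.
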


\begin{proof}
\par(1)\enspace Since $M$ is upper-triangular but not strictly-upper-triangular, it has a nonvanishing diagonal entry, so by Lemma \ref{lem_outweighing}, no entry $(i, j)$ with $i < j$ is minimal. Since $M$ is upper-triangular, by Lemma \ref{lem_outweighing}, all the diagonal nonvanishing entries are minimal.

\par(2) \enspace Since $M$ is not upper-triangular, it has a nonvanishing entry below the main diagonal. By Lemma \ref{lem_outweighing}, that entry outweighs every entry on or above the main diagonal. So every element of $E(M)$ is below the main diagonal. Consider a nonvanishing entry $(i',j')$ below the main diagonal. The entry $(i', j')$ is outweighed by some entry $(i, j)$ if and only if $(i',j')$ is not pivotal.

\par(3) \enspace Consider an entry $(i',j')$ above the main diagonal. Since $M$ is strictly upper-triangular, by Lemma \ref{lem_outweighing}, the entry $(i',j')$ is outweighed by some entry $(i,j)$ if and only if $(i',j')$ is not pivotal.
\end{proof}

The set of pivotal entries of $M$ has a natural ordering, namely the order in which the pivotal entries appear along the path from upper-left to bottom-right.

Recall that the set $\Ctrl(M)$ of \emph{control points} is the set of vertices of the corner polyhedron $\Pi_+(M)$. The next proposition is the key result of this section. It shows that control points essentially correspond to pivotal entries of $M$.

\begin{prop} \label{prop_vertices}
For any nonzero matrix $M \in \Mat_{N+1}$, there is a canonical ordering on the elements of $\Ctrl(M)$.
\begin{itemize}
    \item If $M$ is upper triangular but not strictly-upper-triangular, then $\Ctrl(M) = \{0\}$, so the ordering is the trivial one.
    \item For any other nonzero $M$, the off-diagonal pivotal entries of $M$ are in bijection with $\Ctrl(M)$ by the map
    $$(i,j) \mapsto \barwt(a_{ij}).$$
    The canonical ordering on $\Ctrl(M)$ can be defined in any of the following equivalent ways:
    \begin{itemize}
        \item Take the ordering inherited from the natural ordering on the pivotal entries;
        \item Take the ordering by the index of the first nonzero coordinate;
        \item Take the ordering by the index of the last nonzero coordinate.
    \end{itemize}
\end{itemize}
\end{prop}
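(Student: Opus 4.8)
The plan is to reduce the computation of the vertices of $\Pi_+(M) = \Corner(\bar\Pi(M))$ to the combinatorial data already assembled in Lemmas \ref{lem_root_weights}, \ref{lem_outweighing}, and \ref{lem_minimal_entries}. The key observation is property (1) of the corner hull: the vertices of $\Corner(\zeta)$ are themselves elements of $\zeta$. Since $\bar\Pi(M) = \Conv\{\barwt(a_{ij}) : a_{ij} \neq 0\}$, we have $\Pi_+(M) = \Corner(\{\barwt(a_{ij}) : a_{ij}\neq 0\})$, and a weight $\barwt(a_{ij})$ is a vertex of this corner polyhedron exactly when it is a minimal element of $\{\barwt(a_{i'j'}) : a_{i'j'} \neq 0\}$ under the partial order ``$w' \in w + \Orth$'' — but that is precisely the outweighing relation of Definition \ref{def_outweighing} (pulled back to entry-indices, noting that distinct entries can share a weight). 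So $\Ctrl(M) = \{\barwt(a_{ij}) : (i,j) \in E(M)\}$ where $E(M)$ is as in Lemma \ref{lem_minimal_entries}.

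First I would dispatch the upper-triangular-but-not-strictly case: by Lemma \ref{lem_minimal_entries}(1), $E(M)$ is the set of nonvanishing diagonal entries, and by Lemma \ref{lem_root_weights}(1) each such entry has $\barwt(a_{ij}) = 0$; hence $\Ctrl(M) = \{0\}$ and the ordering is trivial. Next, for $M$ not upper-triangular, Lemma \ref{lem_minimal_entries}(2) identifies $E(M)$ with the set of off-diagonal (in fact strictly-lower) pivotal entries; for $M$ strictly upper-triangular, Lemma \ref{lem_minimal_entries}(3) identifies $E(M)$ with all pivotal entries, all of which lie strictly above the diagonal. In both remaining cases every element of $E(M)$ is an off-diagonal pivotal entry, and conversely one must check that distinct off-diagonal pivotal entries give distinct weights (so the map $(i,j)\mapsto \barwt(a_{ij})$ is a genuine bijection $E(M) \to \Ctrl(M)$). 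This injectivity follows from Lemma \ref{lem_root_weights}(2),(3): on the strictly-lower side, $\barwt(a_{ij})$ is the indicator vector of the interval $[j, i-1]$ (negated), and for two distinct pivotal entries of a Dyck path the corresponding intervals are distinct because the path's successive $\pathd\pathr$ corners have strictly increasing row- and column-indices; symmetrically on the strictly-upper side. Minimality of these weights (i.e.\ that no pivotal-entry weight is outweighed) is exactly the content of the ``only if'' directions in Lemma \ref{lem_outweighing}, already invoked in the proof of Lemma \ref{lem_minimal_entries}.

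It remains to verify that the three proposed orderings on $\Ctrl(M)$ coincide. Using the explicit form from Lemma \ref{lem_root_weights}: in the strictly-lower case the pivotal entries $(i_1,j_1), (i_2,j_2), \ldots$ listed along the path from top-left to bottom-right satisfy $i_1 > i_2 > \cdots$ wait — rather, along a lower Dyck path read from the start, the $\pathd\pathr$ corners occur at strictly increasing positions, so both the row index $i$ and the column index $j$ of successive pivotal entries are strictly increasing, hence the supports $[j_k, i_k - 1]$ have strictly increasing left endpoints $j_k$ and strictly increasing right endpoints $i_k - 1$. Therefore ordering by first nonzero coordinate (which is $j_k$), ordering by last nonzero coordinate (which is $i_k - 1$), and the path order all agree; the strictly-upper case is identical with the support $[i_k, j_k - 1]$. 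This is the one spot requiring a small but genuine combinatorial argument about Dyck paths, so I expect the ``three orderings agree'' verification — really, the monotonicity of $\pathd\pathr$-corner coordinates along a Dyck path — to be the main (though modest) obstacle; everything else is assembly of prior lemmas.
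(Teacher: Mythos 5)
Your reduction has a genuine gap at its central step. You assert that a weight $\barwt(a_{ij})$ is a vertex of $\Pi_+(M)=\Corner(\{\barwt(a_{i'j'}):a_{i'j'}\neq 0\})$ \emph{exactly when} it is minimal under the partial order $w'\in w+\Orth$, i.e.\ under outweighing. Only one direction of this is true: if a weight is outweighed it is not a vertex, which gives the easy containment $\Ctrl(M)\subseteq\barwt(E(M))$. The converse — minimal implies vertex — is false for general finite sets: take $\zeta=\{(0,2),(1,1),(2,0)\}\subset\R^2$. No point of $\zeta$ dominates another coordinatewise, so all three are minimal, yet $(1,1)=\tfrac12(0,2)+\tfrac12(2,0)$ lies in $\Conv(\zeta)$ and is not a vertex of $\Corner(\zeta)$. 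Minimality only rules out domination by a \emph{single} other weight (which is all Lemma \ref{lem_outweighing} controls), not membership in the convex hull of several other weights plus orthant directions. So your appeal to the ``only if'' directions of Lemma \ref{lem_outweighing} cannot deliver the inclusion $\Ctrl(M)\supseteq\barwt(E(M))$, which is precisely the nontrivial half of the proposition.

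For the particular weight configurations arising here the inclusion does hold, but it must be proved, and this is where the paper spends most of its effort: for each candidate $w_c\in\barwt(E(M))$ it exhibits $N$ independent functionals in $\bar{R}^+$ (combinations such as $s_i+s_{u_c}$ in the non-upper-triangular case, and $s_i$ together with $s_1+\hdots+s_{t_c-1}+s_i+s_{u_c+1}+\hdots$ in the strictly upper-triangular case) each of which attains its minimum over $\barwt(E(M))$ at $w_c$, using the staircase structure $t_{c'}<t_{c''}$, $u_{c'}<u_{c''}$ of the pivotal intervals. That supporting-functional construction, done separately in the two cases, is entirely absent from your proposal and is the step you would need to supply. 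The remainder of your argument — the upper-triangular-but-not-strict case, the injectivity of $(i,j)\mapsto\barwt(a_{ij})$ on off-diagonal pivotal entries, and the agreement of the three orderings via the monotonicity of the $\pathd\pathr$ corners along the Dyck path — is correct and matches the paper.
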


\begin{proof}
Let $E(M)$ be as defined in Lemma \ref{lem_minimal_entries}. Let
$$\barwt(E(M)) = \{ \barwt(a_{ij})  \colon (i,j) \in E(M) \}.$$

We consider the same three cases. 

\par(1)\enspace Suppose $M$ is upper triangular but not strictly-upper-triangular. Then it follows immediately from Lemma \ref{lem_root_weights} that $\Ctrl(M) = \{0\}$.

\par(2)\enspace Suppose $M$ is not of the form of case (1). We claim that $\Ctrl(M) = \barwt(E(M))$. The containment $\Ctrl(M) \subseteq \barwt(E(M))$ follows from the definition of $E(M)$, since if some entry $(i,j)$ is outweighed by a nonvanishing entry $(i', j')$, then $\barwt(a_{ij}) \in \barwt(a_{i' j'}) + \Orth$ and $\barwt(a_{ij}) \neq \barwt(a_{i'j'})$, so $\barwt(a_{ij})$ is not a vertex.

By Lemma \ref{lem_minimal_entries}, there is an induced ordering on $E(M)$ from the canonical ordering on the pivotal entries. Since $\barwt$ is injective on $E(M)$, we also obtain an ordering on $\barwt(E(M))$, and by restriction, on $\Ctrl(M)$. This is the canonical ordering in the theorem statement. The canonical ordering on the set of pivotal entries can equivalently be described as ordering by increasing row or increasing column in $M$. Combining this observation with the explicit description of the weights in Lemma \ref{lem_root_weights} gives us the three equivalent definitions of the ordering on $\Ctrl(M)$.

We claim $\Ctrl(M) \supseteq \barwt(E(M))$. To prove this, we subdivide into two cases.
\par(2a)\enspace Assume $M$ is non-upper-triangular. Then $\Prof(M)$ is a lower Dyck path.

Given $w \in \barwt(E(M))$, we must show that $w$ is a vertex of $\Pi_+(M)$. This is true if there are $N$ independent linear functionals $r \in \bar{R}^+$ which achieve their minimum on $\Pi_+(M)$ at $w$. Say $\kappa = \# \barwt(E(M))$. Using the canonical ordering, let $w_c$ be the $c$-th element of $\barwt(E(M))$. It is of the form
$$(0,\hdots,0,-1,\hdots,-1,0,\hdots,0),$$
where, for some $t_c, u_c \in [N]$ with $t_c \leq u_c$, the $-1$'s run from index $t_c$ to $u_c$ inclusive. It follows from the canonical ordering that, for any $c', c'' \in [\kappa]$ with $c' < c''$, we have $t_{c'} < t_{c''}$ and $u_{c'} < u_{c''}$. Therefore, the following $N$ functionals on $\barwt(E(M))$ are minimized at $w_c$.
\begin{itemize}
    \item For all $i$ in the range $1 \leq i < t_c$, the functional
    $s_i + s_{u_{c}}$ on $\barwt(E(M))$ achieves its minimum value $-1$ at $w_c$.
    \item For all $i$ in the range $t_c \leq i \leq u_c$, the functional
    $s_i + s_{u_c}$
    on $\barwt(E(M))$ achieves its minimum value $-2$ at $w_c$.
    \item For all $i$ in the range $u_c < i \leq N$, the functional
    $s_{t_c} + s_i$
    on $\barwt(E(M))$ achieves its minimum value $-1$ at $w_c$.
\end{itemize}
The $N$ functionals named above are easily seen to be independent, so $w_c$ is a vertex of the corner hull $\Pi_+(M)$ of $\barwt(E(M))$.

\par(2b)\enspace Assume $M$ is strictly upper-triangular. Then $\Prof(M)$ is an upper Dyck path. As in (2a), we claim $\Ctrl(M) \supseteq \barwt(E(M))$. Given $w \in \barwt(E(M))$, we seek $N$ independent linear functionals $r \in \bar{R}^+$ which achieve their minimum on $\Pi_+(M)$ at $w$. Say $\kappa = \# \barwt(E(M))$. Using the canonical ordering, let $w_c$ be the $c$-th element of $\barwt(E(M))$. It is of the form
$$(0,\hdots,0,1,\hdots,1,0,\hdots,0),$$
where for some $t_c, u_c \in [N]$ with $t_c \leq u_c$, the $1$'s run from index $t_c$ to $u_c$ inclusive. It follows from the canonical ordering that, for any $c', c'' \in [\kappa]$ with $c' < c''$, we have $t_{c'} < t_{c''}$ and $u_{c'} < u_{c''}$. Then the following $N$ functionals are independent and simultaneously minimized at $w_c$.
\begin{itemize}
    \item For all $1 \leq i < t_c$ and $i > u_c$, the functional $s_i$ on $\barwt(E(M))$ achieves its minimum value $0$ at $w_c$.
    \item For all $i$ in the range $t_c \leq u_c$, the functional
    $$s_1 + \hdots + s_{t_c - 1} + s_i + s_{u_c + 1} + \hdots + s_\kappa$$
    achieves its minimum value 1 at $w_c$.
\end{itemize}
\end{proof}

Finally, we classify corner polyhedra of type $A_N$.
\begin{cor} \label{cor_bijection}
    Two nonzero matrices in $\Mat_{N+1}$ have the same corner polyhedron if and only if they have the same profile.
\end{cor}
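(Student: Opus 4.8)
The corollary is a repackaging of Proposition~\ref{prop_vertices}, and the plan has three steps: (i) observe that a corner polyhedron of type $A_N$ is recovered from its vertex set, so that ``$\Pi_+(M)=\Pi_+(M')$'' is the same statement as ``$\Ctrl(M)=\Ctrl(M')$''; (ii) use Proposition~\ref{prop_vertices} and Lemma~\ref{lem_root_weights} to translate $\Ctrl(M)$ into the purely combinatorial datum $E(M)$ of Lemma~\ref{lem_minimal_entries}, together with the knowledge of which of the three cases of Lemma~\ref{lem_minimal_entries} holds; and (iii) use Definitions~\ref{def_profile} and~\ref{def_pivotal_entry} together with Lemma~\ref{lem_minimal_entries} to see that this datum and $\Prof(M)$ determine each other. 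For step (i): $\Pi_+(M)=\Corner(\bar\Pi(M))$ is the Minkowski sum of a polytope and $\Orth$, hence a pointed polyhedron whose recession cone is $\Orth$; therefore $\Pi_+(M)=\Conv(\Ctrl(M))+\Orth$, so $\Pi_+(M)$ determines $\Ctrl(M)$ (vertices of a polyhedron are intrinsic) and conversely $\Ctrl(M)$ determines $\Pi_+(M)$. Thus it suffices to prove $\Ctrl(M)=\Ctrl(M')$ if and only if $\Prof(M)=\Prof(M')$.

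For steps (ii)--(iii): by the proof of Proposition~\ref{prop_vertices} we have $\Ctrl(M)=\barwt(E(M))$, with $E(M)$ as in Lemma~\ref{lem_minimal_entries}. By Lemma~\ref{lem_root_weights}, the assignment $(i,j)\mapsto\barwt(a_{ij})$ is injective away from the main diagonal, sends sub-diagonal entries to vectors with a negative and no positive coordinate, sends super-diagonal entries to vectors with a positive and no negative coordinate, and sends diagonal entries to $0$. Hence the following is read off directly from $\Ctrl(M)$: $M$ is upper-triangular-but-not-strictly iff $\Ctrl(M)=\{0\}$; $M$ is not upper-triangular iff $\Ctrl(M)$ has a vertex with a negative coordinate; $M$ is strictly upper-triangular iff $\Ctrl(M)$ is nonzero with all vertices coordinatewise nonnegative. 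In the first case $E(M)$ carries no information, but also $\Prof(M)$ is the same for all such $M$ (by the remark after Definition~\ref{def_profile}: the profile is the pointwise maximum of the identity and the Hessenberg function, and for upper-triangular $T$ one has $\Hess_{T,\mc H}(i)\le i$ for the standard flag, so this maximum is the identity function), so there is nothing to prove there. In the other two cases $\barwt$ is injective on $E(M)$, so $E(M)$ is recovered from $\Ctrl(M)$ by applying $\barwt^{-1}$ to the vertices; and conversely $\barwt(E(M))=\Ctrl(M)$. Therefore $\Ctrl(M)=\Ctrl(M')$ if and only if $M,M'$ lie in the same one of the three cases and have the same set $E(M)$.

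It remains to see that, within each case, $E(M)$ and $\Prof(M)$ determine each other. By Lemma~\ref{lem_minimal_entries}, $E(M)$ equals the set of off-diagonal pivotal entries of $M$ when $M$ is not upper-triangular, and the set of all pivotal entries when $M$ is strictly upper-triangular; either way, by Definition~\ref{def_pivotal_entry}, $\Prof(M)$ (which is a lower Dyck path in the former case and an upper Dyck path in the latter, by Definition~\ref{def_profile}) determines its $\pathd\pathr$-corners, hence $E(M)$. For the reverse direction one uses the elementary fact that a lattice path from the top-left to the bottom-right corner of the $(N+1)\times(N+1)$ grid is recovered uniquely from its set of $\pathd\pathr$-corners, by reading off the lengths of its successive maximal runs of $\pathr$'s and $\pathd$'s from the coordinates of those corners; since the side of the Dyck path is fixed by the case, and since minimality of the profile forces its diagonal $\pathd\pathr$-corners to be the ones closest to the main diagonal compatible with the given off-diagonal corners, $\Prof(M)$ is determined by $E(M)$ and the case. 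Combining with the previous paragraph gives both directions. \textbf{The main obstacle} is precisely this last point --- that the profile is reconstructed from its off-diagonal pivotal entries alone, since $E(M)$ records only those --- and it is handled by the explicit path bookkeeping just described, with care taken about the two orientations of Dyck path and the forcing of the diagonal corners by minimality; everything else is formal manipulation on top of Proposition~\ref{prop_vertices}, Lemma~\ref{lem_root_weights}, and Lemma~\ref{lem_minimal_entries}.
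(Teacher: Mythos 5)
Your proof is correct and takes essentially the same route as the paper's: same corner polyhedron iff same vertex set $\Ctrl(M)$, iff same $\barwt(E(M))$ (Proposition~\ref{prop_vertices}), iff same set of off-diagonal pivotal entries, iff same profile. The only difference is that you make explicit the converse reconstruction step --- reading the case off the signs of the control points and recovering $\Prof(M)$ from its off-diagonal $\pathd\pathr$-corners via minimality --- which the paper's brief proof leaves implicit; your bookkeeping there is sound.
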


\begin{proof}
 Let $M_1, M_2 \in \Mat_{N+1}$. By the basic properties of the corner hull operation, we have $\Pi_+(M_1) = \Pi_+(M_2)$ if and only if $\Ctrl(M_1) = \Ctrl(M_2)$. By Lemma \ref{prop_vertices}, two matrices $M_1$ and $M_2$ satisfy $\Ctrl(M_1) = \Ctrl(M_2)$ if and only if $\barwt(E(M_1)) = \barwt(E(M_2))$. The set $\barwt(E(M))$ is determined by the off-diagonal pivotal entries of $M$, which is determined by $\Prof(M)$.
\end{proof}

\begin{cor} \label{cor_catalan}
    Let $C_N$ be the $N$-th Catalan number. The number of corner polyhedra of type $A_N$ is $2 C_{N+1} - 1$. 
\end{cor}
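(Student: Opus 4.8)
The plan is to reduce the count to an enumeration of matrix profiles and then to two Catalan enumerations. By Corollary~\ref{cor_bijection}, the number of corner polyhedra of type $A_N$ equals the number of distinct profiles $\Prof(M)$ as $M$ ranges over the nonzero matrices in $\Mat_{N+1}$. Following Definition~\ref{def_profile}, I would split these according to whether $M$ is strictly upper triangular: if $M$ is not strictly upper triangular (which already forces $M\neq 0$) then $\Prof(M)$ is an upper Dyck path of semilength $N+1$, while if $M$ is strictly upper triangular and nonzero then $\Prof(M)$ is a lower Dyck path of semilength $N+1$. Since a Dyck path begins with $\pathr$ or with $\pathd$ but not both, these two classes of paths are disjoint, so it suffices to count the profiles realized in each class and add.

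To identify the realized profiles I would pass from paths to functions via the remark following Definition~\ref{def_profile}. Write $h_M$ for the Hessenberg function (Definition~\ref{defn:flaginkN1}) of the standard complete flag $H_i=\Span(e_1,\dots,e_i)$ relative to $M$; then $\Prof(M)$ encodes $h_M$ when $M$ is strictly upper triangular, and encodes the function $i\mapsto\max(i,h_M(i))$ otherwise. I would then record the elementary constraints: every Hessenberg function is nondecreasing and sends $0$ to $0$; if $M$ is strictly upper triangular then $M H_i\subseteq H_{i-1}$, so $h_M(i)\le i-1$ for $1\le i\le N+1$, and moreover $h_M\equiv 0$ exactly when $M=0$; and for any $M$ the function $i\mapsto\max(i,h_M(i))$ is nondecreasing, fixes $0$, and dominates the identity.

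Next I would verify that both realizations are surjective onto the evident sets of functions. Given a nondecreasing $f\colon\{0,\dots,N+1\}\to\{0,\dots,N+1\}$ with $f(0)=0$ and $f(i)\ge i$ for all $i$, set $M_f e_i=e_{f(i)}$; one checks $h_{M_f}=f$, and since $f(N+1)=N+1$ the matrix $M_f$ has a nonzero diagonal entry, hence is not strictly upper triangular, so $\Prof(M_f)$ encodes $\max(\mathrm{id},f)=f$. Thus the profiles in the first class are in bijection with the nondecreasing functions $f$ above; after forgetting the forced value $f(0)=0$ these are precisely the Hessenberg functions for $\GL_{N+1}$, so there are $C_{N+1}$ of them. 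Symmetrically, given a nondecreasing $h$ with $h(0)=0$, $h(i)\le i-1$ and $h\not\equiv 0$, the matrix whose $i$-th column is $e_{h(i)}$ (or $0$ when $h(i)=0$) is strictly upper triangular, nonzero, and has Hessenberg function $h$; so the profiles in the second class are in bijection with these $h$. Under the shift $g(k)=h(k+1)$, the functions $h$ with $h(0)=0$ and $h(i)\le i-1$ correspond to nondecreasing $g\colon\{0,\dots,N\}\to\{0,\dots,N\}$ with $g(k)\le k$, a standard Catalan family of size $C_{N+1}$; discarding $g\equiv 0$ (i.e.\ $M=0$) leaves $C_{N+1}-1$.

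Adding the two classes then gives $C_{N+1}+(C_{N+1}-1)=2C_{N+1}-1$. The step that I expect to need the most care is the bookkeeping around the exceptional profile $\Prof(0)$: it is the single lower Dyck path not realized by any nonzero strictly-upper-triangular matrix, and it is exactly the source of the ``$-1$''. One must confirm that it is not realized in some other way --- immediate, since every profile coming from a non-strictly-upper-triangular matrix is an upper, not a lower, Dyck path --- and that no profile is shared by the two classes, which follows from the comparisons with the identity function recorded above (and is vacuous when $N=0$). The two Catalan counts themselves are standard, and I would cite rather than reprove them.
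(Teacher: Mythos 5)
Your argument is correct and takes essentially the same route as the paper: reduce via Corollary~\ref{cor_bijection} to counting profiles of nonzero matrices, split according to strict upper-triangularity, check that every Dyck path in each class except the trivial profile of the zero matrix is realized, and add the two Catalan counts. The paper simply asserts the realizability step, which you verify explicitly through Hessenberg functions and realizing matrices; your upper/lower labels follow the literal wording of Definition~\ref{def_profile} (which is swapped relative to the paper's examples and the proof of Proposition~\ref{prop_vertices}), but by symmetry this has no effect on the count.
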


\begin{proof}
 The corner polyhedra of type $A_N$ all arise as corner polyhedra associated to matrices by Corollary \ref{cor_bijection}, and so it suffices to count the set 
 $$\{ \Prof(M)  \colon 0 \neq M \in \Mat_{N+1}\}$$
 instead. Any lower Dyck path and nontrivial upper Dyck path may appear as a matrix profile. By definition, the Catalan numbers count Dyck paths, so there are $C_{N+1}$ lower Dyck paths and $C_{N+1} - 1$ nontrivial upper Dyck paths.
\end{proof}

For another connection between Catalan numbers and $A_N$ root polytopes, see \cite[Theorem 8]{MR1697418}.

\subsection{Facets of corner polyhedra of type \texorpdfstring{$A_N$}{A N}} \label{sec_facets}
We have calculated the vertices of $\Pi_+(M)$ for any nonzero matrix $M \in \Mat_{N+1}$ (Proposition \ref{prop_vertices}). In this section, we calculate the facets of $\Pi_+(M)$ (Proposition \ref{prop_facets}). These results are the key input for proving Theorem \ref{thm_main}; the facets of $\Pi_+(M)$ roughly correspond to Type I, II, and III flags relative to the linear map $M$.

We work with halfspaces in $\bar{W}$, which are of the form $\{ w \in \bar{W}  \colon s(w) \geq c \}$ for some $s \in \bar{R}^+, c \in \R$. For convenience, we write halfspaces as $\{ s \geq c \}$. 

For any nonempty $I \subseteq [N]$, we define a functional
$$s_I := \sum_{i \in I} s_i.$$

\begin{defn}
The \emph{standard flag in $k^{N+1}$} is the sequence 
$$0 = S_0 \subsetneq S_1 \subsetneq \hdots \subsetneq S_{N+1} = k^{N+1}$$ of subspaces of $k^{N+1}$, where for each $\rho \in [N]$, the subspace $S_\rho$ is the span of the first $\rho$ standard basis vectors. 

The \emph{Hessenberg function} of $M$, denoted $\Hess_M$, is the Hessenberg function of the linear map defined by $M$ on $k^{N+1}$ relative to the standard flag:
\begin{align*}
    \Hess_{M}  : \; & \{0, \hdots, N + 1\} \to \{0, \hdots N + 1\}, \\
    & i \mapsto \min \{j : MS_i \subseteq S_j\}.
\end{align*}
Each nonempty subset 
$$I = \{i_1, \hdots, i_\gamma\} \subseteq [N]$$
determines a subflag 
$$0 = S_{i_0} \subsetneq S_{i_1} \subsetneq \hdots \subsetneq S_{i_\gamma} \subsetneq S_{i_{\gamma + 1}} = k^{N+1}$$
of the standard flag. Let $h_{M, I}$ denote the Hessenberg function of $M$ with respect to that subflag. Explicitly,
\begin{align*}
    \Hess_{M, I}  : \; & \{0, \hdots, \gamma + 1 \} \to \{ 0, \hdots, \gamma + 1 \}, \\
    & t \mapsto \min \{t' : MS_{i_t} \subseteq S_{i_{t'}}\}.
\end{align*}
\end{defn}

The following proposition is the main result of this section.

\begin{prop} \label{prop_facets}
Let $M \in \Mat_{N + 1}$ be a nonzero matrix. Let $0 = S_0 \subset S_1 \subset \hdots \subset S_{N+1}$ be the standard flag. The facets of the corner polyhedron $\Pi_+(M)$ are of four kinds, called (F-1A), (F-1B), (F-2A), and (F-2B), as follows. For each nonempty subset $$I = \{i_1, \hdots, i_\gamma\} \subseteq [N],$$
where by convention we let $\gamma = \#I$ and
$$0 = i_0 < i_1 < \hdots < i_\gamma < i_{\gamma + 1} = N + 1,$$
there is at most one facet of $\Pi_+(M)$.

If $M$ is not strictly-upper-triangular:
        \begin{itemize}
        \item (F-1A): If 
            \begin{equation*}
            \label{eq_1A}
                \gamma = 1 \quad \text{and} \quad M S_{i_1} \subseteq S_{i_1},
            \tag{\ref{prop_facets}.1A}
            \end{equation*}
        there is a facet 
            $$\{ s_I \geq 0 \}.$$
        \item (F-1B): If
            \begin{equation*} \label{eq_1B}
            h_{M, I}(t) = t + 1 \quad (1 \leq t \leq \gamma),
            \tag{\ref{prop_facets}.1B}
            \end{equation*}
        there is a facet
            $$ \left\{ s_I \geq -1 \right\}. $$
        \end{itemize}
If $M$ is strictly upper-triangular:
    \begin{itemize}
        \item (F-2A): If
            \begin{equation*} \label{eq_2A}
            \gamma = 1 \quad \text{ and } \quad (MS_{N+1} \not\subseteq S_{i_1} \quad \text{or} \quad MS_{i_1} \neq 0),
            \tag{\ref{prop_facets}.2A}
            \end{equation*}
        there is a facet
        $$\{ s_I \geq 0 \}.$$

        \item (F-2B): If
            \begin{equation*} \label{eq_2B}
            h_{M, I}(t) = t - 1 \quad (1 \leq t \leq \gamma + 1),
            \tag{\ref{prop_facets}.2B}
            \end{equation*}
        there is a facet
        $$\{ s_I \geq 1\}.$$
    \end{itemize}
\end{prop}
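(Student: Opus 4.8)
The idea is to read the facets of $\Pi_+(M)$ straight off the vertex description in Proposition~\ref{prop_vertices}. Write $\Pi_+(M)=\Conv(\Ctrl(M))+\Orth$. Since $\Conv(\Ctrl(M))$ is bounded, the recession cone of $\Pi_+(M)$ is exactly $\Orth$, so the inner normal of every facet lies in the dual cone $\bar{R}^+$, the nonnegative span of $s_1,\hdots,s_N$. For such a normal $s\in\bar{R}^+\smallsetminus\{0\}$ one has $s\ge 0$ on $\Orth$, hence $\min_{\Pi_+(M)}s$ is attained on $\Conv(\Ctrl(M))$ and equals $c_s:=\min_{w\in\Ctrl(M)}s(w)$; the only candidate facet with normal $s$ is $\{s\ge c_s\}$, and the face it cuts out is $\{\,w\in\Conv(\Ctrl(M)):s(w)=c_s\,\}+\{\,o\in\Orth:s(o)=0\,\}$. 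So the task is to decide which $s\in\bar{R}^+$ make this face $(N-1)$-dimensional, to compute $c_s$, and to rewrite the condition in terms of $M$ and the standard flag.

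First I would dispose of the case where $M$ is upper- but not strictly-upper-triangular: there $\Ctrl(M)=\{0\}$ by Proposition~\ref{prop_vertices}, so $\Pi_+(M)=\Orth$, whose facets are exactly $\{s_i\ge 0\}$, $i\in[N]$. These match the facets (F-1A), since a singleton $I=\{i\}$ automatically satisfies $\gamma=1$ and $MS_i\subseteq S_i$, and there are no (F-1B) facets because $MS_i\subseteq S_i$ forces $h_{M,I}(1)=1\ne 2$. For the two remaining regimes, Lemma~\ref{lem_root_weights} and Proposition~\ref{prop_vertices} identify the control points as signed interval-indicator vectors with a ``staircase'' property: for $M$ not upper triangular, $\Ctrl(M)$ consists of the vectors with $-1$ exactly in coordinates $j_c,\hdots,i_c-1$; for $M$ strictly upper triangular, the vectors with $+1$ exactly in coordinates $i_c,\hdots,j_c-1$; in both cases $(i_c,j_c)$ runs over the off-diagonal pivotal entries of $M$ in their canonical order and, crucially, both interval endpoints are strictly increasing in $c$. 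In $s$-coordinates, $s_I(w_c)=-\#(I\cap[j_c,i_c-1])$ (resp. $+\#(I\cap[i_c,j_c-1])$), so $c_{s_I}$ and the face structure become combinatorial statements about how $I=\{i_1<\hdots<i_\gamma\}$ meets this chain of intervals. I would carry out the non-upper-triangular case in detail and then run the mirror-image argument for the strictly-upper-triangular case, with ``pushing the flag up'' replaced by ``pushing the flag down''.

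The substance, for $M$ not upper triangular, is three steps. (i) \emph{Only $0/1$ normals with small constant are facet normals.} If $s=\sum\lambda_i s_i$ has two distinct positive coefficients, or if some $s_I$ has $c_{s_I}\le-2$, I would use the staircase property to split $\{s\ge c_s\}$ as a positive combination of two valid inequalities that strictly dominate it, so its face is not a facet; this leaves the $s_I$ with $\max_c\#(I\cap[j_c,i_c-1])\le 1$. (ii) \emph{Constant $0$.} Here $c_{s_I}=0$ forces $I=\{i\}$ to be a singleton disjoint from every interval $[j_c,i_c-1]$; since $\Prof(M)$ is the minimal Dyck path above the nonzero entries of $M$, this is equivalent to $MS_i\subseteq S_i$, and $\{s_i=0\}$ is then automatically a facet (it already contains $N-1$ extreme rays of $\Orth$), giving (F-1A). (iii) \emph{Constant $-1$.} Otherwise $c_{s_I}=-1$, and I would show $\{s_I\ge-1\}$ is a facet exactly when each $i_t$ ($1\le t\le\gamma$) lies in some interval $[j_c,i_c-1]$ while no single interval contains two consecutive $i_t,i_{t+1}$ --- and then identify this with $h_{M,I}(t)=t+1$ for all $1\le t\le\gamma$, the first clause saying $MS_{i_t}\not\subseteq S_{i_t}$ and the second $MS_{i_t}\subseteq S_{i_{t+1}}$. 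In (ii) and (iii) the required $N$ affinely independent points on the candidate facet are produced explicitly, reusing the minimizing functionals built in the proof of Proposition~\ref{prop_vertices} together with the extreme rays of $\Orth$ lying on the supporting hyperplane. The mirror argument for strictly-upper-triangular $M$ then yields (F-2A) --- where ``$MS_{N+1}\not\subseteq S_{i_1}$ or $MS_{i_1}\ne 0$'' is precisely ``some interval $[i_c,j_c-1]$ misses $i_1$'' --- and (F-2B), indexed by the $I$ for which the subflag is totally furled, i.e. $h_{M,I}(t)=t-1$ for all $1\le t\le\gamma+1$.

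I expect the main obstacle to be step (i): proving that the $0/1$-functionals $s_I$ with constant in $\{0,-1\}$ (resp. $\{0,1\}$) already cut out $\Pi_+(M)$, so that nothing else is a facet --- this is where the staircase structure of the pivotal entries is genuinely used, and it is what makes the facet list finite and clean. The companion bookkeeping in step (iii), translating the ``interval-hitting'' pattern into the Hessenberg equalities $h_{M,I}(t)=t\pm1$, is the other delicate point, but it amounts to a dictionary between $\Prof(M)$ and the Hessenberg functions of $M$, in the spirit of the Remark following Definition~\ref{def_profile}. Validity of the four listed inequalities, and the upper-triangular special case, are routine consequences of Lemma~\ref{lem_root_weights}.
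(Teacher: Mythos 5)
Your overall strategy is sound and lands on the correct characterizations: the reduction to inner normals in $\bar R^+$ via the recession cone, the facet-verification by the Minkowski decomposition (face of $\Conv(\Ctrl(M))$ plus the face of $\Orth$ where the normal vanishes), the interval/staircase reading of $\Ctrl(M)$ from Proposition~\ref{prop_vertices}, and the dictionaries (``$i_t$ covered by some interval'' $\Leftrightarrow MS_{i_t}\not\subseteq S_{i_t}$, ``no interval contains $i_t,i_{t+1}$'' $\Leftrightarrow MS_{i_t}\subseteq S_{i_{t+1}}$, and your reading of \eqref{eq_2A}) all agree with the paper. Where you genuinely diverge is the completeness direction. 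The paper does not argue by redundancy: it projects a putative facet normal with support $I$ onto the coordinate subspace $\bar W_I$, observes that the projected corner polyhedron $\Pi_+^I(M)$ has at most $\#I$ vertices by the canonical ordering while the projected facet must contain $\#I$ of them, and then classifies the resulting square control submatrices $\VertPlusI(M)$ (Lemmas~\ref{lem_facets_lower_ctrl} and \ref{lem_facets_upper_ctrl}), with the flag translation done separately in the proof of the proposition. Your route stays in the ambient space and tries to exhibit every non-listed valid inequality as a positive combination of listed ones. That is viable, but be aware of two points. First, ``strictly dominate'' is not the right criterion: to rule out a facet you must write $\{s\ge c_s\}$ as a nonnegative combination of valid inequalities with non-parallel normals whose constants sum \emph{exactly} to $c_s$, i.e.\ the component inequalities must be tight at a common point of $\Pi_+(M)$; arranging this when $c_{s_I}\le-2$ (resp.\ $\ge 2$) is precisely where the staircase property has to be deployed carefully (splitting off an arbitrary element of $I$ can drop the constant by too much when another column also realizes the extreme intersection), and this is the analogue of the paper's terse ``square implies $0$ or $\pm1$'' step. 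Second, the strictly-upper-triangular case is not a literal mirror: the resulting facet patterns are genuinely asymmetric (compare Examples~\ref{ex_lower_stair} and \ref{ex_upper_stair} --- all $2^N-1$ sets $I$ give (F-1B) facets in one, but only $I=[N]$ gives an (F-2B) facet in the other), because there the facet condition is that every column interval meets $I$ and each $i_t$ occurs as the \emph{unique} intersection element of some column, not merely that each $i_t$ is covered; so that bookkeeping must be done on its own, as the paper does in Lemma~\ref{lem_facets_upper_ctrl}. With those two points supplied, your plan gives a correct proof, arguably more self-contained than the paper's since it avoids the auxiliary matrices $\VertPlusI(M)$, at the cost of explicit splitting constructions.
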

The proof of Proposition \ref{prop_facets} is involved. We first prove intermediate results, Lemmas \ref{lem_facets_lower_ctrl} and \ref{lem_facets_upper_ctrl}, that compute facets of $\Pi_+(M)$ via the following combinatorial gadget.

\begin{defn}
For any nonzero matrix $M \in \Mat_{N+1}$, we view $\Ctrl(M)$ as a matrix, the \emph{control matrix} of $M$, by ordering its elements according to the canonical ordering of Proposition \ref{prop_vertices}. Let $\kappa = \# \Ctrl(M)$. Then $\Ctrl(M)$ is a $N \times \kappa$ matrix. 
For examples, see Section \ref{sect_examples}.
\end{defn}

The form of $\Ctrl(M)$ is one of three kinds.
\begin{enumerate}
    \item If $M$ is upper-triangular, but not strictly so, then $\Ctrl(M) = 0.$
    \item If $M$ is not upper-triangular, then the entries of $\Ctrl(M)$ are each 0 or $-1$. For each $c \in [\kappa]$, say the $c$-th off-diagonal pivotal entry of $M$ is $(t_c, u_c - 1)$. Then column $c$ of $\Ctrl(M)$ consists of a single block of $-1$'s running from $t_c$ to $u_c$ inclusive, and has all other entries 0.  By Proposition \ref{prop_vertices}, if $c', c'' \in [\kappa]$ satisfy $c' < c''$, then $t_{c'} < t_{c''}$ and $u_{c'} < u_{c''}$.
    \item If $M$ is strictly upper-triangular, then the entries of $\Ctrl(M)$ are each $0$ or $1$. For each $c \in [\kappa]$, say the $c$-th off-diagonal pivotal entry of $M$ is $(u_c, t_c - 1)$; note the difference from the previous case. Then column $c$ of $\Ctrl(M)$ consists of a single block of $1$'s running from $t_c$ to $u_c$ inclusive, and has all other entries 0. By Proposition \ref{prop_vertices}, if $c', c'' \in [\kappa]$ satisfy $c' < c''$, then $t_{c'} < t_{c''}$ and $u_{c'} < u_{c''}$.
\end{enumerate}

In order to understand facets from the control matrix, we look at certain submatrices, which themselves correspond to lower-dimensional corner polyhedra. We set this argument up as follows.

Let $y_1, \hdots, y_N \in \bar{W}$ be the dual vectors to $s_1, \hdots, s_N$. Given any subset $I \subseteq [N]$, let $\bar{W}_I$ be the subspace of $\bar{W}$ generated by the $y_i, i \in I$. If $i' \not\in I$, then $s_{i'}$ is identically 0 on $\bar{W}_I$. Let $\gamma = \#I$. Taking $s_i$ for each $i \in I$ as coordinates identifies $\bar{W}_I$ with $\R^{\gamma}$. Define $\Pi_+^I(M)$ to be the projection of $\Pi_+(M)$ onto $\bar{W}_I$.

Observe that $\Pi_+^I(M)$ is itself a corner polyhedron in $\bar{W}_I$ with this choice of basis. We denote the set of control points of $\Pi_+^I(M)$ in $\bar{W}_I$ by $\VertPlusI (M)$. It follows that there is a canonical ordering on $\VertPlusI (M)$, so writing $\kappa' = \# \VertPlusI(M)$, the set $\VertPlusI(M)$ can be identified with a $\gamma \times \kappa'$ matrix.  

The matrix $\VertPlusI(M)$ can be obtained from $\Ctrl(M)$, as follows. Take the $\gamma \times \kappa$ minor of $\Ctrl(M)$ consisting of the rows with indices in $I$. If there are two columns $w_1, w_2$ in the minor with distinct indices such that $w_1 \in w_2 + O_+$ in $\bar{W}_I$, then remove $w_1$. Repeat the previous step until no such pairs exist, and the result is $\VertPlusI(M)$.

In Lemma \ref{lem_facets_lower_ctrl}, we show how the facets of $\Pi_+(M)$ may be read off the control matrix for certain matrices, an intermediate step in proving Proposition \ref{prop_facets}.

\begin{lemma} \label{lem_facets_lower_ctrl}
Let $M \in \Mat_{N+1}$ be not strictly-upper-triangular. The facets of the corner polyhedron $\Pi_+(M)$ are as follows.
\begin{itemize}
\item (F-1A): For each all-zero row $\rho$ of $\Ctrl(M)$, there is a facet
$$\{ s_\rho \geq 0\}.$$

\item (F-1B): For each nonempty subset $I \subseteq [N]$ such that $\VertPlusI(M) = -1$, there is a facet
$$\{ s_I \geq -1\}.$$
\end{itemize}
\end{lemma}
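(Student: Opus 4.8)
The plan is to extract the facets of $\Pi_+(M)$ from its recession cone, from its coordinate-subspace projections, and from the staircase shape of its vertices established in Proposition~\ref{prop_vertices}. First I would dispatch the degenerate case: if $M$ is upper-triangular but not strictly so, then $\Ctrl(M)=\{0\}$ by Proposition~\ref{prop_vertices}, so $\Pi_+(M)=\Orth$, whose facets are exactly $\{s_\rho\ge 0\}$ for $\rho\in[N]$; every row of the one-column control matrix is zero and no $I$ satisfies the hypothesis of (F-1B), so the lemma holds. Assume henceforth that $M$ is not upper-triangular, so by Proposition~\ref{prop_vertices} the columns of $\Ctrl(M)$ are $\{0,-1\}$-vectors, each a single block of $-1$'s, with both endpoints of the blocks strictly increasing from column to column.

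Next I would pin down the shape of a facet. Since $\Pi_+(M)=\Conv(\Ctrl(M))+\Orth$ is pointed and full-dimensional with recession cone $\Orth$, the inward normal of any facet lies in the dual cone $\bar{R}^+=\R_{\ge 0}\langle s_1,\dots,s_N\rangle$; hence every facet has the form $\{s\ge c\}$ with $s=\sum_i c_i s_i$, $c_i\ge 0$ not all zero, and $c=\min_{w\in\Ctrl(M)}s(w)$. Let $I=\{i:c_i>0\}$. Because $s$ depends only on the coordinates in $I$, the supporting hyperplane $\{s=c\}$ meets $\Pi_+(M)$ in $\pi_I^{-1}(G)\cap\Pi_+(M)$, where $\pi_I\colon\bar{W}\to\bar{W}_I$ is the coordinate projection and $G$ is the $s$-minimal face of $\Pi_+^I(M)$; using the splitting $\Orth=\Orth_I\oplus\R_{\ge0}\langle y_i:i\notin I\rangle$ one gets that this face of $\Pi_+(M)$ has dimension $\dim G+(N-|I|)$. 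So $I$ contributes a facet of $\Pi_+(M)$ precisely when $\Pi_+^I(M)$ has a facet whose normal has full support $I$, and that facet of $\Pi_+(M)$ is then the $\pi_I$-preimage of the one on $\Pi_+^I(M)$.

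It remains to classify, for each nonempty $I$, the full-support-normal facets of $\Pi_+^I(M)=\Conv(\VertPlusI(M))+\Orth_I$ in $\bar{W}_I\cong\R^{|I|}$. Such a facet is bounded, since its recession cone is $\Orth_I\cap\{s=0\}=\{0\}$, hence it is a face of $\Conv(\VertPlusI(M))$, namely the minimal face of a positive functional. I would check that restricting the columns of $\Ctrl(M)$ to the rows of $I$ and then deleting dominated columns preserves the staircase shape; thus $\VertPlusI(M)$ has at most $|I|$ columns, and the minimal face of a positive functional on their convex hull is $(|I|-1)$-dimensional — a facet of $\Pi_+^I(M)$ — precisely when there are exactly $|I|$ columns. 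The strictly-increasing condition then forces those columns to be $-e_1,\dots,-e_{|I|}$, i.e. $\VertPlusI(M)=-\Id_{|I|}$ (equivalently, the condition $\VertPlusI(M)=-1$ of the statement), in which case $\Conv(\VertPlusI(M))$ is the simplex on $\{-e_i\}$, its supporting hyperplane is $\{s_I=-1\}$, and the functionals realizing it are the multiples of $s_I$, so $\{s_I\ge -1\}$ is the unique resulting facet. Running the same argument for singletons $I=\{\rho\}$, where the minimal face is the single vertex $0$ or $-1$, gives $\{s_\rho\ge 0\}$ when row $\rho$ is all zero and $\{s_\rho\ge -1\}$ otherwise; this recovers exactly (F-1A) and the singleton case of (F-1B), and shows each qualifying $I$ yields one facet. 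Assembling the cases gives the lemma.

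I expect the third paragraph to be the crux. The delicate points are: showing that a bounded facet with full-support normal must be the minimal face of $\Conv(\VertPlusI(M))$ and is $(|I|-1)$-dimensional only in the negative-identity case, using reducedness and the staircase shape to rule out everything else; and verifying that the staircase shape genuinely survives restriction to a subset of rows followed by the domination-reduction. The dimension bookkeeping in the projection step also requires care — in particular, showing that the fibre of $\pi_I$ over a point of $G$ meets the $s$-minimal face of $\Pi_+(M)$ in an $(N-|I|)$-dimensional set. The translation of the condition $\VertPlusI(M)=-1$ into the Hessenberg language of Proposition~\ref{prop_facets} is then a separate (and easier) bookkeeping step, deferred to that proof.
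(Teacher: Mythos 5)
Your argument is correct and takes essentially the same route as the paper: both reduce a facet of $\Pi_+(M)$ with normal supported on $I$ to a full-support facet of the projection $\Pi_+^I(M)$ via the Minkowski-sum/recession-cone structure, use the staircase form of $\Ctrl(M)$ from Proposition~\ref{prop_vertices} to show $\VertPlusI(M)$ has at most $\#I$ columns, and identify the square cases as $[0]$ or $-\Id$, yielding exactly the (F-1A) and (F-1B) facets. The only difference is organizational: the paper argues existence and completeness as two separate steps, whereas you package them in one correspondence with the dimension formula $\dim F=\dim G+(N-\#I)$.
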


\begin{proof}
First, we check that these halfspaces are facets of $\Pi_+(M)$. 

If we have an all-zero row $\rho$ of $\Ctrl(M)$, then writing $I = \{\rho\}$, we find $\VertPlusI(M) = 0 \in \Mat_1$. Thus $\Pi_+^I(M)$ has one facet $F$, defined by $\{s_\rho \geq 0\}$. We can see that $\{ s_\rho \geq 0\}$ is a supporting halfspace of $\Pi_+(M)$. To check that it defines a facet, we show
$$\dim (\Pi_+(M) \cap \{ s_\rho = 0 \}) \geq N - 1.$$
To see this, notice that the functional $s_\rho$ is identically $0$ on the convex cone $F'$ generated by $y_i, i \not\in I$. Since $F$ and $F'$ are orthogonal, we have $\dim F + F' = N - 1$, and 
$$F + F' \subseteq \Pi_+(M) \cap \{ s_\rho = 0\}.$$

Suppose that a nonempty subset $I \subseteq [N]$ satisfies $\VertPlusI(M) = -1 \in \Mat_\gamma$. Then $\{s_I = -1\}$ contains $\gamma$ vertices of $\VertPlusI(M)$, and $\Pi^I_+(M) \subset \{s_I \geq -1\}$, so $\{s_I \geq -1\}$ defines a facet $F$ of $\Pi_+^I(M)$. Thus $\{s_I \geq -1\}$ is a supporting halfspace of $\Pi_+(M)$. To check that it defines a facet, we show
$$\dim(\Pi_+(M) \cap \{s_I = -1\}) \geq N - 1.$$
To see this, notice that the functional $s_I$ is identically 0 on the convex cone $F'$ generated by $y_i, i \not\in I$. Since $F$ and $F'$ are orthogonal, we have $$\dim F + F' = \gamma - 1 + (N - \gamma) = N - 1,$$
and $$F + F' \subseteq \Pi_+(M) \cap \{s_I = -1\}.$$

It remains to show that every facet of $\Pi_+(M)$ is of one of these two forms. Suppose that $\{ r > c \}$ is a facet, where $r \in \bar{R}^+$ is a nonzero functional and $c \in \R$. Let $I \subseteq [N]$ be the set of indices $i \in [N]$ such that $r(y_i) \neq 0$. Since $r \neq 0$, the set $I$ is nonempty. Projecting to $\bar{W}_I$, the halfspace $\{ r > c \}$ defines a facet $F$ of $\Pi_+^I(M)$. We claim that $\Pi_+^I(M)$ has $\#I$ vertices. Since $\VertPlusI(M)$ has $\#I$ rows, by the canonical ordering, it has at most $\#I$ columns, so $\Pi_+^I(M)$ has at most $\#I$ vertices. On the other hand, the facet $F$ is the Minkowski sum of some face $F''$ of $\Orth$ in $\bar{W}_I$ and a face of $\Conv(\VertPlusI(M))$. But $F''$ is necessarily the origin, by construction of the set $I$. It follows that $F$ is a facet of the convex polytope $\Conv(\VertPlusI(M))$. Since $\dim F = \#I - 1$, at least $\#I$ elements of $\VertPlusI(M)$ lie on $F$, supplying the claimed number of vertices of $\Pi_+^I(M)$.

Since $\VertPlusI(M)$ is a square matrix, it follows from the canonical ordering that either $\VertPlusI(M) = 0$ or $\VertPlusI(M) = -1 \in \Mat_\gamma$, where $\gamma = \# I$. If $\VertPlusI(M) = 0$, then $I$ is some singleton $\{\rho\}$, and this determines the facet; we conclude $\{r \geq c\} = \{s_\rho \geq 0\}$. If $\VertPlusI(M) = -1$, then the facet containing the necessary $\gamma$ vertices is given explicitly by $\{r \geq c\} = \{s_i \geq -1\}$.
\end{proof}

In Lemma \ref{lem_facets_upper_ctrl}, we show how the facets of $\Pi_+(M)$ may be read off the control matrix for strictly-upper-triangular matrices, another step in proving Proposition \ref{prop_facets}.
\begin{lemma} \label{lem_facets_upper_ctrl}
Let $M \in \Mat_{N+1}$ be strictly upper-triangular and nonzero. The facets of the corner polyhedron $\Pi_+(M)$ are as follows. 
\begin{itemize}
\item (F-2A): For each $\rho \in [N]$ such that $\VertPlusI(M) = 0 \in \Mat_1$, there is a facet
$$\{s_\rho \geq 0\}.$$
\item (F-2B): For each nonempty subset $I \subseteq [N]$ such that $\VertPlusI(M) = 1 \in \Mat_{\#I}$, there is a facet
$$\{s_I \geq 1 \}.$$
\end{itemize}
\end{lemma}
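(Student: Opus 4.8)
The plan is to run the argument of Lemma~\ref{lem_facets_lower_ctrl} a second time, with the orientation of the control points reversed. By Lemma~\ref{lem_root_weights} and Proposition~\ref{prop_vertices}, when $M$ is strictly upper-triangular the control points are the vectors $\barwt(a_{ij})$ attached to the pivotal entries $(i,j)$, each of which lies strictly above the diagonal, and in the $s_1,\dots,s_N$ coordinates each such vector is a single block of $+1$'s rather than a block of $-1$'s; in particular $0\notin\Ctrl(M)$, so the degenerate case $\Ctrl(M)=\{0\}$ of the non-strict situation does not arise here. I would record $\Ctrl(M)$ as the control matrix in the canonical ordering of Proposition~\ref{prop_vertices}, so that (case (3) of the trichotomy following the definition of the control matrix) its columns are blocks of $+1$'s with strictly increasing left endpoints and strictly increasing right endpoints, no column's block contained in another's. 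As in Lemma~\ref{lem_facets_lower_ctrl}, there are two inclusions: that each listed halfspace is a facet, and that every facet is listed.

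For the first inclusion, observe that if $I=\{\rho\}$ and $\VertPlusI(M)=0$ then row $\rho$ of $\Ctrl(M)$ vanishes, so $s_\rho$ is zero on all control points and nonnegative on $\Orth$; and if $\VertPlusI(M)=1\in\Mat_{\#I}$ then $s_I$ is constantly $1$ on the control points of $\Pi_+^I(M)$ and $\Pi_+^I(M)\subseteq\{s_I\ge 1\}$. In both cases the halfspace is supporting, and exactly as in the proof of Lemma~\ref{lem_facets_lower_ctrl}---project to $\bar{W}_I$, where the halfspace cuts out a single vertex (resp.\ the polytope $\Conv(\VertPlusI(M))$, of dimension $\#I-1$), then adjoin the orthogonal cone spanned by the $y_i$ with $i\notin I$, on which $s_\rho$ (resp.\ $s_I$) vanishes identically---one exhibits, after translating by a suitable control point, a subset of $\Pi_+(M)\cap\{s_\rho=0\}$ (resp.\ $\Pi_+(M)\cap\{s_I=1\}$) of dimension $N-1$. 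Hence these halfspaces are facets.

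For the reverse inclusion, let $\{r\ge c\}$ be a facet of $\Pi_+(M)$ with $0\ne r\in\bar{R}^+$, and set $I=\{i\in[N]:r(y_i)\ne 0\}$, which is nonempty. Projecting to $\bar{W}_I$ sends this facet to a facet $F$ of $\Pi_+^I(M)$. Decompose $F$ as a Minkowski sum of a face of $\Orth\cap\bar{W}_I$ and a face of $\Conv(\VertPlusI(M))$; since $r$ is a strictly positive combination of the $s_i$ with $i\in I$, it is strictly positive on $(\Orth\cap\bar{W}_I)\smallsetminus\{0\}$, so the first summand is $\{0\}$ and $F$ is a face of $\Conv(\VertPlusI(M))$. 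As $\dim F=\#I-1$, the polytope $\Conv(\VertPlusI(M))$ is $(\#I-1)$-dimensional and so has at least $\#I$ vertices; since the canonical ordering bounds the number of columns of $\VertPlusI(M)$ by its number of rows $\#I$, the matrix $\VertPlusI(M)$ is square. Because its columns are blocks of $+1$'s with strictly increasing left and right endpoints lying in rows $1,\dots,\#I$ and with no containments, being square forces it to be either $0\in\Mat_1$ (so $I=\{\rho\}$ is a singleton and $\{r\ge c\}=\{s_\rho\ge 0\}$) or the pattern $1\in\Mat_{\#I}$ on which $s_I\equiv 1$ (so $\{r\ge c\}=\{s_I\ge 1\}$).

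The step I expect to be the main obstacle is the very last one: determining which square control matrices of strictly-upper-triangular type actually occur. As in Lemma~\ref{lem_facets_lower_ctrl}, this reduces to the observation that the strict monotonicity of the block endpoints in the canonical ordering, together with the ``no containment'' property that is preserved when passing from $\Ctrl(M)$ to $\VertPlusI(M)$, leaves only the zero matrix in size one and the single ``diagonal'' pattern on which $s_I$ is constantly $1$; everything else is bookkeeping strictly parallel to the non-strict case, with the sign of the control points flipped and the case $0\in\Ctrl(M)$ absent. Lemmas~\ref{lem_facets_lower_ctrl} and~\ref{lem_facets_upper_ctrl} would then be combined, and the $\VertPlusI(M)$-conditions rephrased through Hessenberg functions, to obtain Proposition~\ref{prop_facets}.
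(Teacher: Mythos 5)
Your proposal is correct and follows essentially the same route as the paper's proof: first exhibit each listed halfspace as supporting and of dimension $N-1$ by projecting to $\bar{W}_I$ and adjoining the orthogonal cone spanned by the $y_i$, $i\notin I$, then show that any facet forces $\VertPlusI(M)$ to be a square matrix, which by the canonical ordering (strictly increasing block endpoints) leaves only $[0]$ or the pattern $1\in\Mat_{\#I}$. One small correction: in the (F-2A) step, $\VertPlusI(M)=[0]$ for $I=\{\rho\}$ does \emph{not} imply that row $\rho$ of $\Ctrl(M)$ vanishes (in the paper's last example of Section \ref{sect_examples}, row $1$ of $\Ctrl(M)$ is $(1,0)$ and yet $\{s_1\geq 0\}$ is an (F-2A) facet); it only says some entry of that row is $0$, which, since all entries are $0$ or $1$, is precisely what your ``translate by a suitable control point'' step requires, so the argument goes through unchanged.
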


\begin{proof}
The argument resembles the proof of Lemma \ref{lem_facets_lower_ctrl}. First, we check that these halfspaces are facets of $\Pi_+(M)$.

Suppose that $I = \{\rho\}$ satisfies $\VertPlusI(M) = 0 \in \Mat_1$. By the proof of Lemma \ref{lem_facets_lower_ctrl}, the supporting halfspace $\{s_\rho \geq 0 \}$ defines a facet of $\Pi_+(M)$.

Suppose that a nonempty subset $I \subseteq [N]$ satisfies $\VertPlusI(M) = 1 \in \Mat_\gamma$, where $\gamma = \#I$. Then $\{s_I = 1\}$ contains all $\gamma$ vertices of $\Pi_+^I(M)$, so again by the proof of Lemma \ref{lem_facets_lower_ctrl}, there is a facet of $\Pi_+(M)$ defined by $\{s_I \geq 1\}$.

It remains to show that every facet of $\Pi_+(M)$ is of one of these two forms. By the proof of Lemma \ref{lem_facets_lower_ctrl}, if $\{r > c\}$ defines a facet of $\Pi_+(M)$, and $I = \{i \in [N] \colon r(y_i) \neq 0\}$, then $\VertPlusI(M)$ is a square matrix. It follows from the canonical ordering that either $\VertPlusI(M) = 0$ or $\VertPlusI(M) = 1 \in \Mat_\gamma$, where $\gamma = \#I$. If $\VertPlusI(M) = 0$, then $I$ is some singleton $\{\rho\}$, and this determines the facet; we conclude $\{r \geq c\} = \{s_\rho \geq 0\}$. If $\VertPlusI(M) = 1$, then the facet containing the necessary $\gamma$ vertices is given explicitly by $\{r \geq c\} = \{s_I \geq 1\}$.
\end{proof}

Finally, we prove the main result of this section.

\begin{proof}[Proof of Proposition \ref{prop_facets}]
By Lemma \ref{lem_facets_lower_ctrl} and Lemma \ref{lem_facets_upper_ctrl}, it suffices to prove the equivalence of the conditions for each facet type.

Recall the definition of $E(M)$ from Lemma \ref{lem_minimal_entries}. Observe that, for any not strictly-upper-triangular $M$, and for any $\rho \in [N]$ and $c \in [\kappa]$,
\[
\Ctrl(M)_{\rho c} = 
\begin{cases}
-1, & \text{the } c \text{-th element of } E(M) \text{ is in rectangle } [\rho + 1 , N + 1] \times [1,  \rho],\\
0, & \text{otherwise}.
\end{cases}
\]
Similarly, for any strictly-upper-triangular $M$,
\[
\Ctrl(M)_{\rho c} =
\begin{cases}
1, & \text{the } c \text{-th element of } E(M) \text{ is in rectangle } [1, \rho] \times [\rho + 1, N + 1],\\
0, & \text{otherwise}.
\end{cases}
\]
    \par(F-1A)\enspace We must show that, for a given nonempty subset $I \subseteq [N]$, the condition \eqref{eq_1A} is equivalent to $\VertPlusI(M) = 0 \in \Mat_1$. Since $\gamma = 1$ is equivalent to $\VertPlusI$ having exactly one row, we must show that under this condition, 
    $$\VertPlusI(M) = [0] \Leftrightarrow MS_{i_1} \subseteq S_{i_1} .$$
    
    Suppose that $\VertPlusI(M) = [0]$. By the canonical ordering, row $i_1$ of $\Ctrl_M$ is all zeroes. By the observation, there are no pivotal entries of $M$ in rectangle $[i_1 + 1, N + 1] \times [1, i_1]$. Then, all entries of $M$ in that rectangle must be 0. It follows that $MS_{i_1} \subseteq S_{i_1}$.
    
    For the reverse implication, suppose that $MS_{i_1} \subseteq S_{i_1}$. All entries of $M$ in the rectangle $[i_1 + 1, N + 1] \times [1, i_1]$ are zero, so in particular, there are no pivotal entries of $M$ in that rectangle. Then, by the observation, we deduce that $\Ctrl(M)$ has an all-0 row $i_1$, so $\VertPlusI(M) = [0]$.
    
    \par(F-1B)\enspace For each nonempty $I \subseteq [N]$, we must prove the equivalence 
    $$\VertPlusI(M) = -1 \Leftrightarrow \eqref{eq_1B}.$$
    
    If $I$ satisfies \eqref{eq_1B}, we claim there at least $\#I$ elements of $\VertPlusI(M)$. To each $i \in I$, we can associate an element of $E(M)$, as follows. In column $i$ of $M$, let $(i', i)$ be the bottom-most nonzero entry; then consider the left-most nonzero entry in row $i'$; call this entry $\iota(i)$. The entry $(i', j')$ is in $E(M)$ since the path $\Prof(M)$ borders it to the left and below. By \eqref{eq_1B}, the map $\iota \colon I \to E(M)$ is injective. We may compute the entries of $\VertPlusI(M)$ using the observation to directly calculate that $\VertPlusI(M) = -1$.
    
    Suppose that $I$ does not satisfy \eqref{eq_1B}. Then there is some $i \in I$ such that $S_i = MS_i$ or $MS_i \not\subseteq S_{i'}$, where $i'$ is the next element in $I$. In the first case, the rectangle $[i+1, N + 1] \times [1, i]$ of $M$ is all zero. By the observation, row $i$ of $\VertPlusI(M)$ is all 0, so $\VertPlusI(M) \neq -1$. In the second case, there exists a nonzero entry of $M$ in $[i' + 1, N + 1] \times [ 1, i]$, so there is a pivotal entry in that rectangle. By the observation, the corresponding column of $\Ctrl(M)$ has a $-1$ in row $i$ and $i'$, so $\VertPlusI(M) \neq -1$.
    
    \par(F-2A)\enspace We must show that, for nonzero strictly-upper-triangular $M$,
    $$\VertPlusI(M) = [0] \Leftrightarrow \eqref{eq_2A}.$$
    Suppose that $\VertPlusI(M) \neq [0]$. By the canonical ordering, row $i_1$ of $\Ctrl(M)$ is all 1's. By the observation, every pivotal entry of $M$ is in the rectangle $[1, i_1] \times [i_1 + 1, N + 1]$. Then every nonzero entry of $M$ is in that rectangle, so \eqref{eq_2A} does not hold. For the reverse implication, if \eqref{eq_2A} does not hold, then every entry of $M$ is in that rectangle, so every pivotal entry of $M$ is in that rectangle, so $\VertPlusI(M) = [1].$
    
    \par(F-2B)\enspace We must show that, for nonzero strictly-upper-triangular $M$, for each nonempty $I \subseteq [N]$,
    $$\VertPlusI(M) = 1 \Leftrightarrow \eqref{eq_2B}.$$
    Suppose that $\VertPlusI(M) = 1$. Then, using the observation, in the notation of Proposition \ref{prop_facets}, for each $0 \leq t \leq \#I$, the rectangle $[i_t + 1, N + 1] \times [1, i_{t+1}]$ of $M$ has no pivotal entries, and for each $1 \leq t \leq \#I$, there is a pivotal entry of $M$ in $[i_{t - 1} + 1, i_t] \times [1, i_{t+1}]$. It follows that the rectangle $[i_t + 1, N + 1] \times [1, i_{t+1}]$ is all 0's and there is a nonzero entry in $[i_{t - 1} + 1, i_t] \times [1, i_{t+1}]$, implying \eqref{eq_2B}.
    
    Conversely, suppose that \eqref{eq_2B} holds. For each $0 \leq t \leq \#I$, the rectangle $[i_t + 1, N + 1] \times [1, i_{t+1}]$ is all 0's, and for each $1 \leq t \leq \#I$, there is a nonzero entry in $[i_{t - 1} + 1, i_t] \times [1, i_{t+1}]$. It follows that the map $\iota$ of (F-1B) is injective, so there are at least $\#I$ elements of $\Ctrl (M)$. Using the observation, we calculate explicitly that $\VertPlusI(M) = 1.$
\end{proof}

\subsection{Examples} \label{sect_examples}

In the following examples, we fix some $N \geq 1$ and consider some $(N + 1) \times (N + 1)$ matrix $M$. We mark nonzero entries by $\bm{\ast}$, and entries with no conditions on them by $\ast$.

\begin{example} \label{ex_generic}
Suppose that entry $(N+1, 1)$ of $M$ is nonzero. Then the profile $\Prof(M)$ is the lower Dyck path
\[
\left[
\begin{array}{ccc}
\stairbarast & \hdots & \ast \\
\stairbarvdots & \ddots & \vdots \\
\piv & \hdots & \ast \\ \cline{1-3}
\end{array}
\right].
\]
By Proposition \ref{prop_vertices}, the control matrix $\Ctrl(M)$ is the $N \times 1$ matrix with every entry equal to $-1$. Thus the facets of $\Pi_+(M)$ are, for each $i \in [N]$,
$$\{ s_i \geq -1 \}.$$ 
This is obvious because $\Ctrl(M)$ has just one column, but we can also see it as a special case of Proposition \ref{prop_facets} in which the facets are all of type (F-1B).

This is the least degenerate matrix $M$, and the corner polyhedron is as simple as can be. But note that, even though a generic matrix is described by this example, a generic linear transformation $T$ is not, since there is always some basis in which $T$ is of a different form.
\end{example}

\begin{example} \label{ex_most_degen}
Suppose that the only nonzero entry of $M$ is $(1, N + 1)$. Then the profile $\Prof(M)$ is the upper Dyck path
\[
\left[
\begin{array}{cccc}
\cline{1-3}
0 & \hdots & 0 & \piv \\ \cline{4-4}
0 & \ddots & 0 & \stairzerobar \\
\vdots & \ddots & \ddots  & \stairvdotsbar \\
0 & \hdots & 0 & \stairzerobar \\
\end{array}
\right].
\]
We can think of this matrix $M$ as the worst-case scenario for stability. By Proposition \ref{prop_vertices}, the control matrix $\Ctrl(M)$ is the $N \times 1$ matrix with every entry equal to 1. Since there is only one column of $\Ctrl(M)$, the corner polyhedron $\Pi_+(M)$ has facets $\{s_i \geq 1\}$, for each $i \in [N]$. As a special case of Proposition \ref{prop_facets}, the facets are all of type (F-2B).
\end{example}

\begin{example} \label{ex_main_stair}
Suppose that $M$ is upper-triangular and entry $(1,1)$ is nonzero. This example includes the case $M = 1$. Since there is a nonzero entry on the main diagonal, the profile $\Prof(M)$ is a lower Dyck path, specifically
\[
\left[
\begin{array} {cccc}
\piv & \ast & \hdots & \ast \\ \cline{1-1}
0 & \stairbarast & \hdots & \ast \\ \cline{2-2}
0 & 0 & \ddots & \ast \\
0 & 0 & \hdots & \stairbarast \\ \cline{4-4}
\end{array}
\right].
\]
By Proposition \ref{prop_vertices}, the control matrix $\Ctrl(M)$ is the $N \times 1$ matrix with every entry equal to 0. Thus $\Pi_+(M)$ is just the nonnegative orthant. The corner polyhedron $\Pi_+(M)$ has facets $\{s_i \geq 0\}$, for each $i \in [N]$. As a special case of Proposition \ref{prop_facets}, all the facets are of type (F-1A).

When $N = 2$, we can draw $\Pi_+(M)$ in the plane:
\begin{center}
\begin{tikzpicture}[line cap=round,line join=round,x=0.4cm,y=0.4cm]
\clip(-2,-2) rectangle (2.5,2.5);
\draw [line width=1pt] (-2,0)-- (3,0);
\draw [line width=1pt] (0,-2)-- (0,3);
\fill[line width=2pt,fill=gray] (0,0) -- (3,0) -- (3,3) -- (0,3) -- cycle;
\draw [line width=2pt] (0,0)-- (3,0);
\draw [line width=2pt] (3,0)-- (3,3);
\draw [line width=2pt] (3,3)-- (0,3);
\draw [line width=2pt] (0,3)-- (0,0);
\begin{scriptsize}
\draw [fill=black] (0,0) circle (2.5pt);
\end{scriptsize}
\end{tikzpicture}
\end{center}
\end{example}

Now we get to some more interesting examples.

\begin{example} \label{ex_lower_stair}
Suppose that $M$ has entry $(i + 1, i)$ nonzero for each $1 \leq i < N + 1$, and the entries $(i, j)$ with $j > i + 1$ are all 0. Then the profile of $M$ is the lower Dyck path shown: 
\[
\small
\left[
\begin{array}{ccccccc}
\stairbarast & \ast & \hdots & \ast & \ast & \ast \\
\piv & \ast & \hdots & \ast & \ast & \ast\\ \cline{1-1}
0 & \piv & \hdots & \ast & \ast & \ast \\ \cline{2-2}
\vdots & \vdots & \ddots & \vdots & \vdots & \vdots \\
0 & 0 & \hdots & \piv & \ast & \ast \\ \cline{4-4}
0 & 0 & \hdots & 0 & \piv & \ast \\ \cline{5-6}
\end{array}
\right]
\]
By Proposition \ref{prop_vertices}, we have $\Ctrl(M) = -1 \in \Mat_N$. Thus there are $N$ vertices of $\Pi_+(M)$. By Proposition \ref{prop_facets}, the facets of $\Pi_+(M)$ are, for each nonempty subset $I \subseteq [N]$, given by
$$\left\{ \sum_{i \in I} s_i \geq -1 \right\}.$$ Therefore there are $2^N - 1$ facets, all of type (F-1B).

When $N = 2$, we can draw $\Pi_+(M)$ in the plane:
\begin{center}
\begin{tikzpicture}[line cap=round,line join=round,x=0.4cm,y=0.4cm]
\clip(-2,-2) rectangle (2.5,2.5);
\fill[line width=2pt,fill=gray] (-1,0) -- (0,-1) -- (3,-1) -- (3,3) -- (-1,3) -- cycle;
\draw [line width=1pt] (-2,0)-- (3,0);
\draw [line width=1pt] (0,-2)-- (0,3);
\draw [line width=2pt] (-1,0)-- (0,-1);
\draw [line width=2pt] (0,-1)-- (3,-1);
\draw [line width=2pt] (3,-1)-- (3,3);
\draw [line width=2pt] (3,3)-- (-1,3);
\draw [line width=2pt] (-1,3)-- (-1,0);
\begin{scriptsize}
\draw [fill=black] (-1,0) circle (2.5pt);
\draw [fill=black] (0,-1) circle (2.5pt);
\end{scriptsize}
\end{tikzpicture}
\end{center}
\end{example}

\begin{example} \label{ex_upper_stair}
Suppose that $M$ has entry $(i, i + 1)$ nonzero for each $1 \leq i < N + 1$, and all entries on or below the main diagonal are equal to $0$. Then $\Prof(M)$ is the upper Dyck path shown:
\[
\small
\left[
\begin{array}{cccccc}
\cline{1-1}
0 & \piv & \ast & \hdots & \ast & \ast \\ \cline{2-2}
0 & 0 & \piv & \hdots & \ast & \ast \\ \cline{3-3}
\vdots & \vdots & \ddots & \ddots & \vdots & \vdots \\
0 & 0 & 0 & \ddots & \piv & \ast \\ \cline{5-5}
0 & 0 & 0 & \hdots & 0 & \piv \\ \cline{6-6}
0 & 0 & 0 & \hdots & 0 & \stairzerobar
\end{array}
\right]
\]
By Proposition \ref{prop_vertices}, we have $\Ctrl(M) = 1 \in \Mat_N$. Thus there are $N$ vertices of $\Pi_+(M)$. By Proposition \ref{prop_facets}, there are $N + 1$ facets: a Type (F-2A) facet $\{s_i \geq 0\}$ for each $i \in [N]$, and a Type (F-2B) facet $\{\sum_{i \in I} s_i \geq 1\}$ corresponding to $I = [N]$. Comparing with Example \ref{ex_lower_stair}, we see that the seemingly dual cases $\Ctrl(M) = 1$ and $\Ctrl(M) = -1$ produce quite different corner polyhedra.

When $N = 2$, we can draw $\Pi_+(M)$ in the plane:
\begin{center}
\begin{tikzpicture}[line cap=round,line join=round,x=0.4cm,y=0.4cm]
\clip(-2,-2) rectangle (2.5,2.5);
\draw [line width=1pt] (-2,0)-- (3,0);
\draw [line width=1pt] (0,-2)-- (0,3);
\fill[line width=2pt,fill=gray] (0,1) -- (1,0) -- (3,0) -- (3,3) -- (0,3) -- cycle;
\draw [line width=2pt] (0,1)-- (1,0);
\draw [line width=2pt] (1,0)-- (3,0);
\draw [line width=2pt] (3,0)-- (3,3);
\draw [line width=2pt] (3,3)-- (0,3);
\draw [line width=2pt] (0,3)-- (0,1);
\begin{scriptsize}
\draw [fill=black] (0,1) circle (2.5pt);
\draw [fill=black] (1,0) circle (2.5pt);
\end{scriptsize}
\end{tikzpicture}
\end{center}
\end{example}

\begin{example}
Let $N = 7$. We present a more elaborate example of matrix with a lower Dyck path as its profile:
\[
M = 
\tiny
\left[
\begin{array}{cccccccc}
\stairbarast & \ast & \ast & \ast & \ast & \ast & \ast & \ast \\
\stairbarast & \ast & \ast & \ast & \ast & \ast & \ast & \ast \\
\piv & \ast & \ast & \ast & \ast & \ast & \ast & \ast \\ \cline{1-1}
0    & \stairbarast & \ast & \ast & \ast & \ast & \ast & \ast \\
0    & \piv & \ast & \ast & \ast & \ast & \ast & \ast \\ \cline{2-5}
0    & 0    & 0    & 0    & 0    & \stairbarzero & \ast & \ast \\ \cline{6-6}
0    & 0    & 0    & 0    & 0    & 0    & \stairbarast & \ast \\
0    & 0    & 0    & 0    & 0    & 0    & \piv & \ast \\ \cline{7-8}
\end{array}
\right].
\]
We emphasize that entry $(6,6)$ is 0, but that the profile is a lower Dyck path by definition, thus must go below entry $(6,6)$. By Proposition \ref{prop_vertices},
\[
\Ctrl(M) =
\tiny
\begin{bmatrix}
-1 & 0 & 0 \\
-1 & -1 & 0 \\
0 & -1 & 0 \\
0 & -1 & 0 \\
0 & 0 & 0 \\
0 & 0 & 0 \\
0 & 0 & -1 \\
\end{bmatrix}.
\]
By Proposition \ref{prop_facets}, there are Type (F-1A) facets
$$\{s_5 \geq 0\}, \quad \{s_6 \geq 0\},$$
and there are Type (F-1B) facets $\{ s_I \geq -1 \}$ for each 
\begin{align*}
    I \in \{
    & \{ 1 \}, \{ 2 \}, \{ 3 \}, \{ 4 \}, \{ 7 \}, \\
    & \{1, 3\}, \{1, 4\}, \{1, 7\}, \\
    & \{1, 3, 7\}, \{1, 4, 7\} \}.
\end{align*}
\end{example}

\begin{example}
Consider
\[
M = 
\tiny
\left[
\begin{array}{ccccc}
\cline{1-2}
0 & 0 & \piv & \ast & \ast \\ \cline{3-3}
0 & 0 & 0 & \piv & \ast \\ \cline{4-5}
0 & 0 & 0 & 0 & \stairzerobar \\
0 & 0 & 0 & 0 & \stairzerobar \\
0 & 0 & 0 & 0 & \stairzerobar \\
\end{array}
\right].
\]
By Proposition \ref{prop_vertices},
\[
\tiny
\Ctrl(M) =
\begin{bmatrix}
1 & 0 \\
1 & 1 \\
0 & 1 \\
0 & 0 \\
\end{bmatrix}.
\]
By Proposition \ref{prop_facets}, the Type (F-2A) facets are $\{s_i \geq 0\}$ for each $i \in \{1, 3, 4\}$, and the Type (F-2B) facets are $\{ s_I \geq 1 \}$ for each 
$I \in \{ \{2\}, \{1, 3\} \}.$

\end{example}

\section{Linear maps with marked points} \label{sect_proof}

In this section, we describe GIT stability for marked linear maps relative to any linearization. We get Theorem \ref{thm_main} and the other results of the introduction as corollaries.

To describe GIT stability for marked linear maps, we plug the facet computation of Proposition \ref{prop_facets} into the Hilbert-Mumford criterion for corner polyhedra, Proposition \ref{prop_corner_numerical}. Then we show that these stability tests can be checked on just Type I, II, and III flags, by studying the ways to embed these flags in complete flags. The underlying idea is that, any time a facet of a corner polyhedron imposes a stability test, we can find the same condition imposed by some Type I, II, or III flag. 

This technique should be compared to Mumford's stability test for point configurations, Theorem \ref{thm_classic_git}. In that proof, it was unnecessary to test stability with every complete flag, since the facet conditions were very simple and could be checked on individual subspaces. Our method here is similar, but we have to consider all four kinds of facets described in Section \ref{sec_facets}. The main difficulty is in setting up the necessary terminology to keep track of the various flag and facet types.

We recall our setting. Let $N, n \in \N$. We define $\LEnd_{N+1} \cong \A^{N^2 + 2N + 1}$ to be the space of linear endomorphisms $k^{N+1} \to k^{N+1}$. We write $\PE$ for the projectivization of the nonzero elements of $\LEnd_{N+1}$. Let $\Twip = (\PP^N)^n \times \PE$. We write its elements as
$$(T,v) = (T, v_1, \hdots, v_n).$$
Note that $\PE$ is not a group, but it nevertheless admits the conjugation action of $\SL_{N+1}$. 
We study the action \eqref{eq_acn} of $\SL_{N+1}$ on $\Twip$, which we restate for convenience:
$$A \cdot (T,v) = (ATA^{-1}, Av_1, \hdots, Av_n).$$

A \emph{flag} $\mc{H}$ in $k^{N+1}$ is a sequence, of some length $\gamma \in [N]$, of linear subspaces
$$0 \subsetneq H_1 \subsetneq \hdots \subsetneq H_\gamma \subset k^{N+1}.$$
A flag is \emph{complete} if $\gamma = N + 1$ and $H_\gamma = k^{N+1}$. Equivalently, a flag is complete if, for all $i \in [\gamma]$, we have $\dim V_i = i$.

A \emph{completion} of a flag $\mc{H}$ is a complete flag $\mc{H}'$ such that every subspace appearing in $\mc{H}$ also appears in $\mc{H}'$. A \emph{completed flag pair} $(\mc{H}, \mc{H}')$ is a pair of flags such that $\mc{H}'$ is a completion of $\mc{H}$.

Each basis $b_1, \hdots, b_{N+1}$ of $k^{N+1}$ has an \emph{associated flag}: for each $i = 1, \hdots, N + 1$, let
$$H_i = \Span(b_1, \hdots, b_i).$$

If $\mc{H}$ is a complete flag, a \emph{flag basis} is any basis which has $\mc{H}$ as its associated flag. Every complete flag has a flag basis.

Fix $T \in \PE$. We define some special types (P-1A), (P-1B), (P-2A), (P-2B) of completed flag pair, with respect to $T$. Note that some completed flag pairs do not belong to any of these types. 
\begin{itemize}
    \item A completed flag pair
    $(\mc{H}, \mc{H}')$ is \emph{Type} (P-1A) if:
    \begin{itemize}
        \item $\mc{H}$ consists of just one space $H$, and
        \item In any associated basis of $\mc{H}'$, the matrix $M$ of $T$ is not strictly-upper-triangular, and
        \item the matrix $M$ has property \eqref{eq_1A}, where $i_1 \in [N]$ is the value such that $H'_{i_1} = H$.
    \end{itemize}
    \item A completed flag pair $(\mc{H}, \mc{H}')$ is of \emph{Type} (P-1B) if:
    \begin{itemize}
        \item In any associated basis of $\mc{H}'$, the matrix $M$ of $T$ is not strictly-upper-triangular, and
        \item The matrix $M$ has property \eqref{eq_1B}, where $I$ is the set of indices determined by the inclusion of $\mc{H}$ in $\mc{H}'$.
    \end{itemize}
    \item A completed flag pair $(\mc{H},\mc{H'})$ is \emph{Type} (P-2A) if:
    \begin{itemize}
        \item $\mc{H}$ consists of just one space $H$, and
        \item The matrix $M$ of $T$ in any associated basis of $\mc{H}'$ is strictly upper triangular, and
        \item The matrix $M$ has property \eqref{eq_2A}, where $i_1 \in [N]$ is the value such that $H'_{i_1} = H$.
    \end{itemize} 
    \item A completed flag pair $(\mc{H}, \mc{H}')$ is of \emph{Type} (P-2B) if:
    \begin{itemize}
        \item In any associated basis of $\mc{H}'$, the matrix $M$ of $T$ is not strictly-upper-triangular, and
        \item The matrix $M$ has property \eqref{eq_2B}, where $I$ is the set of indices determined by the inclusion of $\mc{H}$ in $\mc{H}'$.
    \end{itemize}
\end{itemize}

We recall the three types I, II, III of a flag $\mc{H}$ relative to $T$ that were defined in Theorem \ref{thm_main}. Given a flag $\mc{H}$ and $T \in \PE$, the \emph{Hessenberg function of $\mc{H}$ relative to $T$} is defined by
\begin{align*}
    \Hess_{T,\mc{H}}  : \; & \{0, \hdots, \gamma + 1\} \to \{0, \hdots \gamma + 1\}, \\
    & i \mapsto \min \{j : TH_i \subseteq H_j\}.
\end{align*}
Given a flag $\mc{H}$ and a point $T\in\PE$, we say that~$\mc{H}$ is of \emph{Type~I,~II, or~III relative to~$T$} if it has the following properties, if any:
\begin{itemize}
    \item \textup{Type I:}  $\gamma = 1$, and $\mc{H} = (H_1)$ satisfies either
    \[
    \Bigl( 0 \neq TH_1 \subseteq H_1 \Bigr)
    \qquad\text{or}\qquad
    \Bigl(TH_1 \subseteq H_1 \quad \text{and} \quad T(k^{N+1}) \not\subseteq H_1
    \Bigr).
    \]
    \item \textup{Type II:} The Hessenberg function $\Hess_{T, \mc{H}}$ satisfies, for all $t$ in the range $1 \leq t \leq \gamma,$
$$\Hess_{T, \mc{H}}(t) = t + 1.$$
    \item \textup{Type III:} The Hessenberg function $\Hess_{T, \mc{H}}$ satisfies, for all $t$ in the range $1 \leq t \leq \gamma + 1$,
$$\Hess_{T, \mc{H}}(t) = t - 1.$$
\end{itemize}

\begin{example}
We give some examples of Type I, II, and III flags.
\begin{itemize}
    \item For any $T \in \PE$, every flag consisting of just one subspace $H$ is of Type I, II, or III. 
    \item Any non-nilpotent $T \in \PE$ has flags of Type I, taking $\mc{H} = H_1$ to be the eigenspace of any nonzero eigenvalue.
    \item If $T = 1$, then every flag is Type I, and we recover the condition \eqref{eq_mumford_template} of Mumford's example.
    \item Suppose $T \neq 1$ and $T$ is invertible. Then there is some vector $v$ such that $v \neq T(v) \neq 0$. Let $H_1 = \Span(v)$ and $H_2 = \Span(v, T(v))$. The flag 
    $H_1 \subsetneq H_2$ is Type II. 
    \item Suppose $T$ has a cyclic vector; that is, there exists $v \in k^{N+1}$ such that $v, T(v), \hdots, T^{N}(v)$ form a basis. For each $i = 1, \hdots, N$, let
    $$H_i = \Span(v, T(v), \hdots, T^{i-1}(v)).$$
    The flag $H_1 \subsetneq \hdots \subsetneq H_{N}$ is Type II. 
    \item Suppose the Jordan form of $T$ has all 0's on the diagonal and all 1's on the first off-diagonal. Then there exists $v \in k^{N+1}$, such that for all $0 \leq i < N$, we have $T^i(v) \neq 0$, and $T^N(v) = 0$. Let
    $$H_i = \Span(v, T(v), \hdots, T^{i-1}(v)).$$
    The flag $H_1 \subsetneq \hdots \subsetneq H_{N}$ is Type III.
    \item Type III flags exist only if $T$ is nilpotent. But a nilpotent map $T$ may still have flags of Type I and II.
\end{itemize}
\end{example}

\begin{remark}
One can also define Type I, II, and III flags without reference to Hessenberg functions, as follows.
\begin{itemize}
    \item \textup{Type I:}  $\gamma = 1$, and $\mc{H} = (H_1)$ satisfies either
    \[
    \Bigl( 0 \neq TH_1 \subseteq H_1 \Bigr)
    \qquad\text{or}\qquad
    \Bigl(TH_1 \subseteq H_1 \quad \text{and} \quad T(k^{N+1}) \not\subseteq H_1
    \Bigr).
    \]
    \item \textup{Type II:} For all $t$ in the range $1 \leq t \leq \gamma$, we have
    $$TH_t \not\subseteq H_t \quad \text{and} \quad TH_t \subseteq H_{t+1}.$$
    \item \textup{Type III:} With the convention that $H_{-1} = \emptyset$, for all $t$ in the range $0 \leq t \leq \gamma$, we have
    $$TH_{t+1} \subseteq H_t \quad \text{and} \quad TH_{t+1} \not\subseteq H_{t-1} .$$
\end{itemize}
\end{remark}

For any point $v \in \PP^N$ and flag $\mc{H}$, we define
$$\Omega(v, \mc{H}) := \sum_{j \in [\gamma]} \left(
    \sum_{i=1}^n \frac{-j m_i}{N+1} + \sum_{i \in [n]: \; X_{j+1}(v_i) = \hdots = X_{N+1}(v_i) = 0} m_i
     \right).$$
It arises in our calculations as follows. Recall the notation $\eta(v)$ from \eqref{eq_config_vector}. If $\mc{H'}$ is any completion of $\mc{H}$, and the spaces in $\mc{H}$ have indices corresponding to $I \subseteq [N]$ in $\mc{H}'$, we have
\begin{equation} \label{eq_omega_functional}
    \Omega(v, \mc{H}) = s_I( \eta(v) ).
\end{equation}

\begin{remark}
The function $\Omega(v, \mc{H})$ measures how much the tuple $v$ intersects the flag $\mc{H}$. The more intersections there are between points of $v$ and subspaces in $\mc{H}$, the larger the value of $\Omega(v, \mc{H})$. Notice that the weighted count $\Omega(v, \mc{H})$ is unrelated to the space $\PE$; it is solely a function of the $n$-tuple $v$ and the flag $\mc{H}$. To give an idea of how the quantity $\Omega(v, \mc{H})$ behaves, observe that the following are equivalent:
\begin{enumerate}
    \item The tuple $v \in (\PP^N)^n$ is semistable relative to $\shO(\bm{m}).$
    \item For every linear subspace $H$, we have $\Omega(v, H) \leq 0.$
    \item For every flag $\mc{H}$, we have $\Omega(v, \mc{H}) \leq 0.$
\end{enumerate}
The equivalence of (1) and (2) is just the result of Theorem \ref{thm_classic_git}. The equivalence of (2) and (3) follows from the fact that, for any flag $\mc{H}$, the sum $\Omega(v, \mc{H})$ is computed over the individual subspaces making up $\mc{H}$.
\end{remark}

Finally we prove our main result, which describes GIT stability of marked linear maps for any sheaf. We prove Theorem \ref{thm_main} as a special case.

\begin{thm} \label{thm_main_any_sheaf}
Consider $\Twip$ with the action of $\SL_{N+1}$ and sheaf $\shL = \shO(q, \bm{m})$. For each flag $\mc{H}$ in $k^{N+1}$ of Type I, II, or III, define
\[
c(T, \mc{H}) = 
\begin{cases}
0, & \mc{H} \text{ is Type I}, \\
1, & \mc{H} \text{ is Type II}, \\
-1, & \mc{H} \text{ is Type III}. \\
\end{cases}
\]
Then $(T, v) \in \Twip$ is GIT semistable relative to $\shL$ if and only if, for every Type I, II, or III flag $\mc{H}$, we have
    \begin{equation} \label{eq_flag_solo}
        \Omega(v, \mc{H}) \leq q c(T, \mc{H}).
    \end{equation}
A point $(T, v) \in \Twip$ is GIT stable relative to $\shL$ if and only if, for every Type I, II, or III flag $\mc{H}$, the inequality \eqref{eq_flag_solo} is strict.
\end{thm}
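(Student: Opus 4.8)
The plan is to run the Hilbert--Mumford criterion through the corner-polyhedron formalism of Proposition~\ref{prop_corner_numerical}, feed in the facet list of Proposition~\ref{prop_facets}, and translate the resulting system of facet inequalities into the inequalities~\eqref{eq_flag_solo} indexed by Type~I, II, and III flags. For the first step, note that every $1$-parameter subgroup of $\SL_{N+1}$ is along some ordered basis $B$, so $(T,v)$ is semistable (resp.\ stable) if and only if it is $B$-semistable (resp.\ $B$-stable) for every $B$, which by Proposition~\ref{prop_corner_numerical} means $0 \in \Pi_+(T,v)$ (resp.\ $0 \in \Pi_+(T,v)^\circ$) computed in each $B$. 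Writing $\shL = \shO(q,\bm m)$ as $\shO(q)$ on $\PE$ tensored with $\shO(\bm m)$ on $(\PP^N)^n$, Lemmas~\ref{lem_segre} and~\ref{lem_veronese} give that the weight polytope of $(T,v)$ is $q\,\Pi(T) + \Pi(v)$ with $\Pi(T)$ the weight polytope of $T$ for $\shO(1)$; projecting to $\bar W$, taking corner hulls, and using $\Corner(\zeta+\zeta') = \Corner(\zeta)+\Corner(\zeta')$, $\Corner(q\zeta)=q\Corner(\zeta)$, together with Proposition~\ref{prop_tuple} (the configuration factor has the single control point $\eta(v)$), one obtains
\[
\Pi_+(T,v) \;=\; q\,\Pi_+(T) + \eta(v),
\]
with $\Pi_+(T)$ for $\shO(1)$ in the basis $B$. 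Hence $0 \in \Pi_+(T,v)$ iff $-\eta(v) \in q\,\Pi_+(T)$, and, since $\Pi_+(T,v)$ is full-dimensional, $0 \in \Pi_+(T,v)^\circ$ iff $-\eta(v)$ lies strictly inside every facet halfspace of $q\,\Pi_+(T)$.

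\textbf{Facet inequalities.} Writing $q\,\Pi_+(T)$ as the intersection of its facet halfspaces and applying Proposition~\ref{prop_facets} (rescaled by $q$), these are $\{s_I \geq qc\}$ with $c \in \{0,-1,1\}$, one for each nonempty $I = \{i_1 < \cdots < i_\gamma\}\subseteq[N]$ satisfying~\eqref{eq_1A},~\eqref{eq_1B},~\eqref{eq_2A}, or~\eqref{eq_2B} for the matrix $M$ of $T$ in $B$. Thus $-\eta(v)\in q\,\Pi_+(T)$ iff $s_I(-\eta(v))\geq qc$ for every such $I$; letting $\mc H$ be the subflag of the standard flag of $B$ cut out by $I$ and using $\Omega(v,\mc H)=s_I(\eta(v))$ from~\eqref{eq_omega_functional}, this reads $\Omega(v,\mc H)\leq -qc$. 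The constants match $q\,c(T,\mc H)$: facet types (F-1A) and (F-2A) have $c=0$, giving $\Omega(v,\mc H)\leq 0$; type (F-1B) has $c=-1$, giving $\Omega(v,\mc H)\leq q$; type (F-2B) has $c=1$, giving $\Omega(v,\mc H)\leq -q$. So each facet, over all bases $B$, contributes exactly one instance of~\eqref{eq_flag_solo}, and the stable case is word-for-word the same with strict inequalities.

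\textbf{The flag/facet dictionary.} It remains to identify the flags produced above with the Type~I, II, III flags relative to $T$. Forward: a facet yields a completed flag pair $(\mc H,\mc H')$ of one of the types (P-1A), (P-1B), (P-2A), (P-2B), and unpacking the defining conditions shows $\mc H$ is then Type~I (from~\eqref{eq_1A} or~\eqref{eq_2A}), Type~II (from~\eqref{eq_1B}, i.e.\ $\Hess_{T,\mc H}(t)=t+1$), or Type~III (from~\eqref{eq_2B}, i.e.\ $\Hess_{T,\mc H}(t)=t-1$); since $\Hess_{T,\mc H}$ depends only on $\mc H$ and $T$, the value $c(T,\mc H)$ is unambiguous, and the three types are mutually exclusive (for $\gamma=1$ they force $\Hess_{T,\mc H}(1)\in\{1,2,0\}$ respectively; for $\gamma\geq 2$ only II and III occur and still differ), which also proves Theorem~\ref{thm_main}(a). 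Backward, one must show every Type~I, II, or III flag $\mc H$ admits a completion $\mc H'$ making $(\mc H,\mc H')$ of the matching type, so the corresponding inequality is genuinely imposed: for Type~II \emph{every} completion works, because $TH_\gamma\not\subseteq H_\gamma$ forces a nonzero subdiagonal entry of $M$ (so $M$ is not strictly-upper-triangular) while~\eqref{eq_1B} is intrinsic to $\mc H$; for Type~III the relations $TH_t\subseteq H_{t-1}$ make any refinement $\mc H'$ satisfy $MS_j\subseteq S_{j-1}$ for all $j$, so $M$ is strictly-upper-triangular and~\eqref{eq_2B} holds; for Type~I one splits on whether $T|_{H_1}\neq 0$ (choose $\mc H'$ so the $H_1$-block of $M$ is non-strictly-upper-triangular, realizing~\eqref{eq_1A}) or $T|_{H_1}=0$ with $T(k^{N+1})\not\subseteq H_1$ (if $T$ is non-nilpotent any completion realizes~\eqref{eq_1A}; if $T$ is nilpotent, a $T$-compatible refinement through $H_1$ makes $M$ strictly-upper-triangular with $MS_{N+1}\not\subseteq S_{i_1}$, realizing~\eqref{eq_2A}). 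Combining the two directions with the facet step proves Theorem~\ref{thm_main_any_sheaf}, and Theorem~\ref{thm_main} follows on taking $\bm m=(1,\dots,1)$, for which $\Omega(v,\mc H)=\sum_j\#\{i:v_i\in H_j\}-\tfrac{n}{N+1}\sum_j\dim H_j$ converts~\eqref{eq_flag_solo} into~\eqref{eq_main}.

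\textbf{Expected main obstacle.} The crux is the backward direction of the dictionary: reconciling the four combinatorial facet conditions of Proposition~\ref{prop_facets}, which are phrased relative to a chosen completion and turn on the strictly-upper-triangular dichotomy, with the basis-free trichotomy of flag Types — in particular, producing for each flag of a given Type a completion of the matching completed-flag-pair type while controlling whether $M$ becomes strictly-upper-triangular, and checking along the way that no inequality is double-counted so that $c(T,\mc H)$ is well defined. Everything else is bookkeeping on top of Propositions~\ref{prop_corner_numerical},~\ref{prop_tuple}, and~\ref{prop_facets}.
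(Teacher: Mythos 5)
Your proposal is correct and follows essentially the same route as the paper's proof: the Hilbert--Mumford criterion via corner polyhedra (Proposition \ref{prop_corner_numerical}), the decomposition $\Pi_+(T,v)=q\,\Pi_+(M)+\eta(v)$ from Lemmas \ref{lem_segre}, \ref{lem_veronese} and Proposition \ref{prop_tuple}, the facet list of Proposition \ref{prop_facets}, and the same dictionary between facets (via completed flag pairs) and Type I, II, III flags, including the construction of a suitable completion for each flag type. The only deviations are cosmetic---you realize the nilpotent Type I subcase through an (F-2A) facet where the paper uses (F-1A) (both give the constant $0$), and your parenthetical claim that Type I forces $\Hess_{T,\mc H}(1)=1$ should allow $\Hess_{T,\mc H}(1)\in\{0,1\}$, with exclusivity from Type III then seen from $\Hess_{T,\mc H}(2)$---neither of which affects the argument.
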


\begin{proof}
Given a basis $B$ of $k^{N+1}$, let $\Pi_+(T,v)$ denote the corner polyhedron of $(T,v)$. By Proposition \ref{prop_corner_numerical}, the point $(T,v)$ is $(B,\shL)$-semistable if and only if, in the $B$-aligned basis $B'$ of $\shL$ obtained by Segre and Veronese maps, we have 
$$0 \in \Pi_+(T,v).$$
Let $M$ be the matrix of $T$ in basis $B$, and let $\Pi_+(M)$ be the associated corner polyhedron of Type $A_N$, which is by definition the corner polyhedron of $M$ relative to $\shO(1)$. Let $\Pi_+(v)$ be the corner polyhedron of $v$ relative to $\shO(\bm{m})$. By Lemma \ref{lem_segre} and \ref{lem_veronese},
$$ \Pi_+(T,v) = \Pi_+(v) + q \Pi_+(M).$$

By Proposition \ref{prop_tuple}, the corner polyhedron $\Pi_+(v)$ has just one vertex $\eta(v)$. Thus $(B,\shL)$-semistability is equivalent to
$$0 \in q \Pi_+(M) + \eta(v).$$
Equivalently,
$$-\eta(v) \in q \Pi_+(M).$$

By Proposition \ref{prop_facets}, the polyhedron $\Pi_+(M)$ is an intersection of halfspaces of the form $\{s_I \geq c\}$, for various nonempty sets $I \subseteq [N]$ and values $c \in \{0, 1, -1\}.$

Suppose that $(T,v)$ is semistable relative to $\shL$. Then, by the preceding discussion, $(T,v)$ satisfies some list of conditions of the form
$$ \Omega(v, \mc{H}) \leq c_0(T, \mc{H})q, $$
where $\mc{H}$ is a flag and $c_0(T, \mc{H}) \in \{0, 1, -1\}$.

We show that the conditions described in the theorem statement appear in this way.

Let $\mc{H}$ be Type I. Let the single element of $\mc{H}$ be $H$. We claim there exists a flag completion $\mc{H}'$ of $\mc{H}$ such that the completed flag pair $(\mc{H}, \mc{H}')$ is of Type (P-1A). Indeed, let $\mc{H}''$ be any flag completion of $\mc{H}$, with some associated basis $B''$. Since $0 \neq TH_1$ or $T(k^{N+1}) \subsetneq H$, there is a permutation $B'$ of the basis $B''$ that fixes $H$, such that the matrix of $T$ is not strictly-upper-triangular. Let $\mc{H}'$ be the flag associated to $B'$. Since $TH_1 \subseteq H_1$, the pair $(\mc{H}, \mc{H}')$ is Type (P-1A). Let $\rho \in [N]$ be the value such that $H = \mc{H}'_\rho$. By Proposition \ref{prop_facets}, in the basis $B'$, there is a facet of $\Pi_+(M)$ with supporting halfspace $\{s_\rho \geq 0 \}$. Thus $s_\rho(z)(-\eta) \geq 0$, so by \eqref{eq_omega_functional}, we have $\Omega(v, H) \leq 0$.

Let $\mc{H}$ be Type II. There exists a flag completion $\mc{H}'$ of $\mc{H}$ such that the completed flag pair $(\mc{H}, \mc{H}')$ is of Type (P-1B). Let $I \subseteq [N]$ be the subset of indices corresponding to the inclusion of $\mc{H}$ in $\mc{H}'$. By Proposition \ref{prop_facets}, writing $M$ for the matrix of $T$ in basis $B$, there is a facet of $\Pi_+(M)$ with supporting halfspace $\{s_I \geq -1 \}$. Thus $s_I(z)(-\eta) \geq -q$, so by \eqref{eq_omega_functional}, we have $\Omega(v, \mc{H}) \leq q$.

Let $\mc{H}$ be Type III. There exists a flag completion $\mc{H}'$ of $\mc{H}$ such that the completed flag pair $(\mc{H}, \mc{H}')$ is of Type (P-2B); namely, let $\mc{H}'$ be any completion of 
$$T^{-1}(0) \subsetneq T^{-1}(H_1) \hdots \subsetneq T^{-1}(H_{\gamma - 1}).$$ Let $I \subseteq [N]$ be the subset of indices corresponding to the inclusion of $\mc{H}$ in $\mc{H}'$. By Proposition \ref{prop_facets}, writing $M$ for the matrix of $T$ in basis $B$, there is a facet of $\Pi_+(M)$ with supporting halfspace $\{ s_I \geq 1 \}$. Thus $\Omega(v, \mc{H}) \leq -q.$

We have proved one direction of the theorem; now we prove the other.

For every complete flag $\mc{H}'$, and every subflag $\mc{H}$ of $\mc{H}'$ such that the completed flag pair $(\mc{H}, \mc{H}')$ is Type (P-1A), (P-1B), (P-2A), or (P-2B), we define a quantity $c(T, \mc{H}, \mc{H}')$ by
\[
c(T, \mc{H}, \mc{H'}) =
\begin{cases}
0, & (\mc{H}, \mc{H'}) \text{ is Type (P-1A) for } T, \\
-1, & (\mc{H}, \mc{H'}) \text{ is Type (P-1B) for } T, \\
0, & (\mc{H}, \mc{H'}) \text{ is Type (P-2A) for } T, \\
1, & (\mc{H}, \mc{H'}) \text{ is Type (P-2B) for } T. \\
\end{cases}
\]
We claim that, if \eqref{eq_flag_solo} holds
for all flags $\mc{H}$ of Type I, II, and III, then for all completed flag pairs $(\mc{H}, \mc{H}')$ of Type (P-1A), (P-1B), (P-2A), or (P-2B),
\begin{equation} \label{eq_flag_pairs}
    \Omega(v, \mc{H}) \leq c(T, \mc{H}, \mc{H}') q.
\end{equation}

Indeed, assuming \eqref{eq_flag_pairs}, by Proposition \ref{prop_facets}, for every basis $B$ of $k^{N+1}$, writing $M$ for the matrix of $T$, we have $-\eta(v) \in q\Pi_+(M)$ by \eqref{eq_omega_functional}. Hence $0 \in \Pi_+(T,v)$ for every basis $B$, proving semistability.

Finally, we check that \eqref{eq_flag_solo} implies \eqref{eq_flag_pairs}:
\begin{itemize}
    \item If $(\mc{H}, \mc{H}')$ is Type (P-1A) or (P-2A), then $\mc{H}$ is Type I and $c(T, \mc{H}, \mc{H}') = c(T, \mc{H})$.
    \item 
If $(\mc{H}, \mc{H}')$ is Type (P-1B), then $\mc{H}$ is Type II and $c(T, \mc{H}, \mc{H'}) = c(T, \mc{H})$.
\item
If $(\mc{H}, \mc{H})'$ is Type (P-2B), then $\mc{H}$ is Type III and $c(T, \mc{H}, \mc{H}') = c(T, \mc{H})$.
\end{itemize}
For stability, the same argument works using strict inequalities.
\end{proof}

Our main theorem follows:
\begin{proof}[Proof of Theorem \ref{thm_main}]
In Theorem \ref{thm_main_any_sheaf}, take $m_1 = \hdots = m_n = 1$.
\end{proof}

Finally, we prove two useful facts about the structure of the quotient: it is nonempty (Corollary \ref{cor_stable_nonempty}) and rational (Theorem \ref{thm_rational}).

\begin{proof}[Proof of Corollary \ref{cor_stable_nonempty}]
  Let $T \neq 1$ be invertible with distinct eigenvalues. Then there exists a point $v$ that is not in any $T$-invariant proper linear subspace. Let $v_1 = \hdots = v_n = v$. Let us check that the stability conditions in Theorem \ref{thm_main} hold. Since $T$ is invertible, it is not nilpotent, so there are no Type III flags relative to $T$. The conditions for the Type II flags are met since $q \geq n$. The Type I flags are simply the invariant linear subspaces for $T$, but these do not contain $v$, so the Type I conditions are all met.
\end{proof}

\begin{proof}[Proof of Theorem \ref{thm_rational}]
 The existence and dimension count follow immediately from Corollary \ref{cor_stable_nonempty}. The claim is independent of $\shL$ because varying the sheaf $\shL$ does not change the birational type of the quotient. Let $\mc{S}_{N+1}$ denote the symmetric group on $N + 1$ letters. Given a generic point $(T, v_1, \hdots, v_n) \in \Twip$, consider the $N + 1$ fixed points of $T$ as an element of $(\PP^N)^n / \mc{S}_{N+1}$. There exists a projective transformation that diagonalizes $T$ to some matrix $\diag[\lambda_1, \hdots, \lambda_{N+1}]$ of eigenvalues and takes $v_1$ to $[1: \hdots : 1]$. For each $i$ in the range $1 < i \leq n$, let $v'_i$ be the image of $v_i$. Since there is no ordering on the eigenvectors, a projective transformation with this property is unique up to the action of $\mc{S}_{N+1}$. Thus the multivalued map 
 $$(T, v_1, \hdots, v_n) \dashrightarrow ([\lambda_1 : \hdots : \lambda_{N+1}], v'_2, \hdots, v'_n)$$
 descends to a well-defined rational map
 $$\Twipq \dashrightarrow (\PP^N)^{n} / \mc{S}_{N+1},$$
 where $\mc{S}_{N+1}$ acts diagonally on each factor by permuting coordinates. Then, to prove rationality, it suffices to show that each factor $\PP^N/ \mc{S}_{N+1}$ is rational. The theorem on symmetric functions states that $\A^{N+1} / \mc{S}_{N+1} \cong \A^{N+1}$, so $\PP^N / \mc{S}_{N+1}$ is a weighted projective space, and these are rational.
\end{proof}

\section{Application: one marked point} \label{sect_one_marked_point}

In this section, we work out a special case of Theorem \ref{thm_main}, described in Example \ref{ex_one_marked_point}. We show that, when there is one marked point, our results specialize to a moduli space of Mumford-Suominen \cite{MR0437531}.

Let the dimension $N \geq 1$, let the number of marked points be $n = 1$, and let $q = 1$. With these choices, Theorem \ref{thm_main} describes stability for linear self-maps of $\PP^N$ marked with a single point, embedded in projective space via the Segre embedding $\PP^{N^2 + 2N} \times \PP^N \hookrightarrow \PP^{(N+1)^3 - 1}$. We show that, with these parameters, the stable quotient is isomorphic to $\PP^N$ (Proposition \ref{prop_quotient_is_PN}). Note that any choice of $q \geq 1$ gives the same result; we choose $q = 1$ for concreteness.

First, we collect the most basic consequences of Theorem \ref{thm_main}. Theorem \ref{thm_main} describes (semi)stability by checking the inequality \eqref{eq_main} for each flag of Type I, II, or III. With these parameters, the right side of \eqref{eq_main} is not an integer, and the left side is an integer, so the inequality \eqref{eq_main} is always strict. Thus
$$\mc{X}_{N,1}(1,1)\gits = \mc{X}_{N,1}(1,1)\gitss.$$
Further, according to Corollary \ref{cor_stable_nonempty} and Theorem \ref{thm_rational}, the stable locus is nonempty and the (semi)stable quotient is a rational projective variety of dimension $N$.

To say more about the structure of the quotient, we need the more detailed description of stability from Theorem \ref{thm_main}. With these parameters, Theorem \ref{thm_main} condenses to a simple linear-algebraic condition.

\begin{lemma} \label{lem_one_marked_point}
A marked linear map $(T,v) \in \mc{X}_{N,1}$ is stable relative to $\shO(1,1)$ if and only if $v, T(v), \hdots, T^N(v)$ form a basis of $k^{N+1}$ and $T^{N+1}(v) \neq 0.$
\end{lemma}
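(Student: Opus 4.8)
The plan is to read the criterion off Theorem~\ref{thm_main} (equivalently the $m_1=1$ case of Theorem~\ref{thm_main_any_sheaf}) with $n=1$ and $q=1$. With a single marked point the count $\#\{i\colon v_i\in H_j\}$ in \eqref{eq_main} is just the indicator of $v\in H_j$, so the theorem says that $(T,v)$ is semistable exactly when, for every flag $\mc{H}=(H_1,\dots,H_\gamma)$ that is of Type~I,~II or~III relative to $T$,
\[
\#\{\,j\in[\gamma]\colon v\in H_j\,\}\ \le\ \frac{1}{N+1}\sum_{j=1}^{\gamma}\dim H_j\ +\ c(T,\mc{H}),\qquad c(T,\mc{H})\in\{0,1,-1\},
\]
with strict inequality characterizing stability; as already observed just before the lemma, the left-hand side is an integer while the right-hand side never is, so the semistable and stable loci agree here. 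Thus the lemma reduces to checking that all these inequalities hold if and only if $v,Tv,\dots,T^Nv$ is a basis and $T^{N+1}v\neq 0$. I would also record the elementary observation that, once $v$ is cyclic, $T^{N+1}v=\sum_{i=0}^N c_iT^iv$ is the companion relation of $T$ in the basis $v,Tv,\dots,T^Nv$, so $T^{N+1}v\neq 0$ is equivalent to $T$ being non-nilpotent.

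For the ``if'' direction I would treat the three flag types in turn. Since a Type~III flag forces $T$ to be nilpotent, there are none when $T$ is non-nilpotent, so those constraints are vacuous. A Type~I flag is a single proper subspace $H$ with $TH\subseteq H$; a cyclic vector lies in no proper $T$-invariant subspace, so $v\notin H$ and the inequality reads $0\le \dim H/(N+1)$, which holds (strictly). That leaves the Type~II flags, which are the heart of the matter and are discussed below.

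For the ``only if'' direction I would argue the contrapositive in two cases. If $v$ is not cyclic, then $W:=\langle v,Tv,T^2v,\dots\rangle$ is a proper nonzero $T$-invariant subspace containing $v$ (its $T$-invariance is Cayley--Hamilton), and I would extract a one-step flag through $v$ of Type~I or~III: if $TW\neq 0$ then $(W)$ is Type~I; if $TW=0$ then $v\in\ker T$ and $(\ker T)$ is Type~I when $T^2\neq 0$ and Type~III when $T^2=0$ (using $T\neq 0$ in $\PE$). In each case $v\in H$ with $\dim H\le N$, so the left side of the displayed inequality equals $1$ while the right side is $\le 0+\tfrac{N}{N+1}<1$, so $(T,v)$ is not even semistable. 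If instead $v$ is cyclic but $T$ is nilpotent, then $T$ is a single Jordan block of size $N+1$, $T^Nv\neq 0$, and the Krylov flag $H_t=\langle v,Tv,\dots,T^{t-1}v\rangle$ $(1\le t\le N)$ is Type~II with $v\in H_t$ for every $t$; plugging it in gives $N\le \tfrac1{N+1}\binom{N+1}{2}+1$ which fails strictly for $N\ge 2$, while for $N=1$ the kernel flag $(\ker T)$ is Type~III with $v\notin\ker T$ and yields $0\le -1+\tfrac12<0$, again a violation. So in all cases $(T,v)$ is not stable.

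The main obstacle is the Type~II case of the ``if'' direction: one must show that every Type~II flag $\mc{H}$ relative to a non-nilpotent $T$ satisfies $\#\{j\colon v\in H_j\}<\tfrac1{N+1}\sum_j\dim H_j+1$ when $v$ is cyclic. The tool to use is the defining property of Type~II flags (Definition~\ref{defn:flaginkN1}): $TH_t\subseteq H_{t+1}$ and $TH_t\not\subseteq H_t$ for $1\le t\le\gamma$. Once $v$ first enters the flag, at some level $j_0$, iterating $TH_t\subseteq H_{t+1}$ gives $T^kv\in H_{j_0+k}$ for all $k$, and cyclicity makes $v,Tv,\dots,T^kv$ linearly independent, forcing $\dim H_{j_0+k}\ge k+1$; likewise $TH_t\not\subseteq H_t$ makes the dimensions strictly increasing, so $\dim H_t\ge t$ throughout. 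The delicate step, and the one I expect to require the real work, is to combine these lower bounds on $\dim H_t$ with the exact count $\#\{j\colon v\in H_j\}=\gamma-j_0+1$ and the normalization $1/(N+1)$ to force the required strict inequality on all Type~II flags (in particular to pin down which Type~II flags can carry a cyclic vector at every level); the Type~I, Type~III, and bookkeeping parts are routine once that is in hand.
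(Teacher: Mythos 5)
Your overall route is the same as the paper's: reduce to Theorem \ref{thm_main}, observe that a Type III flag or a Type I flag through $v$ forces instability, show non-cyclic $v$ produces such a flag, and then claim that Type II flags never obstruct a cyclic, non-nilpotent pair. Your Type I/III bookkeeping and your ``only if'' direction are fine (indeed you treat the corner case $Tv=0$ more carefully than the paper does). The genuine gap is exactly the step you yourself leave open: that every Type II flag satisfies the strict form of \eqref{eq_main} once $v$ is cyclic and $T$ is non-nilpotent. The paper's proof disposes of this in one unjustified sentence (``for any Type II flag, the left side of \eqref{eq_main} is less than the right side''), and the claim does not survive the very computation you already made in the nilpotent case: the Type II property of a Krylov flag uses only cyclicity, never nilpotency. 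For any $T$ with cyclic vector $v$ and $N\ge 2$, the flag $H_1=\Span(v)\subsetneq H_2=\Span(v,Tv)$ has $\Hess_{T,\mc{H}}(1)=2$ and $\Hess_{T,\mc{H}}(2)=3$, hence is Type II, and \eqref{eq_main} for it reads $2\le \frac{3}{N+1}+1$: equality for $N=2$ (so stability fails) and an outright violation for $N\ge 3$ (so even semistability fails). Equivalently, your estimate $N\le N/2+1$ for the full Krylov flag applies verbatim to non-nilpotent $T$. So the strict inequality you hoped to force from the bounds $\dim H_{j_0+k}\ge k+1$ is false, and no amount of bookkeeping will produce it; also note that your repetition of the ``right side is never an integer'' remark fails already for a flag of dimensions $1,2$ with $N=2$.

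This is not an artifact of misreading Theorem \ref{thm_main}: a direct Hilbert--Mumford check gives the same answer. For $N=2$, in the basis $v,Tv,T^2v$ the one-parameter subgroup $\diag(\tau,1,\tau^{-1})$ acts on every nonzero Segre coordinate of $(T,v)$ with weight $\ge 0$, and weight $0$ occurs (the subdiagonal entries paired with the marked point), so $\mu=0$ and $(T,v)$ is strictly semistable, not stable; for $N=3$ the descending subgroup with $r=(5,2,-2,-5)$ makes all such weights positive, so every cyclic pair is unstable for $\shO(1,1)$. Thus the missing Type II step is not merely ``the real work'': it is the point at which the statement itself is incompatible with Theorem \ref{thm_main} for $N\ge 2$ (the paper's one-line assertion is the flaw in its own proof), and your plan can only be completed in the case $N=1$, or after changing the linearization so that $q$ is large enough to absorb the Type II flags as in Example \ref{ex_large_q}.
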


\begin{proof}
First, observe that if $T$ has any Type III flags, then inspecting \eqref{eq_main}, the pair $(T,v)$ is unstable. Every nilpotent map admits a Type III flag, so we conclude that if $T$ is nilpotent, then $(T,v)$ is unstable. Second, if $v$ lies on a Type I flag of $T$, then $(T,v)$ is unstable.  The Type II flags impose no further conditions because, for any Type II flag, the left side of \eqref{eq_main} is less than the right side of \eqref{eq_main}. So by Theorem \ref{thm_main}, we have that $(T,v)$ is stable if and only if $T$ is non-nilpotent and $v$ lies on no Type I flag for $T$.

Now we claim that, if $(T,v)$ is stable, then the points $v, T(v), T^2(v), \hdots, T^{N-1}(v), T^N(v)$ are a basis of $k^{N+1}$. Indeed, if not, then there is a $T$-invariant proper subspace containing $v$, thus a Type I flag containing $v$, given by
$$\Span \bigl( T^m (v): 0 \leq m \leq N - 1 \bigr).$$

Conversely, suppose that a pair $(T,v)$ satisfies the condition that $v, \hdots, T^N(v)$ form a basis of $k^{N+1}$. Then the only $T$-invariant subspace containing $v$ is the whole space $k^{N+1}$, and thus $v$ is not contained in any Type I flag. Note further that, for any such $(T,v)$, nilpotence is equivalent to $T^{N+1}(v) = 0.$
\end{proof}

The simple characterization of stability in Lemma \ref{lem_one_marked_point} allows us to explicitly describe the structure of the stable quotient.

\begin{prop} \label{prop_quotient_is_PN}
The stable quotient is isomorphic to a weighted projective space
$$ (\mc{M}_{1,1}^N(1,1))\gits \cong \PP(1,2,\ldots,N+1).$$
\end{prop}
\begin{proof}
We exhibit an explicit isomorphism $\alpha: (\mc{M}_{1,1}^N(1,1))\gits \to \PP(1,2,\ldots,N+1)$. Let $(T,v)$ be a stable marked linear map relative to $\shO(1,1).$
Let $\hat{T} \in \Mat_{N+1}$ be an affine representative of $T$. Since $\hat{T}$ is non-nilpotent (Lemma \ref{lem_one_marked_point}), its characteristic polynomial is of the form
$\hat{T}^{N+1}+a_N\hat{T}^N+\ldots+a_0$, where not all the $a_i$ are $0$. For any $b\in k^*$, the characteristic polynomial of $\frac{1}{b}\hat{T}$ is 
$$\hat{T}^{N+1}+ba_N\hat{T}^N+\ldots+b^{N+1}a_0.$$
Let $\alpha(T,v)$ be the class of $(a_N,\ldots,a_0)$ up to the scaling action of $b$; it does not depend on the choice of representative $\hat{T}$ or on $v$.

Now we define an inverse of $\alpha$. Send the scaling class of $(a_N,\ldots,a_0)$ to the class of $(T,v)$, where $v=[1:0:\ldots:0]$ and
\begin{equation}
T=\begin{bmatrix}
0 &     &       & & -a_0 \\
1 &  0  &       & & -a_1 \\
  &  1  & \ddots & & \vdots \\
  &     & \ddots & 0 &    -a_N     \\
  &     &        & 1      & -a_{N+1} \\
\end{bmatrix}.
\label{eq_lift}
\end{equation}

This is well-defined as a map from $\PP(1,\ldots,N+1)$ since, for any $b \in k^*$, 
\[
\begin{bmatrix}
0 &     &       & & -a_0 b^{N+1} \\
1 &  0  &       & & -a_1 b^N\\
  &  1  & \ddots & & \vdots \\
  &     & \ddots & 0 &    -a_{N-1} b^2     \\
  &     &        & 1      & -a_{N} b \\
\end{bmatrix}
\]
is conjugate to $T$ by $\diag(b^{N},\ldots,b,1)$, which fixes $v$. Since $\PP(1,\ldots,N+1)$ does not contain representatives of $(0,\ldots,0)$, the map $T$ is non-nilpotent, and $v$ is cyclic, so $(T,v)$ is stable (Lemma \ref{lem_one_marked_point}). There is a lift $\hat{T}$ of matrix $T$ with characteristic polynomial $\hat{T}^{N+1}+a_N\hat{T}^N+\ldots+a_0$, by interpreting \eqref{eq_lift} as an affine matrix, so we have an inverse of $\alpha$.
\end{proof}

\begin{remark}
A vector $v$ such that $v, T(v), \hdots, T^N(v)$ forms a basis of $k^{N+1}$ is called a \emph{cyclic vector} of $T$. Thus $\mc{M}_{1,1}^N$ may be described as the moduli space of non-nilpotent linear maps on projective space equipped with a cyclic vector. Mumford and Suominen studied this moduli space, or rather, its affine version, in \cite{MR0437531}. They observe that the stable locus for the moduli problem of unmarked linear maps is empty, but that this deficiency can be addressed by including the data of a cyclic vector. Lemma \ref{lem_one_marked_point} shows that this choice of extra data arises as a natural consequence of the stability theory of marked linear maps.
\end{remark}
\bibliographystyle{amsalpha}
\bibliography{bib}
\end{document}